\newtheorem{thm}{Theorem}[section]
\newtheorem{lem}[thm]{Lemma}
\newtheorem{prop}[thm]{Proposition}
\newtheorem{cor}[thm]{Corollary}
\theoremstyle{remark}
\newtheorem{rem}[thm]{Remark}
\theoremstyle{definition}
\newtheorem{defn}[thm]{Definition}
\newtheoremstyle{Claim}{}{}{\itshape}{}{\itshape\bfseries}{:}{ }{#1}
\theoremstyle{Claim}
\newcommand{\T}{{\mathbb{T}^N}}
\renewcommand{\H}{\mathcal{H}}
\newcommand{\Rset}{\mathbb{R}}
\newcommand{\RsetN}{\mathbb{R}^N}
\newcommand\Kcal{{\mathcal{K}}}
\newcommand\Ecal{{\mathcal{E}}}
\DeclareMathOperator{\diverg}{div}
\title{Time-dependent focusing Mean-Field Games: \\ the sub-critical case}
\author{Marco Cirant and Daniela Tonon}
\date{\today}
\begin{document}

\maketitle

\begin{abstract} We consider time-dependent viscous Mean-Field Games systems in the case of local, decreasing and unbounded coupling. These systems arise in mean-field game theory, and describe Nash equilibria of games with a large number of agents aiming at aggregation. We prove the existence of weak solutions that are minimisers of an associated non-convex functional, by rephrasing the problem in a convex framework. Under additional assumptions involving the growth at infinity of the coupling, the Hamiltonian, and the space dimension, we show that such minimisers are indeed classical solutions by a blow-up argument and additional Sobolev regularity for the Fokker-Planck equation. We exhibit an example of non-uniqueness of solutions. Finally, by means of a contraction principle, we observe that classical solutions exist just by local regularity of the coupling if the time horizon is short.
\end{abstract}

\noindent
{\footnotesize \textbf{AMS-Subject Classification}}. {\footnotesize 35K55, 49N70.}\\
{\footnotesize \textbf{Keywords}}. {\footnotesize Variational formulation of Mean Field Games, local decreasing coupling, non-uniqueness.}

\section{Introduction}

Mean Field Games (MFG) theory models the behavior of an infinite number of indistinguishable rational agents aiming at minimising a common cost. The theory  was introduced in the seminal papers by Lasry and Lions \cite{LL061, LL062, LL07, Lcol} and by Huang, Caines and Malham\'e \cite{HCM06} to describe Nash equilibria in differential games with infinitely many players. A large part of  MFG literature is devoted to the study of MFG systems with increasing coupling. Heuristically, this assumption means that agents prefer sparsely populated areas (indeed concentration costs), and it is well-suited to model competitive cases. The increasing monotonicity of the coupling ensures existence and regularity of solutions in many circumstances (see, e.g., \cite{GoBook} and references therein); it is also a key assumption if one looks for uniqueness of equilibria. Although those models have a wide range of applications, they  rule out the possibility to apply the MFG theory to analyse aggregation phenomena, that is when agents aim at converging to a common state.  To cite an example, in \cite{Gue09}, Gu\'eant considered simple population models where  individuals have preferences about resembling to each other. Very few results exist in this direction and they only deal with very particular cases. See  \cite{Go16, gueant12} and \cite{BarPri}, for the quadratic and linear-quadratic case.
Our goal is to better understand this class of ``focusing'' MFG systems, where the coupling is monotone decreasing and it is a local function of the distribution, so that no regularising effect can be expected. Actually, non-existence, non-uniqueness of solutions, non-smoothness, and concentration are likely to arise, as shown by the first author in \cite{cir16}, where the stationary focusing case is considered.  In that paper, it is proven that there exists a threshold for the growth of the coupling, after which solutions to the MFG system may not even exist. Indeed, the focusing character of the MFG induce solutions to concentrate and develop singularities.

Let us enter into  the details of the  kind of MFG systems we deal with in this paper. In order to avoid boundary issues and exploit the compactness of the state space, we set our problem on the $N$-dimensional flat torus $\T$. Let $Q = Q_T = \T \times (0, T)$.  We consider MFG systems of the form
\begin{equation}\label{MFG}
\begin{cases}
-u_t - \Delta u + H(\nabla u) = -f(x,m(x,t)),& \text{in $Q$,} \\
m_t - \Delta m - \diverg(\nabla H(\nabla u)\, m)  = 0 & \text{in $Q$,} \\
m(x,0) = m_0(x), \quad u(x,T) = u_T(x) & \text{on $\T$}.
\end{cases}
\end{equation}
where $\int_\T m_0 \, dx =1$, $m_0>0$, $m_0, u_T \in C^2(\T)$.  In the system above, the first is the Hamilton-Jacobi-Bellman equation for the value function $u$ of a single agent, the second is the Kolmogorov-Fokker-Planck equation that governs the evolution of the distribution of the population $m$.

 Even more than in the competitive case, the assumptions on the Hamiltonian $H$, the growth of the coupling $f$ and the dimension of the state space will affect the qualitative behavior of the system. Let us clearly state the assumptions we make throughout the article.

The Hamiltonian $H : \RsetN \to \Rset$ is convex, $C^3(\RsetN)$, and has a superlinear growth: there exist $\gamma > 1$, $C_H > 0$ such that
\begin{equation}\label{Hass}
 C_H^{-1}|p|^\gamma \le H(p) \le C_H(|p|^{\gamma} + 1), \\
\end{equation}
for all $p \in \RsetN$. Its Legendre transform, $L(q) := \sup_{p \in \RsetN} \{p \cdot q - H(p)\}$ satisfies for some $C_L > 0$,
\begin{equation}\label{Lass}
 C_L^{-1}|q|^{\gamma'} - C_L \le L(q) \le C_L(|q|^{\gamma'} + 1) \\
\end{equation}
for all $q \in \RsetN$, where $\gamma'$ is the conjugate exponent of $\gamma$, i.e. $\frac 1 {\gamma}+\frac 1 {\gamma'} =1$.

The local coupling $f:\T\times[0,+\infty)\to\Rset$, $f \ge 0$ is continuous in both variables, differentiable w.r.t. the second variable  and satisfies, for $\alpha>0$,
\begin{equation}\label{fass}
 |\partial_m f(x,m)| \le c_f(m + 1)^{\alpha-1}
\end{equation}
for all $(x, m)\in \T\times[0,+\infty)$. Note that \eqref{fass} implies
\[
0 \le f(x, m) \le \frac{c_f}{\alpha} (m+1)^\alpha - \frac{c_f}{\alpha} + f(0) \le C_f (m^\alpha + 1),
\]
for all $(x, m)\in \T\times[0,+\infty)$, but in general $f$ is not bounded by above. Actually, what is important here is to have a control of the behaviour of $f$ at infinity, rather than requiring a restriction on the monotonicity of $f$. Note that $f$ could also depend explicitly on time, without giving any additional difficulty.
Let
\[
F(x,m) = \int_{0}^m f(x, \sigma) d \sigma \quad \forall (x, m)\in \T\times[0,+\infty),\quad F(x,m) =+ \infty \quad \text{otherwise,} 
\]
we then have, for all $m \ge 0$ and a $ C_F>0$,
\begin{equation}\label{Fass}
0 \le F(x, m) \le  C_F (m^{\alpha+1} + 1).
\end{equation}
  

Before stating our results, let us have a  look at the ones obtained in \cite{cir16}  for the stationary case.  In this setting, scaling properties of the system and regularity of the Kolmogorov equation can be exploited to prove that:
 if $\alpha < {\gamma'}/{N}$, there exists a classical solution basically by means of a control of the ``energy'' associated to the system. In other words, the decaying of the coupling is well compensated by the regularising properties of the diffusion.
 If ${\gamma'}/{N}\leq \alpha< {\gamma'}/{(N-\gamma')}$, such a control turns out to be more delicate, as aggregation may become the leading effect. Therefore, existence of classical solutions can be obtained only under additional assumptions on the coupling. In both cases, existence is shown by a blow-up method, together with Schauder's fixed point theorem. If $\alpha>  {\gamma'}/{(N-\gamma')}$, it is shown that classical solutions may not exist, i.e. concentration due to the fast decay of the coupling cannot be compensated by the diffusion.

In the evolutionary case, we expect a similar behaviour. 
Indeed, one of the tools that turns out to be fundamental for understanding a MFG system of the form \eqref{MFG} is its variational formulation: let $\Kcal \subset L^{\alpha+1}(Q) \times L^1(Q)$ be the pairs $(m,w)$ satisfying $|w|^{\gamma'}m^{1-\gamma'} \in L^1(Q)$, $m \ge 0$ and
\begin{equation}\label{kcalconstraint}
\int_Q (m \varphi_t + w\cdot \nabla \varphi - \nabla m \cdot \nabla \varphi ) dx dt+  \int_{\T}m_0(x) \varphi(x,0) dx = 0\quad \forall \varphi \in C_0^\infty(\T \times [0,T)),
\end{equation}
then the energy functional  $\Ecal$ associated with \eqref{MFG} is defined on $\Kcal$ as\footnote{The term $mL(-w/m)$ has to be intended equal to zero if $(m,w) = (0,0)$ and $+\infty$ if $w \neq 0$ and $m \le 0$.}
\begin{equation}\label{defE}
\Ecal(m, w) := \int_Q m L\left(-\frac{w}{m}\right) - F(x,m) \, dx dt + \int_\T u_T(x) m(x,T) \, dx.
\end{equation}
This functional shares many features with its stationary analogue, associated with the stationary problems considered in \cite{cir16}. On the other hand, some useful scaling properties are missing in the parabolic case by the presence of time.
Note that, due to the behaviour of  $f$ at infinity, $\Ecal$ is not convex and it may not be bounded from below in general. Hence, the usual variational methods that link minimisers of this functional to solutions of the MFG system cannot be used in general for this case. However, if we restrict to the regime $\alpha < {\gamma'}/{N}$, the energy
  $\mathcal {E}$ becomes bounded  from below; this key fact is proven in Lemma \ref{Eboundbelow}. Let us comment upon this last  assumption.
  In the stationary case,  using the rescaling properties of the Kolmogorov equation (that acts as a constraint), it can be shown that  it is a necessary and sufficient condition for the stationary version of the energy functional $\mathcal {E}$ to be bounded from below. In particular, if $\alpha \ge {\gamma'}/{N}$, by scaling a test competitor one observes that the term $-\int F(x,m)$ prevails on $\int m L(-{w}/{m})$, making the energy unbounded. In the evolutionary case, however, such a procedure does not apply directly, because any rescaling in the space variable changes the initial datum and jeopardises the constraint $\Kcal$. Nevertheless, this hypothesis is crucial in Proposition \ref{GN2},  where we prove estimates for a superlinear power of the  term $$\int_Q m^{\alpha+1} \, dxdt .$$ in terms of $\int m L(-{w}/{m})$. Such an estimate is based on the Gagliardo-Nirenberg inequality and elliptic regularity applied to the Fokker-Planck equation.

In  what follows we will always suppose
\[
\alpha < \frac{\gamma'}{N},
\]
so that $\Ecal$ is bounded by below on $\Kcal$: for this reason, we will call this regime {\it sub-critical}. Under this assumption, it can be shown through a convexification procedure that the energy functional $\mathcal {E}$ possess a minimum that can be linked to a weak solution (in the sense of Definition \ref{def:weaksolMFG}) of the MFG system. The idea is that variational techniques similar to the ones presented by the second author et alt. in \cite{CGPT}, can be applied to prove the existence of a weak solution to the convexified MFG system. Then it is enough to prove that the solution of the convexified problem provides a solution for the original one.  

When $\gamma'  > N+2$, thanks to Corollary \ref{comp}, more regularity can be proven for weak solutions, that are indeed classical ones.  

When $2 <\gamma' \le N+2$, we are able to prove the existence of classical solutions only under the additional hypothesis 
 \[
\alpha < \min\left \{ \frac{\gamma'}{N}, \frac{\gamma'-2}{N+2-\gamma'}\right\}.
\]
In this case a penalisation argument allows to obtain a sequence of solutions applying Theorem \ref{ex_clas}. Then a series of a-priori estimates will show the convergence of the sequence to a classical solution of the original problem. These estimates  rely on some blow-up techniques for which the hypothesis on $\alpha$ and the requirement $\gamma'>2$ (i.e. $H$ is subquadratic) are necessary.

The main theorems are then the following

\begin{thm}\label{ex_clas} Suppose that \eqref{Hass}, \eqref{fass} hold, and $\gamma'  > N+2$. Then, there exists a classical solution $(u, m)$ of \eqref{MFG} such that $(m , -m\nabla H(\nabla u))$ is a minimiser of $\Ecal$.
\end{thm}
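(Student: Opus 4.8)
The plan is to exploit the variational structure: build a minimiser of $\Ecal$ over $\Kcal$ by the direct method, then bootstrap regularity of the minimiser until it is smooth enough to be a classical solution of \eqref{MFG}, using the hypothesis $\gamma' > N+2$ to close the regularity loop. First I would invoke the fact, announced as Lemma \ref{Eboundbelow}, that in the sub-critical regime $\alpha < \gamma'/N$ the functional $\Ecal$ is bounded below on $\Kcal$; since $\Kcal$ is nonempty (the heat flow with drift $0$ from $m_0$, paired with $w = \nabla m$, gives an admissible competitor, or simply take $m$ solving $m_t - \Delta m = 0$), a minimising sequence exists. The functional $(m,w)\mapsto \int_Q mL(-w/m)$ is convex, lower semicontinuous, and coercive on $\Kcal$ thanks to the growth \eqref{Lass} on $L$ — it controls $\int_Q |w|^{\gamma'} m^{1-\gamma'}$ and, together with the constraint \eqref{kcalconstraint}, an a priori bound on $\nabla m$ in a suitable space — while $-\int_Q F(x,m)$, though concave, is controlled by the sub-critical Gagliardo–Nirenberg estimate of Proposition \ref{GN2} which bounds a superlinear power of $\int_Q m^{\alpha+1}$ by the $L$-term; this is exactly what forces the sub-linear competition between the two terms and yields coercivity of the sum. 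Extracting a weakly convergent subsequence $(m_n, w_n)\rightharpoonup (m,w)$ and passing to the limit (weak lower semicontinuity for the convex part, strong $L^{\alpha+1}$ compactness via the parabolic embedding for the concave part, continuity of the terminal term $\int_\T u_T m(\cdot,T)$ using a trace bound) produces a minimiser $(m,w)$.

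Next I would derive the Euler–Lagrange system. Because $\Ecal$ is \emph{not} convex, one cannot directly dualise; the strategy advertised in the introduction is to convexify: replace $-F(x,\cdot)$ by (minus) its convex envelope, or equivalently pass through the convexified functional à la \cite{CGPT}, obtain a weak solution $(u,m)$ of the convexified MFG system by the duality/minimax argument of that paper, and then argue that at the minimiser the convexified and original problems coincide — i.e. the minimiser never sees the region where the envelope differs from $F$ — so that $(u,m)$ solves \eqref{MFG} weakly in the sense of Definition \ref{def:weaksolMFG}, with $w = -m\,\nabla H(\nabla u)$ and $(m, w)$ minimising $\Ecal$. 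This gives the weak solution; the remaining task is regularity.

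For the bootstrap, the leverage is $\gamma' > N+2$, i.e. $\gamma < (N+2)/(N+1)$, so $H$ is only mildly super-linear and $\nabla u$ is, after the first estimates, bounded. Concretely: from the weak formulation and $mL(-w/m) \in L^1$ one gets $m \in L^\infty(0,T; L^1) \cap$ (some parabolic Sobolev space), hence $m^\alpha$, and thus $f(x,m)$, lies in $L^q(Q)$ for $q$ large because $\alpha < \gamma'/N$ is a strong restriction when $\gamma'$ is large; parabolic $L^p$ regularity (Corollary \ref{comp}) applied to the HJB equation with right-hand side $-f(x,m) \in L^q$ gives $u \in W^{2,1}_q(Q)$, and the Sobolev embedding $W^{2,1}_q \hookrightarrow C^{1,\beta}$ kicks in precisely because $q$ can be taken larger than $N+2$ — here $\gamma'>N+2$ is what lets the iteration start and not stall. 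With $\nabla u \in C^\beta$, the drift $\nabla H(\nabla u)$ is Hölder, so Schauder theory for the Fokker–Planck equation gives $m \in C^{2,\beta}$ (using also $m_0 \in C^2$, $m_0>0$ and a maximum-principle lower bound $m \geq \delta > 0$ to keep the equation uniformly parabolic and the term $mL(-w/m)$ honest), then back to the HJB equation to get $u \in C^{2,\beta}$, closing the loop.

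The main obstacle, I expect, is not the abstract minimisation but the passage from ``minimiser / weak solution'' to ``classical solution'': one must (i) justify the convexification step — that the minimiser of $\Ecal$ genuinely yields a solution of the \emph{original} system and not merely the convexified one, which requires showing the relaxation is exact along the optimum — and (ii) run the regularity bootstrap while controlling the degeneracy of the Fokker–Planck equation where $m$ could vanish; the lower bound $m \geq \delta>0$ (via the strong maximum principle, once the drift is known bounded) is the linchpin that makes $mL(-w/m)$ and the term $\diverg(\nabla H(\nabla u)m)$ well behaved. The condition $\gamma'>N+2$ is used exactly to guarantee that the first application of parabolic $L^p$-estimates already lands in a Hölder space, after which everything is classical.
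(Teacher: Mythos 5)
Your global outline --- minimise $\Ecal$ over $\Kcal$ by the direct method, convexify to extract the Euler--Lagrange system, then use $\gamma'>N+2$ for regularity --- is the paper's, and the coercivity argument via Lemma~\ref{Eboundbelow} and Proposition~\ref{GN2} is correct. But the two nontrivial steps are each replaced in the paper by a different mechanism, and your versions of both have gaps. On convexification: you propose to replace $-F$ by its convex envelope and then show ``the minimiser never sees the region where the envelope differs from $F$''. The convex envelope is $\leq -F$, so this would lower the energy, and there is no reason the minimiser of the relaxed energy sits in the set where envelope and $-F$ agree; this would have to be proved, and it is not obvious. The paper instead \emph{raises} the energy: it first extracts the strong limit $\bar m$ of a minimising sequence (by Lemma~\ref{min_seq_lem}, in $C^{0,\theta}(Q)$ since $\gamma'>N+2$), then builds $G(x,t,m)\geq 0$ with $G(x,t,\bar m(x,t))=\partial_mG(x,t,\bar m(x,t))=0$ and $-F+G$ strictly convex (using \eqref{fass}). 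Then $\overline\Ecal=\Ecal+\int G\geq\Ecal$ with equality at $(\bar m,\bar w)$, so $(\bar m,\bar w)$ minimises $\overline\Ecal$; strict convexity identifies it with the CGPT-minimiser $\tilde m$ of the convex problem, and the weak solution of \eqref{MFGbar} therefore solves \eqref{MFG}. Centering the convexification at the already-found limit $\bar m$ is the essential trick your envelope picture misses.

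On regularity: your bootstrap applies parabolic $L^p$ estimates to the HJB equation with right-hand side $-f(x,m)\in L^q$ to get $u\in W^{2,1}_q$, but the HJB is quasi-linear with the term $H(\nabla u)$, and you have no a priori bound on $\|\nabla u\|_{L^{q\gamma}}$ to start this iteration (only $\nabla u\in L^\gamma(Q)$ from the weak formulation). The paper sidesteps this: when $\gamma'>N+2$, Proposition~\ref{Calphaest} and the embedding $\H^{r,1}(Q)\hookrightarrow C^{0,\nu}([0,T],C^{0,\theta}(\T))$ for $r>N+2$ give $\tilde m\in C^{0,\theta}(Q)$ directly, hence $\tilde\beta=(-f+g)(\cdot,\cdot,\tilde m)$ is Hölder in $(x,t)$; one then constructs the \emph{classical} solution $u_1$ of the HJB with Hölder right-hand side $\tilde\beta$ and final datum $u_T$, and verifies by comparison in the dual problem \eqref{dualpb} that $(u_1,\tilde\beta)$ is still a minimiser, so $(u_1,\tilde m)$ remains a solution. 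Schauder theory on the linear Fokker--Planck equation with the Hölder drift $\nabla H(\nabla u_1)$ then gives $\tilde m\in C^{2,\theta}$; no iterative loop is needed. Finally, the lower bound $m\geq\delta>0$ you invoke is not an ingredient of the regularity argument --- the Fokker--Planck equation is linear in $m$, so Schauder applies regardless of where $m$ is small; positivity of $m$ is an a posteriori consequence of the maximum principle, not a prerequisite.
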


\begin{thm}\label{ex_weak} Suppose that \eqref{Hass}, \eqref{fass} hold, and $1< \gamma'  \le N+2$. Then, there exists a weak solution $(u, m)$ of \eqref{MFG} such that $(m , -m\nabla H(\nabla u))$ is a minimiser of $\Ecal$.
\end{thm}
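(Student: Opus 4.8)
The plan is to proceed via a convexification argument combined with a duality/variational approach, in the spirit of \cite{CGPT}. First I would introduce the convexified problem: since $\Ecal$ fails to be convex because of the concave contribution $-\int_Q F(x,m)$ (recall $f$ is decreasing so $F$ is concave in $m$), I would replace $F$ by a suitable convex modification, or rather exploit that on the constraint set $\Kcal$ the relevant competitors have controlled $L^{\alpha+1}$ norm. The cleanest route is to define the relaxed functional over the convex set $\Kcal$ and show, using Lemma \ref{Eboundbelow} (valid precisely because $\alpha < \gamma'/N$), that $\Ecal$ is bounded below on $\Kcal$ and that minimising sequences are precompact in the appropriate weak topology: the bound on $\int_Q mL(-w/m)$ together with Proposition \ref{GN2} gives a uniform bound on $\int_Q m^{\alpha+1}$, hence weak compactness of $m$ in $L^{\alpha+1}(Q)$ and of $w$ in $L^1(Q)$ (with the singular term $|w|^{\gamma'}m^{1-\gamma'}$ handled by convexity/lower semicontinuity of $(m,w)\mapsto mL(-w/m)$).

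Next I would establish existence of a minimiser $(\bar m, \bar w)$ of $\Ecal$ over $\Kcal$: lower semicontinuity of $\int_Q mL(-w/m)$ follows from convexity and Ioffe-type theorems; continuity of the coupling term $-\int_Q F(x,m) + \int_\T u_T m(\cdot,T)$ along the minimising sequence requires a bit of care, since $m\mapsto \int_Q F(x,m)$ is only upper semicontinuous for the weak topology in general — here the uniform higher integrability from Proposition \ref{GN2} (strictly above the exponent $\alpha+1$ thanks to the Gagliardo-Nirenberg estimate yielding a superlinear power of $\int_Q m^{\alpha+1}$) upgrades weak convergence to strong convergence in $L^{\alpha+1}(Q)$, which makes the $F$-term continuous. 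The trace term $\int_\T u_T m(\cdot,T)$ is handled by the parabolic regularity / trace estimates available for solutions of the Fokker-Planck constraint with the gained integrability.

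Then I would pass from the minimiser to a weak solution of \eqref{MFG}. This is the duality step: one writes the minimisation of $\Ecal$ on $\Kcal$ as the dual of a convex problem involving the Hamilton-Jacobi-Bellman operator, exactly as in \cite{CGPT}; the optimality conditions produce a function $u$ such that $w = -m\nabla H(\nabla u)$ and the pair $(u,m)$ satisfies the HJB and Fokker-Planck equations in the weak sense of Definition \ref{def:weaksolMFG}. Concretely, I would regularise $F$ (mollify and truncate) to get a strictly convex, smooth problem, apply the variational existence result for the convexified system, obtain approximate solutions $(u_n, m_n)$, derive uniform a priori bounds (energy bound, the $L^{\alpha+1+\varepsilon}$ bound on $m_n$ from Proposition \ref{GN2}, and the consequent bounds on $\nabla u_n$ in the Lebesgue space dual to the growth of $H$), and pass to the limit; the strong $L^{\alpha+1}$ convergence of $m_n$ ensures the nonlinear coupling term $f(x,m_n)$ converges, and the convexity of $H$ plus the energy identity lets one identify the limiting momentum as $-\bar m\nabla H(\nabla \bar u)$.

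The main obstacle I anticipate is precisely the identification of the minimiser with a weak solution — namely showing that the limiting $(u,m)$ genuinely solves the original, non-convexified system rather than the convexified one, and that no ``hidden'' measure concentration is lost in the relaxation. This requires that the convexification is inactive at the minimiser, i.e. that the minimiser of the relaxed functional is automatically a minimiser of $\Ecal$ itself; this follows because on the set where the energy is finite the competitors have enough integrability ($m\in L^{\alpha+1}$, and more) for the concave term $-\int F$ to be well-defined and for the relaxation not to decrease the infimum — but making this rigorous, together with the passage-to-the-limit in the HJB equation where only weak gradient bounds are available (no $\gamma'>2$ subquadraticity is assumed here, in contrast to Theorem \ref{ex_clas}'s regime), is the technical heart of the argument. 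Unlike the classical-solution case, we do not attempt any blow-up/bootstrap regularity, so the weak formulation of Definition \ref{def:weaksolMFG} must be robust enough to accommodate the limited regularity of $u$ and the possible lack of continuity of $m$.
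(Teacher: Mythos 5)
Your overall plan (convexify, invoke the variational theory of \cite{CGPT} for the convex system, then argue that the relaxed minimiser coincides with a minimiser of $\Ecal$) is the right framework, and you correctly identify that the crux is making the convexification ``inactive'' at the minimiser. But the concrete mechanism you propose --- mollify and truncate $F$ into a strictly convex smooth approximant $F_n$, solve, and pass to the limit --- does not close, and you acknowledge as much when you call the identification step ``the technical heart'' without resolving it. There is no obvious reason why the minimisers of the mollified/convexified problems should converge to a minimiser of the original non-convex $\Ecal$, nor why the limiting coupling should reduce to $-f$: a generic convexification changes the Euler--Lagrange condition and the value of the infimum.

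The paper's trick, which you are missing, is to reverse the order of operations. One first obtains a minimiser $(\bar m,\bar w)$ of the \emph{original} $\Ecal$ on $\Kcal$ by the direct method: Lemma~\ref{Eboundbelow} bounds $\Ecal$ below, Proposition~\ref{GN2} together with the Fokker--Planck regularity (Proposition~\ref{Calphaest}, Corollary~\ref{comp}, via Aubin--Lions compactness in $\H^{r,1}(Q)$) gives a minimising sequence $m^n\to\bar m$ \emph{strongly} in $L^{\alpha+1}(Q)$, so the non-convex term $-\int_Q F(x,m^n)$ passes to the limit by dominated convergence, and lower semicontinuity of $\int mL(-w/m)$ does the rest (Lemma~\ref{min_seq_lem}). \emph{Then}, and only then, one defines the correction term $G(x,t,m)$ in \eqref{defE}--\eqref{MFGbar} so that it is convex in $m$, $G\ge 0$, and --- this is the whole point --- $G(x,t,\bar m(x,t))=0$ and $\partial_m G(x,t,\bar m(x,t))=0$. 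Thus $\overline{\Ecal}=\Ecal+\int G$ is strictly convex, satisfies $\overline{\Ecal}\ge\Ecal$, and touches $\Ecal$ at $(\bar m,\bar w)$; the resulting inequality
\[
\overline{\Ecal}(\tilde m,\tilde w)=\inf_\Kcal\overline{\Ecal}\;\ge\;\inf_\Kcal\Ecal=\Ecal(\bar m,\bar w)=\overline{\Ecal}(\bar m,\bar w)
\]
forces $\tilde m=\bar m$, $\tilde w=\bar w$ by strict convexity, and since $g(x,t,\bar m(x,t))\equiv 0$ the weak solution of the convexified system is automatically a weak solution of the original one. This is a pointwise tangency construction, not an approximation-and-stability argument, and it is what makes the identification step a two-line computation rather than an open obstacle. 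You should also note that since you are in the regime $\gamma'\le N+2$, the auxiliary datum $\bar m$ is not in $L^\infty(Q)$, so the CGPT framework does not apply directly to $\overline{\Ecal}$; the paper has to truncate $\bar m$ at level $M$, solve the $M$-truncated problems, and prove a separate $\Gamma$-type stability result (Proposition~\ref{mfgbarex}, Steps 1.1--1.3) --- a second nontrivial ingredient that your sketch does not anticipate.
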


\begin{thm}\label{ex_clas_2} Suppose that \eqref{Hass}, \eqref{fass} hold, $2 <\gamma' \le N+2$ and \[
\alpha < \min\left \{ \frac{\gamma'}{N}, \frac{\gamma'-2}{N+2-\gamma'}\right\}.
\]
Then, there exists a classical solution $(u, m)$ of \eqref{MFG} such that $(m , -m\nabla H(\nabla u))$ is a minimiser of $\Ecal$.
\end{thm}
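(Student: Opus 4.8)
The plan is to construct $(u,m)$ as a limit of classical solutions of regularised systems, controlling the limit by the energy estimates already available together with a blow-up analysis. First I would introduce a penalised coupling $f_\varepsilon$ (for instance a truncation making it bounded, chosen so that the regularised data fall within the scope of Theorem \ref{ex_clas}), with primitive $F_\varepsilon$ and associated energy $\mathcal{E}_\varepsilon$. Theorem \ref{ex_clas} then yields, for each $\varepsilon$, a classical solution $(u_\varepsilon, m_\varepsilon)$ of the penalised system such that $(m_\varepsilon, -m_\varepsilon\nabla H(\nabla u_\varepsilon))$ minimises $\mathcal{E}_\varepsilon$. Comparing $\mathcal{E}_\varepsilon(m_\varepsilon, w_\varepsilon)$ with a fixed smooth competitor, and using $\alpha < \gamma'/N$ together with the lower bound of Lemma \ref{Eboundbelow} (and its penalised analogue) and the estimate of Proposition \ref{GN2}, I obtain $\varepsilon$-uniform bounds on $\int_Q m_\varepsilon L(-w_\varepsilon/m_\varepsilon)\,dxdt$, on $\|m_\varepsilon\|_{L^{\alpha+1}(Q)}$, and hence, via Fokker–Planck regularity, on the relevant parabolic Sobolev norms of $m_\varepsilon$.

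The core of the proof is a uniform $L^\infty$ bound on $m_\varepsilon$ — equivalently on $f_\varepsilon(x,m_\varepsilon)$, and consequently on $\nabla u_\varepsilon$ — proven by contradiction through a blow-up argument. Assuming $\|m_\varepsilon\|_{L^\infty(Q)}\to\infty$, I would pick points $(x_\varepsilon,t_\varepsilon)$ realising, up to a normalisation, the maximum of $m_\varepsilon$, and rescale $u_\varepsilon$ and $m_\varepsilon$ parabolically around them, with exponents dictated by the homogeneities of the Hamilton–Jacobi and Fokker–Planck equations and by the growth exponents $\gamma$ and $\alpha$; here the subquadraticity of $H$ (i.e. $\gamma<2$, equivalently $\gamma'>2$) is precisely what guarantees that the rescaled Hamiltonian terms neither vanish nor blow up and that the gradient of the rescaled $u_\varepsilon$ stays controlled. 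Passing to the limit along the uniform estimates, the rescaled pairs converge to an entire solution — possibly stationary, possibly on a half-space — of a limiting MFG system with pure power coupling $\sim m^{\alpha}$, whose energy is finite by the estimates above. The restriction $\alpha < (\gamma'-2)/(N+2-\gamma')$, which can be rewritten as $\gamma' > 2 + N\alpha/(\alpha+1)$, is exactly the condition making the rescaling admissible and enabling a Liouville-type nonexistence theorem (in the spirit of the stationary analysis of \cite{cir16}, to which one can often reduce when the time scale separates) to close the argument.

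Once $\|m_\varepsilon\|_{L^\infty(Q)}$ is bounded uniformly, $f_\varepsilon(x,m_\varepsilon)$ is bounded, so parabolic regularity for the Hamilton–Jacobi equation gives $\varepsilon$-uniform $C^{1,\theta}$ bounds on $u_\varepsilon$ and hence a Hölder continuous drift $\nabla H(\nabla u_\varepsilon)$; Schauder estimates for the Fokker–Planck equation then give $\varepsilon$-uniform $C^{2,\theta}$ bounds on $m_\varepsilon$, and a standard bootstrap upgrades these to $\varepsilon$-uniform bounds in the spaces where \eqref{MFG} makes classical sense. Extracting a subsequence converging in $C^{2}(\overline Q)$ produces a classical solution $(u,m)$ of \eqref{MFG}. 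Finally, since the penalised minima converge to $\inf_{\Kcal}\mathcal{E}$ (using $F_\varepsilon\to F$ monotonically) and the convexified functional is lower semicontinuous along the constraint $\Kcal$, the limiting pair $(m,-m\nabla H(\nabla u))$ is itself a minimiser of $\mathcal{E}$.

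The main obstacle is clearly the blow-up step: choosing rescaling exponents consistent for the coupled HJB/Fokker–Planck pair, securing compactness of the rescaled solutions (which requires locally uniform estimates that must be extracted carefully from the global energy bounds), and establishing the Liouville theorem that rules out the blow-up limit under the stated bound on $\alpha$. The remaining ingredients — the energy comparison, the Gagliardo–Nirenberg/elliptic-regularity estimate of Proposition \ref{GN2}, the Schauder bootstrap, and the limit passage — are more routine once the $L^\infty$ bound on $m_\varepsilon$ is in hand.
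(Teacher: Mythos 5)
Your proposal has the right global shape (approximate, get classical solutions, prove a uniform $L^\infty$ bound on $m$ by blow-up, then bootstrap and pass to the limit), but two central ingredients do not match the paper and, as stated, the plan does not close.

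First, the penalisation is applied to the wrong object. You propose to truncate $f$ and claim that the regularised system then ``falls within the scope of Theorem~\ref{ex_clas}.'' It does not: the hypothesis $\gamma'>N+2$ in Theorem~\ref{ex_clas} is a constraint on the Hamiltonian, and it is not affected by modifying the coupling. The paper instead penalises the \emph{Lagrangian}, setting $L_\eta(q):=L(q)+\tfrac{\eta}{N+3}|q|^{N+3}$, so that the conjugate Hamiltonian $H_\eta$ has growth $\gamma=\frac{N+3}{N+2}$, i.e.\ $\gamma'=N+3>N+2$ for every fixed $\eta$, while the coupling $f$ is left untouched. With that choice Theorem~\ref{ex_clas} really does apply to the penalised system and produces the classical minimisers $(u_\eta,m_\eta)$. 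Your truncation of $f$ would require a separate existence argument for the truncated problem (for instance a direct fixed point when $f_\varepsilon$ is bounded), and in addition you would then need uniform estimates that recover $F$ from $F_\varepsilon$ — none of which you address. So as written this first step is a genuine gap, not a cosmetic one.

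Second, your blow-up step is not how the contradiction is obtained, and the step you flag as ``the main obstacle'' (a Liouville-type nonexistence theorem for the blown-up limit) is precisely what the paper's argument avoids. After rescaling with $a_\eta=M_\eta^{-\alpha/(\gamma'-2)}$ (here $\gamma'>2$ enters, exactly as you say, to make $a_\eta\to0$ and the rescaled Hamiltonian at most quadratic, so that the rescaled value function and its gradient stay bounded), the paper does \emph{not} pass to an entire limiting MFG system. Instead it observes that the rescaled density $\mu_\eta$ is uniformly H\"older and equals $1$ at the origin, hence $\int_U \mu_\eta^p\,dxdt\ge\delta>0$ on a fixed neighbourhood; on the other hand the global $L^p$ bound on $m_\eta$ (from Lemma~\ref{lem_p} and the energy estimates) and the scaling give
\[
\int_{Q_\eta}\mu_\eta^p\,dxdt = M_\eta^{\alpha\frac{N+2}{\gamma'-2}-p}\,\|m_\eta\|_{L^p(Q)}^p \longrightarrow 0
\]
for any $p$ with $\alpha\frac{N+2}{\gamma'-2}<p<\frac{N+2}{N+2-\gamma'}$, a window that is nonempty exactly because $\alpha<\frac{\gamma'-2}{N+2-\gamma'}$. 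The contradiction is immediate and no Liouville theorem is needed. Since you explicitly leave the Liouville theorem unproved, your argument, even after repairing the penalisation, remains incomplete; the paper's direct integral comparison both replaces that step and is the actual place where the restriction on $\alpha$ is used. Once these two points are corrected, the remaining parts of your sketch (uniform $C^{1,\theta}$ bounds for $u_\eta$ from Proposition~\ref{hjb_regularity}, Schauder bootstrap for $m_\eta$, convergence of the penalised minima to $\min_{\Kcal}\Ecal$) are consistent with the paper.
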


A convexification argument has already been used by  Briani and Cardaliaguet in \cite{BriCar} to prove the existence of classical solutions for the case of energy functionals where the coupling is a regularising function of $m$. In their setting, the energy not necessarily convex, but it is automatically bounded by below. Their result is part of a more general analysis on stability of solutions in MFG having multiple equilibria. 

Up to our knowledge, the only examples of non-uniquess of solutions available in the literature for the evolutionary problem are the ones presented in Briani and Cardaliaguet \cite{BriCar} and Bardi-Fisher \cite{BarFis}. The one we present here consists in a class of MFG systems for which the solution that minimises $\mathcal E $ is not equal to the trivial solution $(\bar u, \bar m)= ((t-T)f(1),1)$.  Indeed, starting from a solution of the associated stationary problem, it is possible to construct  a suitable competitor $(u,m)$ for which $\mathcal E (u,m)<\mathcal E (\bar u, \bar m)$.

Finally, one should expect similar features for the stationary and non-stationary problems when the time horizon $T$ is large. Conversely, if $T$ is small, the two settings may exhibit different features.
As mentioned before, existence of solutions might be false in the stationary case when the coupling is very strong. On the other hand, in the parabolic case a standard contraction argument applies: we prove the existence of a classical solution of \eqref{MFG} for small $T$, without requiring {\it any} assumption on the growth at infinity of $f$. 

\begin{thm}\label{short_ex} Suppose that $f, H \in C^3$. Then, there exists $T^* > 0$ such that for all $T \in (0,T^*]$, \eqref{MFG} has a classical solution.
\end{thm}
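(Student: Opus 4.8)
The plan is to prove short-time solvability of \eqref{MFG} by a Banach fixed point argument that decouples the two equations. Fix $\beta\in(0,1)$ and consider the map $\Phi$ defined as follows: given $m$, first solve the backward Hamilton--Jacobi--Bellman equation $-u_t-\Delta u+H(\nabla u)=-f(x,m)$ with terminal datum $u(\cdot,T)=u_T$, then set $b:=-\nabla H(\nabla u)$ and solve the forward Fokker--Planck equation $\tilde m_t-\Delta\tilde m-\diverg(b\,\tilde m)=0$ with $\tilde m(\cdot,0)=m_0$, and put $\Phi(m):=\tilde m$. A fixed point of $\Phi$ is exactly a solution of \eqref{MFG}. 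I would run the argument on
\[
S:=\{\,m\in C^{\beta/2,\beta}(\bar Q_T):\ m\ge 0,\ m(\cdot,0)=m_0,\ \|m\|_{C^{\beta/2,\beta}(\bar Q_T)}\le R\,\},
\]
a complete metric space for the $C^{\beta/2,\beta}$ distance, with $R:=\|m_0\|_{C^{\beta}(\T)}+2$. On $S$ one has $0\le m\le R$, so $f(\cdot,m)$ is bounded by a constant $C(R)$ by continuity of $f$, and, since $f\in C^3$, the map $m\mapsto f(\cdot,m)$ is Lipschitz from $S$ into $C^{\beta/2,\beta}(\bar Q_T)$ with constant $C(R)$. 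Standard parabolic theory gives, for $T$ small, a unique solution $u=u[m]\in C^{1+\beta/2,2+\beta}(\bar Q_T)$ of the HJB equation, with $\|u[m]\|_{C^{1+\beta/2,2+\beta}}\le C(R,u_T)$ (the solution stays close to $u_T$ when $T$ is small, so this bound does not degenerate as $T\to 0$); since $H\in C^3$ the drift $b[m]$ and $\diverg b[m]$ are then bounded in $C^{\beta/2,\beta}$ by $C(R)$, and the linear, non-degenerate Fokker--Planck equation has a unique solution $\tilde m=\Phi(m)\in C^{1+\beta/2,2+\beta}(\bar Q_T)$, with $\tilde m>0$ and $\int_\T\tilde m(\cdot,t)\,dx=1$ by the maximum principle and conservation of mass.

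Next I would check $\Phi(S)\subset S$ for $T$ small. The function $\tilde m-m_0$ solves $\partial_t(\tilde m-m_0)-\Delta(\tilde m-m_0)=\Delta m_0+\diverg(b\,\tilde m)$ with zero initial datum, so a Duhamel/comparison estimate gives $\|\tilde m-m_0\|_{L^\infty(Q_T)}\le C(R)\,T^{1/2}$, while the Schauder bound gives $\|\tilde m\|_{C^{1+\beta/2,2+\beta}(\bar Q_T)}\le C(R)$; interpolating between these two bounds yields $\|\tilde m-m_0\|_{C^{\beta/2,\beta}(\bar Q_T)}\le C(R)\,T^{\sigma}$ for some $\sigma>0$, hence $\|\tilde m\|_{C^{\beta/2,\beta}}\le\|m_0\|_{C^{\beta}}+C(R)T^{\sigma}\le R$ once $T$ is small.

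The heart of the proof is the contraction estimate. For $m_1,m_2\in S$ set $u_i=u[m_i]$, $b_i=b[m_i]$, $\tilde m_i=\Phi(m_i)$. Writing $H(\nabla u_1)-H(\nabla u_2)=c(x,t)\cdot\nabla(u_1-u_2)$ with $c$ bounded (smoothness of $H$, bounded gradients), the difference $w:=u_1-u_2$ solves a linear backward parabolic equation with bounded coefficients, zero terminal datum and right-hand side $f(\cdot,m_1)-f(\cdot,m_2)$; thus $\|w\|_{L^\infty}\le T\,\|f(\cdot,m_1)-f(\cdot,m_2)\|_{L^\infty}$ and $\|w\|_{C^{1+\beta/2,2+\beta}}\le C(R)\|f(\cdot,m_1)-f(\cdot,m_2)\|_{C^{\beta/2,\beta}}$, so parabolic interpolation gives $\|\nabla(u_1-u_2)\|_{C^{\beta/2,\beta}(\bar Q_T)}\le C(R)\,T^{\theta}\|m_1-m_2\|_{C^{\beta/2,\beta}(\bar Q_T)}$ for some $\theta>0$, whence $\|b_1-b_2\|_{C^{\beta/2,\beta}}\le C(R)T^{\theta}\|m_1-m_2\|_{C^{\beta/2,\beta}}$. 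Finally $\tilde m_1-\tilde m_2$ solves $\partial_t(\tilde m_1-\tilde m_2)-\Delta(\tilde m_1-\tilde m_2)-\diverg\big(b_1(\tilde m_1-\tilde m_2)\big)=\diverg\big((b_1-b_2)\tilde m_2\big)$ with zero initial datum; handling the drift term $\diverg(b_1\cdot)$ by a Gronwall argument on the $L^\infty$ norm and interpolating once more against the uniform Schauder bound, one obtains $\|\tilde m_1-\tilde m_2\|_{C^{\beta/2,\beta}}\le C(R)T^{\theta}\|m_1-m_2\|_{C^{\beta/2,\beta}}$. Choosing $T^*>0$ so small that $C(R)(T^*)^{\theta}\le\tfrac12$ (and all previous smallness conditions hold) makes $\Phi$ a contraction on $S$, so it has a unique fixed point $m$; then $(u[m],m)$ solves \eqref{MFG}, and since $f,H\in C^3$ a standard Schauder bootstrap upgrades it to a classical solution.

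The main obstacle I anticipate is the uniform bookkeeping of constants: one must ensure that no constant secretly depends on the unknown and that none blows up as $T\to 0$, and one must extract the decisive factors $T^{\sigma},T^{\theta}$. These gains do \emph{not} come from the Schauder estimates themselves (which are only $T$-uniform), but from combining the elementary $L^\infty$ bounds on the differences --- available precisely because those differences vanish on the initial, resp.\ terminal, time slice --- with parabolic interpolation inequalities. A secondary point is that $f$ is a priori defined only for $m\ge 0$, which is why the constraint $m\ge 0$ is built into $S$; it is preserved by $\Phi$ thanks to the maximum principle for the Fokker--Planck equation.
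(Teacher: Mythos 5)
Your argument is correct in outline, but it is a genuinely different route from the one in the paper, and the comparison is instructive. The paper does not decouple the system: it rewrites \eqref{MFG} in Duhamel (mild) form after reversing time in the first equation, and runs a contraction on a set $\mathcal{Z}_a$ of \emph{pairs} $(v,m)$ close to the data $(u_T,m_0)$. The iteration map there is a pure integral operator built from the heat semigroup $e^{t\Delta}$ applied to $f(m)-H(\nabla v)$ and $\diverg(\nabla H(\nabla v)m)$; in particular no nonlinear equation is solved at each step. The small factor is obtained by choosing two Hölder exponents $0<\nu<\beta<1$ and using the smoothing estimate $\|e^{t\Delta}u\|_{C^{2,\nu}}\le Ct^{-(2+\nu-\beta)/2}\|u\|_{C^{0,\beta}}$, whose time singularity is integrable, yielding $T^{(\beta-\nu)/2}$ directly. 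Your scheme instead decouples: given $m$, solve the backward quasilinear HJB exactly, then solve the forward Fokker--Planck, and iterate on $m$ alone; the small factor comes from combining the $L^\infty$ smallness of the differences (which vanish on an initial/terminal slice) with $T$-uniform Schauder bounds via interpolation. Both work, and the set-up you use is perhaps more standard-looking; the price is that your map $\Phi$ presupposes unique short-time solvability of the fully nonlinear HJB with an existence time and Schauder bound that are \emph{uniform} over $m\in S$. That is true here because $\|f(\cdot,m)\|_{C^{\beta/2,\beta}}\le C(R)$ uniformly on $S$, but it should be stated explicitly rather than absorbed into ``standard parabolic theory''; the paper's integral-operator formulation sidesteps this point entirely. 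A second minor remark: your interpolation step producing $T^\theta$ has to be done in the parabolic Hölder scale on a cylinder whose time extent shrinks with $T$; the inequalities you need (bounding $[\,\cdot\,]_{C^{\beta/2}_t}$ via $\|\cdot\|_{L^\infty}$ and $\|\partial_t\cdot\|_{L^\infty}$, etc.) do have $T$-uniform constants, but this is worth saying, since it is precisely where a careless use of interpolation could silently lose the gain as $T\to 0$. Finally, note that as written $\mathcal{Z}_a$ in the paper and your set $S$ both implicitly require a Hölder modulus for $u_T$ and $m_0$ beyond the standing $C^2$ hypothesis; this is the same benign regularity assumption in both arguments.
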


The idea is to exploit the local regularity of $f$ and $H$; even if the coupling is very strong, during a small time interval the distribution $m$ should remain close to the initial datum $m_0$, without developing singularities. 

We mention that the contraction theorem has already been used by Ambrose in \cite{Ambrose} in the MFG setting, in order to prove the existence of small, locally unique, strong solutions over any finite time interval in the case of a local coupling and a superquadratic Hamiltonian, when $m_0$ is chosen sufficiently close to a uniform distribution.

The paper is organized as follows:  we first collect embedding theorems and estimates for the Hamilton-Jacobi equation and  Fokker-Planck equation.  Section \ref{existence} is devoted to the proof of Theorems \ref{ex_clas}, \ref{ex_weak}  and \ref{ex_clas_2}. A non-uniqueness example is shown in Section \ref{non-uniqueness}. Finally, the appendix contains the proof of the existence of classical solutions for small $T$.
 
\bigskip

{\bf Acknowledgements.} The first author is partially supported by the Fondazione CaRiPaRo Project ``Nonlinear Partial Differential Equations: Asymptotic Problems and Mean-Field Games'' and the INdAM-GNAMPA project ``Fenomeni di segregazione in sistemi stazionari di tipo Mean Field Games a pi\`u popolazioni''. The second author is partially supported by  the  ANR project  MFG ANR-16-CE40-0015-01, the PEPS-INSMI Jeunes project ``SOME OPEN PROBLEMS IN MEAN FIELD
GAMES'' for the years 2016 and 2017,  and the PGMO project VarPDEMFG.

\section{Notations and preliminaries} 

We begin with the definition of some particular Banach  spaces involving time and space weak derivatives. Let $Q = Q_T = \T \times (0, T)$ and $r>1$. We denote by $W^{2,1}_r(Q)$ the space of functions $u \in L^r(Q)$ having weak space derivatives $D^\beta_x u \in L^r(Q)$ for $|\beta| \le 2$ and weak time derivative $ u_t \in L^r(Q)$, equipped with the norm
\[
\|u\|_{W^{2,1}_r(Q)} = \|u\|_{L^r(Q)} + \| u_t\|_{L^r(Q)} + \sum_{1 \le |\beta| \le 2} \|D^\beta_x u\|_{L^r(Q)}.
\]
Similarly, $W^{1,0}_r(Q)$ is endowed with the norm
\[
\|u\|_{W^{1,0}_r(Q)} = \|u\|_{L^r(Q)} + \sum_{|\beta| = 1} \|D^\beta_x u\|_{L^r(Q)}.
\]
Finally, we denote by $\H^{r,1}(Q)$,  the space of functions $u \in W^{1,0}_r(Q)$ with $ u_t \in (W^{1,0}_{r'}(Q))'$, equipped with the norm
\[
\|u\|_{\H^{r,1}(Q)} = \|u\|_{W^{1,0}_r(Q)} + \|u_t\|_{(W^{1,0}_{r'}(Q))'}.
\]
For a given Banach space $X$, $L^p((0,T), X)$ and $C^{0,\theta}([0,T], X)$ will denote the usual Lebesgue and H\"older parabolic spaces respectively.

We recall here some embedding results enjoyed by $\H^{r,1}(Q)$. 

\begin{prop}\label{parab_embed} The following embeddings hold.
\begin{enumerate}
\item[\textit{(i)}] If $1 < r < N+2$, then $\H^{r,1}(Q)$ is continuously embedded in $L^q(Q)$, for $1 \le q \le  \frac{(N+2)r}{N+2 - r}$.
\item[\textit{(ii)}] If $ r \ge N+2$, then $\H^{r,1}(Q)$ is continuously embedded in $L^q(Q)$, for all $1 \le q < \infty$.
\item[\textit{(iii)}] If $ r > N+2$, then there exist $\nu, \theta > 0$ such that $\H^{r,1}(Q)$ is continuously embedded in $C^{0,\nu}([0,T], C^{0,\theta}(\T))$.
\end{enumerate}
\end{prop}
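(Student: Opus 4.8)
The plan is to prove the three embeddings for $\H^{r,1}(Q)$ by reduction to known facts about the heat semigroup and standard parabolic interpolation, treating the space $\H^{r,1}(Q)$ as the natural parabolic analogue of the elliptic space $W^{1,p}$ with dual time derivative. The starting point is the observation that $u \in \H^{r,1}(Q)$ means $u, \nabla u \in L^r(Q)$ and $u_t \in L^{r'}((0,T),(W^{1,r'}(\T))')$, which is exactly the regularity that makes the formula $\frac{d}{dt}\|u(t)\|_{L^2(\T)}^2 = 2\langle u_t(t), u(t)\rangle$ valid (after a density argument), so that $t \mapsto \|u(t)\|_{L^2(\T)}^2$ is absolutely continuous; this is the Lions--Aubin type lemma. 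From this one first gets the \emph{time-continuity} $u \in C^0([0,T], L^2(\T))$ together with the energy-type bound $\sup_t \|u(t)\|_{L^2(\T)} \le C \|u\|_{\H^{r,1}(Q)}$.

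First I would establish the borderline case $r = 2$ as a model: when $r=2$, $\H^{2,1}(Q)$ is the classical space $L^2((0,T),H^1(\T)) \cap H^1((0,T),(H^1(\T))')$, and the interpolation inequality
\[
\|u\|_{L^{\frac{2(N+2)}{N}}(Q)}^2 \le C \|u\|_{L^\infty((0,T),L^2(\T))} \, \|u\|_{L^2((0,T),H^1(\T))}
\]
follows from the Gagliardo--Nirenberg--Sobolev inequality $\|v\|_{L^{2N/(N-2)}(\T)} \le C\|v\|_{H^1(\T)}$ (with the usual modification for $N\le 2$) raised to an appropriate power and integrated in time, combined with the uniform $L^2$-in-space bound above. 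This gives (i) in the exponent-endpoint case $r=2$, $q = \frac{2(N+2)}{N} = \frac{(N+2)\cdot 2}{N+2-2}$.

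Next I would handle general $1 < r < N+2$ by the same scheme but with $L^r$ replacing $L^2$: the key estimate is
\[
\|u\|_{L^{\frac{(N+2)r}{N}}(Q)} \le C\, \|u\|_{L^\infty((0,T),L^r(\T))}^{\frac{2}{N+2}}\, \|u\|_{L^r((0,T),W^{1,r}(\T))}^{\frac{N}{N+2}},
\]
which is again Gagliardo--Nirenberg in space (using $\|v\|_{L^{Nr/(N-r)}(\T)} \lesssim \|v\|_{W^{1,r}(\T)}$) integrated against the time-sup bound. The one genuine subtlety is that for general $r$ the quantity $\sup_t \|u(t)\|_{L^r(\T)}$ must still be controlled by $\|u\|_{\H^{r,1}(Q)}$; this I would get by testing the equation satisfied by $u$ (or rather, directly from the duality pairing) against $|u|^{r-2}u$, i.e. by reproducing the Lions--Aubin computation with $\frac{d}{dt}\|u(t)\|_{L^r(\T)}^r = r\langle u_t, |u|^{r-2}u\rangle$ and using that $|u|^{r-2}u \in L^{r'}((0,T),W^{1,r'}(\T))$ when $u \in L^r((0,T),W^{1,r}(\T)) \cap L^\infty$, closing the estimate via a bootstrap/Gronwall argument. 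Then the target exponent $\frac{(N+2)r}{N+2-r}$ in (i) is obtained by interpolating this $L^{(N+2)r/N}$ bound with the trivial $L^r(Q)$ bound, or more directly by the sharp form of the parabolic embedding; I expect the honest parabolic Gagliardo--Nirenberg statement (e.g. from Lady\v zenskaja--Solonnikov--Ural'ceva, Ch. II, or DiBenedetto) gives $\frac{(N+2)r}{N+2-r}$ in one shot. For (ii), when $r \ge N+2$ the exponent $\frac{(N+2)r}{N+2-r}$ formally passes to $+\infty$, so a small loss ($r$ slightly below $N+2$ if $r=N+2$, or plain Hölder in $Q$ if $r>N+2$) yields $L^q(Q)$ for every finite $q$. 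For (iii), when $r > N+2$, I would combine the time-continuity $u\in C^0([0,T],L^2(\T))$ with the parabolic regularity to upgrade to Hölder continuity: from $u \in L^r((0,T),W^{1,r}(\T))$ with $r > N+2 > N$ one has $u(t) \in C^{0,1-N/r}(\T)$ for a.e.\ $t$ with the Hölder seminorm in $L^r((0,T))$, while $u_t$ in the dual space controls a fractional-in-time modulus; a standard argument (mollify in space, use $u_t \in (W^{1,r'}_x)'$ to estimate time increments of the mollified function, then let the mollification parameter and the time increment scale together) produces a joint modulus $C^{0,\nu}([0,T],C^{0,\theta}(\T))$ with $\nu,\theta$ depending on $r-(N+2)$.

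The main obstacle will be (iii): unlike (i)--(ii), which are pure interpolation inequalities that only see the $L^r$ norms of $u$ and $\nabla u$ and an a-priori $L^\infty$-in-time bound, the Hölder-in-time regularity genuinely uses the structure of the dual-space time derivative and requires a careful two-parameter balancing (spatial mollification scale versus time increment) to convert ``$u_t$ lives in $(W^{1,r'}_x)'$'' into an actual Hölder modulus in $t$. I would isolate this as the technical heart of the proof and, if a clean reference is available, simply cite the corresponding result for $\H^{r,1}$-type spaces; otherwise I would carry out the mollification argument explicitly. The estimates in (i) for general $r \neq 2$ also require the auxiliary ``$\sup_t \|u(t)\|_{L^r(\T)} \le C\|u\|_{\H^{r,1}(Q)}$'' step, which, while routine, is the only place where one must be slightly careful about which test functions are admissible in the weak formulation and about justifying the chain rule $\frac{d}{dt}\|u\|_{L^r}^r$; a density argument reducing to smooth $u$ handles this.
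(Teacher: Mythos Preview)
The paper does not give a self-contained proof here: parts (i)--(ii) are quoted from \cite[Theorem~A.3]{MPR} (with a one-line remark that an extension operator reduces the $\T$ case to $\Rset^N$), and (iii) from \cite[Proposition~3.5]{MPR}. Your proposal is therefore far more ambitious than what the paper does. That said, the scheme ``establish $\sup_t\|u(t)\|_{L^r(\T)}\le C\|u\|_{\H^{r,1}}$, then Gagliardo--Nirenberg in space, then integrate in time'' has a genuine quantitative gap away from $r=2$.

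The gap is already visible for $r>2$. With $\|u\|_{L^\infty_tL^r_x}$ as the low-regularity input in $\|v\|_{L^q_x}\le C\|v\|_{W^{1,r}_x}^\theta\|v\|_{L^r_x}^{1-\theta}$, integrating $\|u(t)\|_{L^q}^q$ in $t$ forces $q\theta\le r$; maximising over $\theta$ gives $\theta=\tfrac{N}{N+r}$ and $q=\tfrac{r(N+r)}{N}$. (The displayed inequality you wrote, with exponent $\tfrac{(N+2)r}{N}$ and powers $\tfrac{2}{N+2},\tfrac{N}{N+2}$, is correct only at $r=2$.) Since $\tfrac{r(N+r)}{N}<\tfrac{(N+2)r}{N+2-r}$ for every $r>2$---e.g.\ $N=2$, $r=3$ gives $7.5<12$---the endpoint in (i) is not reached even if the $L^\infty_tL^r_x$ bound is granted. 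What actually closes the gap is the sharp time-trace $\H^{r,1}(Q)\hookrightarrow C\big([0,T];(W^{-1,r},W^{1,r})_{1/r',r}\big)$, a Besov space of order $\tfrac{r-2}{r}$; for $r>2$ this embeds into $L^{Nr/(N+2-r)}(\T)$, and running GN with that space in place of $L^r_x$ recovers exactly $\tfrac{(N+2)r}{N+2-r}$. For $1<r<2$ the same trace space has \emph{negative} order, so the auxiliary bound $\sup_t\|u(t)\|_{L^r}\le C\|u\|_{\H^{r,1}}$ you are aiming for is not available; the test function $|u|^{r-2}u$ is also problematic there since $\nabla(|u|^{r-2}u)=(r-1)|u|^{r-2}\nabla u$ is singular near zeros of $u$. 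Finally, your fallback references (Lady\v zenskaja--Solonnikov--Ural'ceva Ch.~II, DiBenedetto) give embeddings for $W^{2,1}_r$ or for $V^{1,0}_2$-type spaces that already \emph{assume} an $L^\infty_tL^p_x$ bound, not for $\H^{r,1}$ with a dual time derivative---which is precisely why the paper defers to \cite{MPR}.
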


\begin{proof} For {\it i)} and {\it ii)}, see \cite[Theorem A.3]{MPR}. Such embeddings are indeed verified in the case $Q = \RsetN \times (0,T)$; in order to derive them in the case $Q = \T \times (0,T)$, it is sufficient to observe that a linear operator extending $\H^{r,1}(\T \times (0,T))$ to $\H^{r,1}(\RsetN \times (0,T))$ continuously can be constructed by standard methods. As for {\it iii)}, one may reason as in \cite[Proposition 3.5]{MPR}.
\end{proof}

When $1 < r < N+2$,  $\H^{r,1}(Q)$ can be compactly embedded in $L^q(Q)$  under the stronger assumption $1 \le q <  \frac{(N+2)r}{N+2 - r}$. 

\begin{prop}\label{parab_embed_comp} If $1 < r < N+2$, then $\H^{r,1}(Q)$ is compactly embedded in $L^q(Q)$, for all $1 \le q <  \frac{(N+2)r}{N+2 - r}$.
\end{prop}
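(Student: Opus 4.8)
The plan is to combine an Aubin--Lions--Simon compactness argument with the continuous Sobolev embedding of Proposition \ref{parab_embed}(i) and an elementary interpolation inequality.

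First I would rewrite $\H^{r,1}(Q)$ in Bochner form. By Fubini's theorem $W^{1,0}_r(Q) = L^r((0,T), W^{1,r}(\T))$ with equivalent norms, and since $W^{1,r'}(\T)$ is reflexive and separable, the dual of $W^{1,0}_{r'}(Q) = L^{r'}((0,T), W^{1,r'}(\T))$ is isometrically $L^r((0,T), (W^{1,r'}(\T))')$. Hence $u \in \H^{r,1}(Q)$ means precisely $u \in L^r((0,T), W^{1,r}(\T))$ with distributional time derivative $u_t \in L^r((0,T), (W^{1,r'}(\T))')$, the $\H^{r,1}$-norm being equivalent to the sum of these two Bochner norms.

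Second I would invoke the Aubin--Lions--Simon lemma with the triple $X_0 = W^{1,r}(\T)$, $B = L^r(\T)$, $X_1 = (W^{1,r'}(\T))'$: the embedding $X_0 \hookrightarrow B$ is compact by Rellich--Kondrachov on the compact manifold $\T$, while $B \hookrightarrow X_1$ is continuous. Since $r < \infty$, any bounded sequence in $\H^{r,1}(Q)$ is bounded in $L^r((0,T), X_0)$ with time derivatives bounded in $L^r((0,T), X_1) \subset L^1((0,T), X_1)$, so the lemma yields relative compactness in $L^r((0,T), B) = L^r(Q)$. This already settles the case $1 \le q \le r$, because $Q$ has finite measure, so $L^r(Q) \hookrightarrow L^q(Q)$ continuously.

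Finally, for $r < q < q^{*} := \frac{(N+2)r}{N+2-r}$ I would upgrade $L^r$-convergence to $L^q$-convergence. Given $(u_n)$ bounded in $\H^{r,1}(Q)$, Proposition \ref{parab_embed}(i) makes it bounded in $L^{q^{*}}(Q)$, and by the previous step a subsequence converges in $L^r(Q)$ to some $u$, which then also lies in $L^{q^{*}}(Q)$. Choosing $\theta \in (0,1)$ with $\frac1q = \frac{1-\theta}{r} + \frac{\theta}{q^{*}}$, the interpolation inequality gives $\|u_n - u\|_{L^q(Q)} \le \|u_n - u\|_{L^r(Q)}^{1-\theta}\,\|u_n - u\|_{L^{q^{*}}(Q)}^{\theta} \to 0$, which proves compactness of the embedding into $L^q(Q)$ for every $1 \le q < q^{*}$. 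The only delicate point is the first step, namely identifying $(W^{1,0}_{r'}(Q))'$ with $L^r((0,T),(W^{1,r'}(\T))')$ so that the Aubin--Lions--Simon machinery applies verbatim; everything else is a routine assembly of standard facts. One could alternatively bypass this identification by a direct Fréchet--Kolmogorov argument, controlling space translations through the $L^r(0,T;W^{1,r})$ bound and time translations through the bound on $u_t$, but this essentially reproduces the proof of the Aubin--Lions--Simon lemma.
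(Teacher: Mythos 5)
Your proof is correct and follows essentially the same route as the paper: identify $\H^{r,1}(Q)$ with the Bochner space $\{u \in L^r((0,T),W^{1,r}(\T)) : u_t \in L^r((0,T),(W^{1,r'}(\T))')\}$, apply the Aubin--Lions--Simon lemma with the triple $W^{1,r}(\T) \hookrightarrow L^r(\T) \hookrightarrow (W^{1,r'}(\T))'$ to obtain compactness into $L^r(Q)$, and then upgrade to $L^q(Q)$ for $r < q < \frac{(N+2)r}{N+2-r}$ by interpolating with the $L^{q^*}$ bound from Proposition \ref{parab_embed}(i). No substantive differences from the paper's argument.
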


\begin{proof} The proof relies on the so-called Aubin-Lions-Simon Lemma. 

Let $1 < r < N+2$. Note first that $W^{1,r'}(\T)$ is reflexive and separable. Therefore, $L^r((0,T), (W^{1,r'}(\T)')$ is isomorphic to $(L^{r'}((0,T), W^{1,r'}(\T)))'$, and the latter space coincides with $(W^{1,0}_{r'}(Q))'$. Since $W^{1,0}_{r}(Q)$ coincides with $L^{r}((0,T), W^{1,r}(\T))$, we have that the space $\H^{r,1}(Q)$ is isomorphic to
\[
E = \{ u \in L^{r}((0,T), W^{1,r}(\T)), \,  u_t \in L^r((0,T), (W^{1,r'}(\T)')\}.
\]
As $W^{1,r}(\T)$ is compactly embedded in $L^r(\T)$, and $L^r(\T)$ is continuously embedded in $(W^{1,r'}(\T)'$, the Aubin-Lions-Simon lemma (see \cite{Simon}) states that $E$ is compactly embedded in $L^r(Q)$ (this results actually holds for any $r \ge N+2$). Hence,  $\H^{r,1}(Q)$ is compactly embedded in $L^r(Q)$ and the result is for  $1\leq q \leq r$.

Let now $u_n$ be a bounded sequence in $\H^{r,1}(Q)$; we may extract a subsequence $u_{n_k}$ that converges to $u$ strongly in $L^r(Q)$. For any $r < q < \frac{(N+2)r}{N+2 - r}$,  by interpolation, there exists $0<\theta<1$ such that 
\[
\|u_{n_k}- u_{n_j}\|_{L^q(Q)} \le \|u_{n_k} - u_{n_j}\|^\theta_{L^r(Q)} \|u_{n_k} - u_{n_j}\|^{1-\theta}_{L^{\frac{(N+2)r}{N+2 - r}}(Q) }\to 0,
\]
as $j, k \to \infty$, being $u_{n_k}\in\H^{r,1}(Q)\subset L^{\frac{(N+2)r}{N+2 - r}}(Q)$  by Proposition \ref{parab_embed}, so $u_{n_k}$ converges also in $L^q(Q)$.
\end{proof}

The following is a classical H\"older regularity result for Hamilton-Jacobi equations with quadratic or subquadratic Hamiltonians.

\begin{prop}\label{hjb_regularity} Let $\Omega$ be a bounded domain in $\mathbb R^N$. Suppose that $v$ is a classical solution of
\[
-v_t - \Delta v + \widetilde H(\nabla v) = W(x,t) \quad \text{in $\widetilde Q = \Omega \times (0, \tau)$,} \ \tau>0
\]
and that for some $K > 0$,
\[
\begin{split}
\bullet  \quad & \|v\|_{L^\infty(\widetilde Q)}, \|W\|_{L^\infty(\widetilde Q)}, \|v(\cdot, \tau)\|_{C^2(\Omega)} \le K, \\
\bullet  \quad & |\widetilde H(p)| \le K(|p|^2 + 1) \text{ for all $p\in \mathbb R^N$}.
\end{split}
\]

Then, for all $\Omega' \subset \subset \Omega$, there exists $C > 0$, $\beta > 0$ (depending on $K, \Omega'$, but not on $\tau$),  such that
\[
\|v\|_{C^{1,\beta}(\Omega' \times (0, \tau))} \le C.
\]
\end{prop}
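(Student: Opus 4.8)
The plan is to use the at-most-quadratic growth of $\widetilde H$ to obtain an \emph{interior gradient bound} for $v$, which reduces the Hamilton--Jacobi equation to a linear parabolic one with bounded right-hand side, and then to bootstrap by standard interior parabolic regularity. It is convenient to reverse time: with $s := \tau - t$ and $\bar v(x,s) := v(x,\tau - s)$, the function $\bar v$ solves the \emph{forward} parabolic equation $\bar v_s - \Delta\bar v + \widetilde H(\nabla\bar v) = \bar W$ in $\Omega\times(0,\tau)$, where $\bar W(x,s):=W(x,\tau-s)$, the initial trace $\bar v(\cdot,0) = v(\cdot,\tau)$ is bounded in $C^2(\Omega)$, and $\|\bar v\|_\infty,\|\bar W\|_\infty \le K$.

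\emph{Step 1 (interior gradient bound).} I would appeal to (or reprove by the Bernstein method) the classical estimate: for every $\Omega'\subset\subset\Omega''\subset\subset\Omega$ there is $C_1 = C_1(K,\Omega'',\Omega)$, \emph{independent of $\tau$}, with $\|\nabla v\|_{L^\infty(\Omega''\times(0,\tau))}\le C_1$. Concretely: fix $\zeta\in C_c^\infty(\Omega)$ with $\zeta\equiv 1$ on $\Omega''$ and control $s\mapsto \int \zeta^{2p}|\nabla\bar v|^{2p}\,dx$ for $p$ large (the integral form of the Bernstein method, which — unlike the pointwise version via the maximum principle — accommodates the merely bounded $\bar W$); after differentiating the equation in space and using it to eliminate $\Delta\bar v$, the right-hand side $\bar W$ enters the resulting identity without derivatives, while the second-order (Bochner) term carries the correct sign to absorb the contribution of $\widetilde H(\nabla\bar v)$ because $\widetilde H$ grows at most quadratically — the borderline case. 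One is then left with a closed differential inequality for $s\mapsto\int\zeta^{2p}|\nabla\bar v|^{2p}$ that yields an $L^\infty$ bound on $|\nabla\bar v|$ over $\Omega''\times(0,\tau)$; the bound depends only on $K$ and $\zeta$, and is $\tau$-independent because the argument is localised in space and is controlled at $s=0$ by the uniformly $C^2$ initial trace, never by the length of the time interval. (When $\widetilde H(p)=|p|^2$ one may instead use the Hopf--Cole substitution $\phi = e^{-\bar v}$, which linearises the equation to $\phi_s-\Delta\phi+\bar W\phi=0$ with $\phi$ bounded and bounded away from $0$, so that linear parabolic regularity gives $\phi\in C^{1,\beta}_{loc}$ with $\tau$-independent bounds and $v=-\log\phi$ inherits the estimate. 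Note also that $\widetilde H$ may be taken smooth, as it is in every application of this Proposition.)

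\emph{Step 2 (bootstrap).} With the gradient bound in hand, $g:=\bar W-\widetilde H(\nabla\bar v)$ satisfies $\|g\|_{L^\infty(\Omega''\times(0,\tau))}\le K(C_1^2+1)+K=:C_2$, and $\bar v$ solves the \emph{linear} equation $\bar v_s-\Delta\bar v=g$. Cover $[0,\tau]$ by overlapping time-windows of unit length (the one adjacent to $s=0$ treated with the $C^2$ initial trace). Interior parabolic $L^p$ (Calder\'on--Zygmund) estimates bound $\|\bar v\|_{W^{2,1}_p(\Omega'\times(s_0,s_0+1))}$, for every $p<\infty$, by a constant depending only on $C_2,K,p,\Omega',\Omega$ — in particular \emph{independent of $\tau$}, since over a bounded time-window $\|g\|_{L^p}$ is controlled by $C_2$ times a fixed volume factor. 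Choosing $p>N+2$ and using the embedding $W^{2,1}_p\hookrightarrow C^{1,\beta}$ with $\beta=1-(N+2)/p$ gives a uniform bound for $\|\bar v\|_{C^{1,\beta}}$ on each window; patching the windows yields the bound on $\Omega'\times(0,\tau)$, and undoing the time reversal gives the conclusion for $v$.

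The step I expect to be the main obstacle is Step 1: obtaining the interior gradient bound with a constant \emph{uniform in $\tau$}, under only an $L^\infty$ control on $W$ (leaving no room to differentiate it) and with $\widetilde H$ of critical, quadratic growth. This is precisely the situation the Bernstein-type mechanism is built for — the data entering differentiation-free and the Bochner term absorbing the quadratic Hamiltonian — and the $\tau$-uniformity comes from the estimate being driven by a spatial cut-off and by the uniformly $C^2$ terminal trace, so that nothing in it degenerates as $\tau\to\infty$.
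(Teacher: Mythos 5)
Your proposal is a genuinely different route from the paper's: the paper proves this Proposition by a one-line citation to the classical interior $C^{1,\beta}$ estimate for quasilinear parabolic equations with natural (at most quadratic in the gradient) growth, \cite[Theorem~V.3.1]{lady}, whereas you sketch a from-scratch derivation (time reversal, Bernstein-type gradient bound, linear bootstrap with time-window patching). Your Step~2 is correct, standard, and records the $\tau$-uniformity explicitly, which the citation leaves tacit.

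The gap is in Step~1, and it is a real one for $\widetilde H$ of critical quadratic growth as the statement allows. After differentiating the equation, testing with $\zeta^{2p}|\nabla\bar v|^{2p-2}\partial_k\bar v$, and integrating by parts, the Hamiltonian contribution produces a term of size $\sim\frac1p\int\zeta^{2p}\|D^2\widetilde H\|\,|D^2\bar v|\,|\nabla\bar v|^{2p}$, and Young's inequality against the Bochner term $-\int\zeta^{2p}|\nabla\bar v|^{2p-2}|D^2\bar v|^2$ leaves a supercritical residual $\sim\int\zeta^{2p}|\nabla\bar v|^{2p+2}$ that the Bochner term cannot cancel; the assertion that the second-order term absorbs the quadratic Hamiltonian on its own is not correct. (The ``eliminate $\Delta\bar v$'' idea would deliver a compensating $-c\int\zeta^{2p}|\nabla\bar v|^{2p+2}$ only under a \emph{lower} quadratic bound on $\widetilde H$, which the hypotheses do not provide.) The classical proofs close precisely by exploiting $\|v\|_{L^\infty}\le K$ — via a Ladyzenskaja--Ural'tseva change of unknown $v=\psi(w)$ with $\psi''/\psi'$ large, or an exponential weight $e^{\lambda\bar v}$ in the test function — to generate a good-signed quartic term that dominates the residual. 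You list $\|v\|_{L^\infty}\le K$ among the hypotheses but never invoke it in the gradient bound, and for the quadratic borderline this is exactly what makes the argument fail as written. (In the paper's actual use, Lemma~\ref{lem_infty}, the rescaled $\widehat H_\eta$ has strictly subquadratic growth, so the residual there would be subcritical and the Bernstein step softer; the Proposition, however, is stated for the borderline case.) A smaller issue: a closed differential inequality for $\int\zeta^{2p}|\nabla\bar v|^{2p}$ is an $L^{2p}$ gradient bound, not $L^\infty$; you still need a Moser-type iteration in $p$, or should feed the $L^q$ gradient bound (with $q$ large) directly into the linear bootstrap of Step~2.
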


\begin{proof} See \cite[Theorem V.3.1]{lady}.
\end{proof}

\subsection{Regularity of the Fokker-Planck equation} 

In what follows, let $m \in L^1(Q)$ and $A$ be a measurable vector field such that $m$ is a weak solution of  the Fokker-Planck equation 
\begin{equation}\label{FKP}
\begin{cases}
m_t - \Delta m + \diverg(A \, m)  = 0 & \text{in $Q$,}\\
m(x,0) = m_0(x) & \text{on $\T$}.
\end{cases}
\end{equation}
Suppose  $|A|^{\gamma'} m \in L^1(Q)$, for a  $\gamma'>1$, and set $E$ to be the quantity \[E:= \int_Q |A|^{\gamma'} m\, dxdt. \]
In this section we will state some regularity results and a-priori estimates for $m$, depending on $E$, inspired by Metafune, Pallara, Rhandi \cite{MPR}, that will be used in the sequel. We stress that our aim here is to obtain regularity of $m$ in terms of $|A|m^{1/\gamma'}$, rather than in terms of $|A|$ itself. The former quantity is indeed associated to the energy of the system. The exponent $\gamma'$ and $\alpha$ used in this section are not necessarily linked  with the ones given by the hypotheses on $H$ and $f$ (unless otherwise specified).

Note that, by setting $w:=Am$ on the set $\{m > 0 \}$, and $w \equiv 0$ where $m$ vanishes, the couple $(m,w)$ solves
\[
\begin{cases}
m_t - \Delta m + \diverg(w)  = 0 & \text{in $Q$,}\\
m(x,0) = m_0(x) & \text{on $\T$},
\end{cases}
\]
and $E$ can be rewritten as $E= \int_Q |w/m|^{\gamma'} m\, dxdt.$

\begin{prop}\label{GN}  Let $m \in L^p(Q)$, for some $p> 1$, be  a weak solution of \eqref{FKP}, and $r$ be such that
\begin{equation}\label{rdef}
\frac{1}{r} := \frac{1}{\gamma'} + \left(1- \frac{1}{\gamma'} \right) \frac{1}{p}.
\end{equation}

Then, there exist $C > 0$, depending on $T, p, N, \gamma'$ and $\|\nabla m_0\|_{L^\infty(\T)}$, such that
\begin{equation}\label{Hnorm}
\|m\|_{\H^{r,1}(Q)} \le C(E^{1/\gamma'} \|m \|_{L^{p}(Q)}^{1/\gamma} + 1 ),
\end{equation}
where $\gamma$ is the conjugate exponent of $\gamma'$.
\end{prop}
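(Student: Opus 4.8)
The plan is to estimate $\|m\|_{\H^{r,1}(Q)}$ by controlling separately the spatial part in $W^{1,0}_r(Q)$ and the time derivative in $(W^{1,0}_{r'}(Q))'$, using the equation $m_t - \Delta m + \diverg(w) = 0$ with $w = Am$. First I would write $w = (w\, m^{-1/\gamma'})\, m^{1/\gamma'}$ on $\{m>0\}$ and apply H\"older's inequality with exponents $\gamma'$ and $\gamma$ to get $\|w\|_{L^r(Q)} \le E^{1/\gamma'}\|m\|_{L^{q}(Q)}^{1/\gamma}$ for the appropriate $q$; the definition \eqref{rdef} of $r$ is exactly the one that makes $q = p$, since $\tfrac1r = \tfrac1{\gamma'} + (1-\tfrac1{\gamma'})\tfrac1p$ means $w \in L^r(Q)$ with the stated bound once $m \in L^p(Q)$. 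Thus the "source" $\diverg(w)$ lives in $W^{-1,r}$ in space with a norm controlled by $E^{1/\gamma'}\|m\|_{L^p}^{1/\gamma}$.

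Next I would invoke the linear parabolic regularity theory for the Fokker-Planck (heat) equation with a divergence-form right-hand side: for $m_t - \Delta m = -\diverg(w)$ with $m(\cdot,0) = m_0$, one has the maximal-regularity-type estimate
\[
\|m\|_{W^{1,0}_r(Q)} + \|m_t\|_{(W^{1,0}_{r'}(Q))'} \le C\big(\|w\|_{L^r(Q)} + \|m_0\|_{W^{1,r}(\T)}\big),
\]
which is the natural energy estimate in the $\H^{r,1}$ scale (testing the equation against suitable functions, or quoting the relevant result in \cite{MPR}; note $\|\nabla m_0\|_{L^\infty(\T)}$ controls $\|m_0\|_{W^{1,r}(\T)}$ on the torus). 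Combining this with the H\"older bound on $\|w\|_{L^r(Q)}$ yields
\[
\|m\|_{\H^{r,1}(Q)} \le C\big(E^{1/\gamma'}\|m\|_{L^p(Q)}^{1/\gamma} + \|\nabla m_0\|_{L^\infty(\T)} + 1\big),
\]
which is \eqref{Hnorm} after absorbing the $m_0$-dependent term into the constant $C$ and the additive $+1$.

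The main technical point to be careful about is the precise functional-analytic setting in which the parabolic estimate holds: one needs that for $1 < r$ the heat semigroup gives a bounded solution operator from data $(\,\diverg w,\, m_0)$ with $w \in L^r(Q)$, $m_0 \in W^{1,r}(\T)$ into $\H^{r,1}(Q)$, uniformly in $T$ on bounded time intervals (hence the dependence of $C$ on $T$). This is where the choice of the space $\H^{r,1}(Q)$ — with $m_t$ measured in the dual $(W^{1,0}_{r'}(Q))'$ rather than in $L^r$ — is essential, since $\diverg(w)$ is only a distribution of order one in space; the estimate is then of the "very weak / transposition" type rather than a genuine $W^{2,1}_r$ estimate, and I would cite the corresponding statement in \cite{MPR} to justify it, taking care that $m \in L^p(Q)$ is assumed a priori so that $w$ is a well-defined $L^r$ field and the computation is not merely formal. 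A secondary point is the degenerate set $\{m = 0\}$, handled by the convention $w \equiv 0$ there, consistent with the footnote convention on $mL(-w/m)$, so that $E = \int_Q |w/m|^{\gamma'} m \,dxdt$ and the H\"older splitting above make sense.
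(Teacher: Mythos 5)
Your overall strategy is sound and yields the same estimate as the paper, but it relies on a black‑boxed maximal‑regularity step that the paper in fact proves from scratch, so the two routes differ in what is taken for granted. Your H\"older computation is correct: with $1/r = 1/\gamma' + 1/(\gamma p)$, splitting $|w|^r = (|A|m^{1/\gamma'})^r\,m^{r/\gamma}$ and applying H\"older with exponents $\gamma'/r$ and its conjugate indeed gives $\|w\|_{L^r(Q)}\le E^{1/\gamma'}\|m\|_{L^p(Q)}^{1/\gamma}$. Where you and the paper diverge is the next step. You invoke, as a known theorem, the estimate
\[
\|m\|_{\H^{r,1}(Q)} \le C\bigl(\|w\|_{L^r(Q)} + \|m_0\|_{W^{1,r}(\T)}\bigr)
\]
for $m_t-\Delta m = -\diverg w$, attributing it to \cite{MPR}. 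That estimate is true, but \cite{MPR} is cited in the paper only for parabolic embedding lemmas (their Theorem~A.3), not for a divergence‑form maximal‑regularity statement, so the citation is not safe as written. The paper instead proves the key gradient bound $\|\nabla m\|_{L^r(Q)}\le C(\|w\|_{L^r(Q)}+1)$ directly by a duality argument: it tests the weak formulation against $\varphi=\partial_{x_i}\psi$, where $\psi$ solves the backward heat equation $-\psi_t-\Delta\psi=|\partial_{x_i}m|^{r-2}\partial_{x_i}m$ with $\psi(T)=0$, and uses classical $W^{2,1}_{r'}$ maximal regularity for $\psi$ to close the estimate; the $L^r$ bound on $m$ then follows by Poincar\'e (using $\int_\T m(\cdot,t)\,dx=1$), and the bound on $m_t$ in $(W^{1,0}_{r'}(Q))'$ follows exactly as you describe, by pairing the equation with a test function. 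In short, your step 2 is the content of the proof, not a lemma one can cite and be done. Two further points worth incorporating: (a) the paper first argues for smooth $m$ and then removes this hypothesis by a space–time mollification argument, noting $E_n\to E$ (quoting \cite{CardaGraber}); your proof should include, or at least acknowledge, such an approximation step since the H\"older splitting and the duality pairing are a priori only formal for a merely weak solution; (b) the boundary term arising from $m_0$ at $t=0$ in the duality argument is estimated via $\psi(x,0)=-\int_0^T\psi_t\,ds$ together with $\|\psi_t\|_{L^{r'}}\le\|\psi\|_{W^{2,1}_{r'}}$, which is why it is $\|\nabla m_0\|_{L^\infty(\T)}$ (rather than a $W^{1,r}$ norm) that enters the constant; this is a small but genuine difference from the way you absorb the initial datum.
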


\begin{proof} We first assume that $m$ is a smooth function; we will remove this requirement at the end of the proof. Let $\varphi \in C^{2,1}(\overline{Q})$ be a test function such that $\varphi(\cdot, T) = 0$. Then, being $m$ a weak solution,
\[
\int_Q m(-\varphi_t - \Delta \varphi - A \cdot \nabla \varphi)\, dxdt = \int_\T m_0(x) \varphi(x,0) dx.
\]
Hence,
\[
\left| \int_Q m(-\varphi_t - \Delta \varphi)\, dxdt \right| \le \left| \int_\T m_0(x) \varphi(x,0) dx \right| + \int_Q |A|m^{1/\gamma'} m^{1-1/\gamma'}|\nabla \varphi|\, dxdt,
\]
and, applying  twice H\"older inequality in the last term, we have
\begin{equation}\label{eq1}
\left| \int_Q m(-\varphi_t - \Delta \varphi)\, dxdt \right| \le \left| \int_\T m_0(x) \varphi(x,0) dx \right| + E^{1/\gamma'} \|m \|_{L^{p}(Q)}^{1/\gamma} \|\nabla \varphi \|_{L^{r'}(Q)},
\end{equation}
where, $r$ is as in \eqref{rdef}.

Let now $i = 1, \ldots, N$ be fixed, and $\psi \in C^{2,1}(Q)$ be the solution of 
\begin{equation}\label{eqpsi}
\begin{cases}
-\psi_t - \Delta \psi  = |\partial_{x_i} m|^{r-2}\partial_{x_i} m & \text{in $Q$,}\\
\psi(x,T) = 0 & \text{on $\T$}.
\end{cases}
\end{equation}
Note that, by standard elliptic regularity,
\begin{equation}\label{eq11}
\| \psi \|_{W^{2,1}_{r'}(Q)} \le C \|\, |\partial_{x_i} m|^{r-1} \, \|_{L^{r'}(Q)} = C \| \partial_{x_i} m  \|_{L^{r}(Q)}^{r-1},
\end{equation}
where $r'$ is the conjugate exponent of $r$.
Moreover, $\psi(x,0) = -\int_0^T \psi_t(x,s) \, ds$ on $\T$, therefore, using H\"older inequality,
\begin{align}\label{eq2}
\left| \int_\T \partial_{x_i}m_0(x) \psi(x,0) \, dx \right| &\le \int_Q |\partial_{x_i}m_0(x) \psi_t(x,s) |\, dxds \le T^{1/r} \| \partial_{x_i}m_0 \|_{L^r(\T)} \| \psi_t \|_{L^{r'}(Q)} \nonumber \\ &\le C \| \psi \|_{W^{2,1}_{r'}(Q)}.
\end{align}
If we now let $\varphi = \partial_{x_i}\psi$ in \eqref{eq1}, integrating by parts, we obtain
\[
\left| \int_Q \partial_{x_i}m (-\psi_t - \Delta \psi)\, dxdt \right| \le \left| \int_\T \partial_{x_i} m_0(x) \psi(x,0) dx \right| + E^{1/\gamma'} \|m \|_{L^{p}(Q)}^{1/\gamma} \|\nabla (\partial_{x_i} \psi)  \|_{L^{r'}(Q)},
\]
and by \eqref{eq2} and the fact that $\psi$ solves \eqref{eqpsi},
\[
\int_Q |\partial_{x_i}m|^r \, dxdt  = \left| \int_Q \partial_{x_i}m (-\psi_t - \Delta \psi)\, dxdt \right| \le C \| \psi \|_{W^{2,1}_{r'}(Q)} (1 + E^{1/\gamma'} \|m \|_{L^{p}(Q)}^{1/\gamma}).
\]
Hence, using again \eqref{eq11},
\begin{equation}\label{eq3}
\|\nabla m\|^r_{L^r(Q)} = \int_Q |\nabla m|^r \, dxdt \le C(E^{r/\gamma'} \|m \|_{L^{p}(Q)}^{r/\gamma} + 1 ).
\end{equation}
By Poincar\'e inequality and \eqref{eq11}, since $\frac{1}{|Q|}\int_Q m \, dx dt = 1$, we infer
\begin{equation}\label{eq31}
\|m\|_{L^r(Q)} \le \|m - 1\|_{L^r(Q)} + T \le C(E^{1/\gamma'} \|m \|_{L^{p}(Q)}^{1/\gamma} + 1 ).
\end{equation}

Finally, we multiply the Fokker-Planck equation by any test function (that may not vanish at time $T$) $\varphi \in C^{2,1}(\overline{Q})$ to get
\begin{align*}
\left| \int_Q m_t \varphi \, dxdt \right| &\le \left| \int_Q \nabla m \cdot \nabla \varphi \, dx dt \right| + E^{1/\gamma'} \|m \|_{L^{p}(Q)}^{1/\gamma} \|\nabla \varphi \|_{L^{r'}(Q)} \\ &\le (\|\nabla m\|_{L^r(Q)}+ E^{1/\gamma'} \|m \|_{L^{p}(Q)}^{1/\gamma}) \|\nabla \varphi \|_{L^{r'}(Q)},
\end{align*}
where  H\"older inequality has been used for the integral of $A\cdot \nabla \varphi m$ as in \eqref{eq1}. Hence 
$$\| m_t\|_{(W^{1,r'}(Q))'} \le C(E^{1/\gamma'} \|m \|_{L^{p}(Q)}^{1/\gamma} + 1 ),$$
 and by \eqref{eq3} and \eqref{eq31} we conclude.

If $m$ is not smooth, consider a regularised sequence $(m_n, w_n) := (m \star \xi_n, w \star \xi_n)$, where $ \xi_n$ is a (space-time) smoothing kernel. Then, $(m_n)_t - \Delta m_n + \diverg(w_n)  = 0$ holds, and \eqref{Hnorm} is verified with $E_n = \int_Q |w_n/m_n|^{\gamma'} m_n\, dxdt$. Since $E_n \to E$ as $n \to \infty$ (see \cite{CardaGraber}, Lemma 2.7), we obtain \eqref{Hnorm} in the general case.

\end{proof}

The following is a crucial estimate that links the energy term $\left( \int_Q m^{\alpha+1} \, dxdt \right)^\delta$, for a $\delta>1$, with the quantity $E$. The assumption $\alpha <  \gamma'/N$ basically allows to apply the Gagliardo-Nirenberg and interpolation inequalities.

\begin{prop}\label{GN2} 
Suppose that $m \in L^{\alpha+1}(Q)$ for some $0\leq \alpha <  \gamma'/N$. Then, there exist $C > 0$ and $\delta > 1$, depending on $T, \alpha, N, \gamma$ and $\|\nabla m_0\|_{L^\infty(\T)}$, such that
\begin{equation}\label{EI}
\left( \int_Q m^{\alpha+1} \, dxdt \right)^\delta \le C \left( E + 1\right).
\end{equation}
\end{prop}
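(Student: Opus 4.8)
The plan is to bootstrap: feed the hypothesis $m\in L^{\alpha+1}(Q)$ into Proposition~\ref{GN} to control $\nabla m$ in $L^r(Q)$, turn this into a bound on $\int_Q m^{\alpha+1}$ via a space Gagliardo--Nirenberg inequality together with Hölder in time, and finally absorb. Throughout I set $M:=\|m\|_{L^{\alpha+1}(Q)}$, which is \emph{finite} by hypothesis — this is essential, because $M$ will appear on both sides of the estimate before being absorbed. The case $\alpha=0$ is trivial, since then $\int_Q m\,dxdt=T$ by mass conservation and $E\ge 0$; so I assume $\alpha>0$.

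First I would apply Proposition~\ref{GN} with $p=\alpha+1$, so that $r=\tfrac{\gamma'(\alpha+1)}{\alpha+\gamma'}>1$ (this is \eqref{rdef} with $p=\alpha+1$); estimate \eqref{eq3}, after taking $r$-th roots, gives $\|\nabla m\|_{L^r(Q)}\le C(E^{1/\gamma'}M^{1/\gamma}+1)$. Next, since $m\ge 0$ solves \eqref{FKP} with $\int_\T m_0=1$, testing against spatial constants yields $\int_\T m(\cdot,t)\,dx=1$ for a.e.\ $t$. Now apply, for a.e.\ fixed $t$, the Gagliardo--Nirenberg inequality on $\T$ with target exponent $\alpha+1$,
\[
\|m(\cdot,t)\|_{L^{\alpha+1}(\T)}\le C\|\nabla m(\cdot,t)\|_{L^{r}(\T)}^{\theta}\|m(\cdot,t)\|_{L^{1}(\T)}^{1-\theta}+C\|m(\cdot,t)\|_{L^{1}(\T)},
\]
with $\theta\in(0,1)$ determined by $\tfrac1{\alpha+1}=\theta(\tfrac1r-\tfrac1N)+(1-\theta)$; a short computation shows $\alpha<\gamma'/N$ guarantees $\tfrac1r-\tfrac1N\le\tfrac1{\alpha+1}$, which is exactly what makes this admissible. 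Using $\|m(\cdot,t)\|_{L^1(\T)}=1$, raising to the power $\alpha+1$ and integrating in $t$,
\[
\int_Q m^{\alpha+1}\,dxdt\le C\int_0^T\|\nabla m(\cdot,t)\|_{L^r(\T)}^{(\alpha+1)\theta}\,dt+CT.
\]

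The crucial point is that $\alpha<\gamma'/N$ is in fact \emph{equivalent} to $(\alpha+1)\theta<r$ (both amount to $r>\tfrac{N(\alpha+1)}{N+1}$): this legitimates Hölder in time in the last integral, and inserting the bound from Proposition~\ref{GN} one obtains
\[
M^{\alpha+1}=\int_Q m^{\alpha+1}\,dxdt\le C\bigl(\|\nabla m\|_{L^r(Q)}^{(\alpha+1)\theta}+1\bigr)\le C\bigl(E^{(\alpha+1)\theta/\gamma'}M^{(\alpha+1)\theta/\gamma}+1\bigr).
\]
Since $(\alpha+1)\theta/\gamma<\alpha+1$ (as $\theta<1<\gamma$), a case distinction on whether $M\le1$ or $M>1$, together with Young's inequality, absorbs the factor $M^{(\alpha+1)\theta/\gamma}$ and gives $M^{\alpha+1}\le C(E^{\beta}+1)$ with $\beta=\dfrac{(\alpha+1)\theta/\gamma'}{1-\theta/\gamma}$.

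Finally one checks $\beta<1$: this is the same as $\theta(1+\tfrac\alpha{\gamma'})<1$, which by the formula for $\theta$ reduces, once more, to $\alpha<\gamma'/N$. Hence $\delta:=1/\beta>1$, and raising the previous inequality to the power $\delta$ yields $\bigl(\int_Q m^{\alpha+1}\,dxdt\bigr)^{\delta}\le C(E+1)$, with $C$ and $\delta$ depending on $T,\alpha,N,\gamma$ and (through Proposition~\ref{GN}) on $\|\nabla m_0\|_{L^\infty(\T)}$, as claimed. The main obstacle is precisely this triple bookkeeping of exponents: the \emph{single} hypothesis $\alpha<\gamma'/N$ must simultaneously make the Gagliardo--Nirenberg exponent admissible ($\theta\le1$), make the Hölder step in time legitimate ($(\alpha+1)\theta<r$), and force a \emph{strictly} sublinear power of $E$ ($\beta<1$, i.e.\ $\delta>1$). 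At the critical value $\alpha=\gamma'/N$ one only gets $\beta=1$, $\delta=1$, consistently with $\Ecal$ ceasing to be bounded below there.
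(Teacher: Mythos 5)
Your proposal is correct and follows essentially the same route as the paper: both feed the $L^{\alpha+1}$-bound on $m$ into Proposition~\ref{GN} to control $\nabla m$ in $L^r(Q)$ with $r$ given by \eqref{rdef}, apply a slice-wise Gagliardo--Nirenberg inequality using $\|m(\cdot,t)\|_{L^1(\T)}=1$, and then absorb the $\|m\|_{L^{\alpha+1}(Q)}$ factor on the right, with $\alpha<\gamma'/N$ delivering exactly $\delta>1$. The only difference is bookkeeping: the paper picks the special target exponent $\eta=r(N+1)/N$ so that the slice-wise Gagliardo--Nirenberg integrates over $(0,T)$ without any H\"older in time, and then interpolates $L^{\alpha+1}(Q)$ between $L^1(Q)$ and $L^{\eta}(Q)$, whereas you run Gagliardo--Nirenberg directly at exponent $\alpha+1$ and use H\"older in time in its place --- two equivalent ways of closing the same bootstrap.
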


\begin{proof}
Since $\|m\|_{L^1(Q)} = T$, if $\alpha = 0$, we are done.  Suppose then $\alpha>0$.
Let $r$ be defined as in \eqref{rdef} for $p=\alpha+1$. By the Gagliardo-Nirenberg inequality, there exists $C > 0$ such that for all $t \in (0,T)$,
\[
\int_\T m^\eta (x,t)\, dx \le C\left(\int_\T |\nabla m(x,t)|^r \, dx + 1 \right),
\]
where $\eta: = r(N+1)/N$ and we used the fact that  the $L^1(\T)$ norm of $m(\cdot, t)$ is equal to one for all $t$. If we now integrate the previous equality over $(0,T)$, in view of \eqref{Hnorm}, it follows that
\[
\|m\|^{(N+1)/N}_{L^{\eta}(Q)}  \le C( \|m\|^{1/\gamma}_{L^{\alpha+1}(Q)} E^{1/\gamma'} + 1).
\]
It is now crucial to observe that $\alpha <  \gamma'/N$ implies $1 < \alpha+1< \eta$. Arguing by interpolation, there exists some $0 < \theta < 1$ such that $\|m\|_{L^{\alpha+1}(Q)} \le C\|m\|_{L^{\eta}(Q)}^\theta$, so
\[
\|m\|^{(N+1)/(\theta N)}_{L^{\alpha+1}(Q)}  \le C( \|m\|^{1/\gamma}_{L^{\alpha+1}(Q)} E^{1/\gamma'} + 1),
\]
which implies
\[
\|m\|^{1 + \gamma' (N+1)/(\theta N) - \gamma'}_{L^{\alpha+1}(Q)}  \le C(E + 1).
\]
The inequality \eqref{EI} then follows setting $\delta:= \frac{ 1+ \gamma' (N+1)/(\theta N) - \gamma'   } {\alpha+1 }$. Note that   $\delta>1$ since $ \gamma' (N+1)/(\theta N) - \gamma' \ge \gamma'/N > \alpha$.

\end{proof}

We next state a proposition that gives a bound on the $\H^{r,1}(Q)$ norm of the solution of the Fokker-Planck equation in terms of $E$, for certain values of $r$ that depends on $\gamma'$ and $N$. 

\begin{prop}\label{Calphaest} Suppose that $E \le K$ and $\|m\|_{L^{p_0}(Q)} \le K$, for some $K > 0$,
\begin{equation*}\label{p0range}
\begin{cases}
p_0 \in \left(1, \frac{N+2}{N+2-\gamma'}\right) & \text{if $\gamma' < N+2$}, \\
p_0 \in (1, +\infty) & \text{if $\gamma' \geq  N+2$}.
\end{cases}
\end{equation*}

Let $r$ be such that 
\begin{equation}\label{rrange}
\begin{cases}
r \in \left(1, \frac{N+2}{N+3-\gamma'}\right) & \text{if $\gamma' < N+2$}, \\
r \in (1, \gamma') & \text{if $\gamma' \geq  N+2$}.
\end{cases}
\end{equation}
Then, there exists $C > 0$ depending on $K, r, T, N, \gamma'$ such that 
\[
\|m\|_{\H^{r,1}(Q)} \le C.
\]
\end{prop}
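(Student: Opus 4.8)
The plan is to bootstrap the estimate of Proposition~\ref{GN} by iteration. The point of Proposition~\ref{GN} is that if $m \in L^p(Q)$ with a bound, then $m \in \H^{r,1}(Q)$ with $r$ given by \eqref{rdef}, hence (by the embedding Proposition~\ref{parab_embed}) $m$ lies in a better Lebesgue space; feeding this back into Proposition~\ref{GN} improves the exponent, and one wants to show the iteration reaches the target range \eqref{rrange}. So first I would set up the iteration map: starting from $p_0$ as in the hypothesis, define $r_0$ by \eqref{rdef} with $p=p_0$, then by Proposition~\ref{parab_embed}(i) (when $r_0 < N+2$) we gain $m \in L^{p_1}(Q)$ with $p_1 = \frac{(N+2)r_0}{N+2-r_0}$ (or any $p_1$ finite if $r_0 \ge N+2$), and iterate: $r_{k+1}$ defined by \eqref{rdef} with $p=p_k$, and $p_{k+1} = \frac{(N+2)r_k}{N+2-r_k}$. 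At each stage the constant in $\|m\|_{\H^{r_k,1}(Q)} \le C(E^{1/\gamma'}\|m\|_{L^{p_k}(Q)}^{1/\gamma}+1)$ depends only on $K, T, N, \gamma', \|\nabla m_0\|_{L^\infty}$, and since $E \le K$ and $\|m\|_{L^{p_k}} $ is controlled by the previous step, all the $\H^{r_k,1}$ norms stay bounded by a constant of the claimed form.

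The heart of the matter is the arithmetic of the sequences $(r_k)$ and $(p_k)$: I need to check that $p_k \to p_\infty$ where $p_\infty$ is (the endpoint) $\frac{N+2}{N+2-\gamma'}$ when $\gamma'<N+2$ (resp.\ $p_k \to \infty$ when $\gamma' \ge N+2$), so that after finitely many steps $p_k$ exceeds any prescribed value in the admissible range and correspondingly $r_k$ enters the interval \eqref{rrange}. Combining \eqref{rdef}, $\frac1{r_k} = \frac1{\gamma'} + (1-\frac1{\gamma'})\frac1{p_k}$, with $\frac1{p_{k+1}} = \frac1{r_k} - \frac1{N+2}$, one gets a single affine recursion for $x_k := 1/p_k$, namely $x_{k+1} = \frac1{\gamma'} - \frac1{N+2} + (1-\frac1{\gamma'}) x_k$; its fixed point is $x_\infty = \big(\frac1{\gamma'}-\frac1{N+2}\big)\big/\frac1{\gamma'} = 1 - \frac{\gamma'}{N+2} = \frac{N+2-\gamma'}{N+2}$, and since the multiplier $1-\frac1{\gamma'} \in (0,1)$, $x_k \downarrow x_\infty$ monotonically (when $x_0 > x_\infty$) — i.e.\ $p_k \uparrow \frac{N+2}{N+2-\gamma'}$, exactly the endpoint appearing in the hypothesis, and $x_\infty \le 0$ precisely when $\gamma' \ge N+2$, the regime where $p_k$ blows up. So for $\gamma'<N+2$, given any target $r^* < \frac{N+2}{N+3-\gamma'}$ — which corresponds via \eqref{rdef} to a target $p^* < \frac{N+2}{N+2-\gamma'}$ — finitely many iterations produce $p_k \ge p^*$ and we stop; for $\gamma' \ge N+2$ finitely many iterations push $p_k$ above any finite value, after which $r_k$ can be taken up to $\gamma'$ (the constraint $r<\gamma'$ coming from \eqref{rdef} as $p\to\infty$, since then $1/r \to 1/\gamma'$). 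One subtlety: the hypothesis only gives $\|m\|_{L^{p_0}}\le K$, and $p_0$ may already be close to the endpoint; but that is fine, the iteration only needs to move $p$ upward, and if $p_0$ is already large enough we skip the iteration entirely. A second subtlety is that when some intermediate $r_k$ happens to equal or exceed $N+2$, one uses Proposition~\ref{parab_embed}(ii) to gain all finite exponents in one step and the process terminates immediately.

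The main obstacle I anticipate is purely bookkeeping: one must verify that the iteration indeed \emph{increases} $p_k$ at every step (equivalently $x_{k+1} < x_k$, which holds iff $x_k > x_\infty$, so one needs $p_0 < \frac{N+2}{N+2-\gamma'}$ strictly, guaranteed by the hypothesis on $p_0$), and that each $r_k$ produced along the way is admissible for the embedding used at that stage (i.e.\ $r_k > 1$, which follows since $x_k \in (0,1)$ forces $1/r_k = 1/\gamma' + (1-1/\gamma')x_k < 1$). Once the two sequences are understood, the estimate $\|m\|_{\H^{r,1}(Q)} \le C$ for the final admissible $r$ follows by composing the finitely many applications of Proposition~\ref{GN} and Proposition~\ref{parab_embed}, tracking that every constant depends only on $K, r, T, N, \gamma'$ (and on $\|\nabla m_0\|_{L^\infty}$, which is fixed by the data). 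Finally, to reach every $r$ in the open interval \eqref{rrange} rather than just a particular one, note that a bound in $\H^{r,1}(Q)$ for the largest reachable exponent gives, via interpolation and Hölder, a bound for all smaller $r$ as well, since $Q$ has finite measure.
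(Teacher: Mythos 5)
Your proposal is correct and follows essentially the same route as the paper: the same iteration $r_k$ from $p_k$ via \eqref{rdef}, $p_{k+1} = \frac{(N+2)r_k}{N+2-r_k}$ via Proposition~\ref{parab_embed}, analysis of the resulting affine recursion and its fixed point, and termination in finitely many steps. Your extra care about the final pass (reaching a given $r$ by stopping once $p_k$ exceeds the corresponding $p^*$ and downgrading by H\"older on the finite-measure $Q$) is a sound way to make precise what the paper leaves implicit.
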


\begin{proof}  Let the sequences $p_n, r_n$ be defined by induction as follows: for a given $p_n$, let $r_n$ be such that
\[
\frac{1}{r_n} := \frac{1}{\gamma'} + \left(1- \frac{1}{\gamma'} \right) \frac{1}{p_n}> \frac{1}{\gamma'} .
\]
Moreover, $p_{n+1} :=  \frac{(N+2)r_n}{N+2 - r_n} $, i.e.
\[
\frac{1}{p_{n+1}} = \frac{1}{r_n}  - \frac{1}{N+2}  =  \frac{1}{\gamma'}  - \frac{1}{N+2} + \left(1- \frac{1}{\gamma'} \right) \frac{1}{p_n}.
\]

We use here the convention  $ \frac{N+2-\gamma'}{(N+2)}=0$, when $\gamma'\geq N+2$. Since $1 < p_0 < \frac{N+2}{N+2-\gamma'}$, $r_n, p_n$ are  increasing sequences. Indeed, we have
$$
\frac{p_{n}}{p_{n+1}}=  \frac{N+2-\gamma'}{\gamma' (N+2)} p_{n} -\frac{1}{\gamma'}+ 1<1$$
as soon as $\gamma'\geq N+2$,  or as $p_{n}< \frac{N+2}{N+2-\gamma'}$.

In the case  $\gamma' <  N+2$, $p_n$ converges to $\frac{N+2}{N+2-\gamma'}$, while $r_n$ converges to $\frac{N+2}{N+3-\gamma'}$.
By Proposition \ref{parab_embed} and Proposition \ref{GN}, we have that
\[
\|m\|_{L^{p_{n+1}}(Q)} \le C \|m\|_{\H^{r_n,1}(Q)} \le C_1(E^{1/\gamma'} \|m \|_{L^{p_n}(Q)}^{1/\gamma} + 1 ),
\]
so we obtain the assertion by iterating the last inequality a finite number of times.

As for the case $\gamma' \geq  N+2$, one argues in a similar way, with the difference that  $p_n \to +\infty$ and $r_n \to \gamma'$.

\end{proof}
Together with the embedding results  of Proposition  \ref{parab_embed} and Proposition \ref{parab_embed_comp}, Proposition \ref{Calphaest} allows to prove the strong convergence of a (sub)sequence of weak solutions $m_n$, as shown in the following  corollary.

\begin{cor}\label{comp} Suppose that $(m_n, A_n)$ is a sequence solving the Fokker-Planck equation \eqref{FKP} in the weak sense. Let 
\[
E_n:=\int_{Q} |A_n|^{\gamma'} m_n \,dxdt \le K
\]
and $\|m_n\|_{L^{p_0}(Q)} \le K$ for some $K > 0$, 
\begin{equation*}\label{p0range2}
\begin{cases}
p_0 \in \left(1, \frac{N+2}{N+2-\gamma'}\right) & \text{if $\gamma' < N+2$}, \\
p_0 \in (1, +\infty) & \text{if $\gamma' \geq  N+2$}.
\end{cases}
\end{equation*}
Then, up to subsequences, $m_n$ converges:
\begin{itemize} \item strongly in $L^p(Q)$ for any $p \in [1, \frac{N+2}{N+2-\gamma'})$, if $\gamma' < N+2$, 
\item strongly in $L^p(Q)$ for any $p \in [1, +\infty)$, if $\gamma' \geq N+2$, 
\item  in $C^{0,\theta}(Q)$ for some $\theta > 0$, if $\gamma' > N+2$. 
\end{itemize}
\end{cor}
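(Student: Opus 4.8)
The plan is to combine the a-priori estimate of Proposition~\ref{Calphaest} with the embedding results for $\H^{r,1}(Q)$ (Proposition~\ref{parab_embed} and Proposition~\ref{parab_embed_comp}), in order to upgrade the weak bounds on $(m_n, A_n)$ to strong compactness of $m_n$. First I would observe that the hypotheses $E_n \le K$ and $\|m_n\|_{L^{p_0}(Q)} \le K$ are precisely the ones required by Proposition~\ref{Calphaest}; hence, fixing any admissible exponent $r$ in the range \eqref{rrange}, that proposition yields a uniform bound $\|m_n\|_{\H^{r,1}(Q)} \le C$, with $C$ independent of $n$.

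Next I would split into the two cases according to the value of $\gamma'$. When $\gamma' < N+2$, one may choose $r$ close enough to $\frac{N+2}{N+3-\gamma'}$; then by Proposition~\ref{parab_embed}\textit{(i)} the space $\H^{r,1}(Q)$ continuously embeds into $L^{\frac{(N+2)r}{N+2-r}}(Q)$, and a short computation shows that $\frac{(N+2)r}{N+2-r}$ can be made arbitrarily close to $\frac{N+2}{N+2-\gamma'}$ as $r \uparrow \frac{N+2}{N+3-\gamma'}$. Since also $r < N+2$ in this regime, Proposition~\ref{parab_embed_comp} gives that $\H^{r,1}(Q)$ is compactly embedded in $L^q(Q)$ for every $q < \frac{(N+2)r}{N+2-r}$; therefore the uniformly bounded sequence $m_n$ has a subsequence converging strongly in $L^q(Q)$ for every $q < \frac{N+2}{N+2-\gamma'}$. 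By a diagonal argument over an increasing sequence of exponents $q_j \uparrow \frac{N+2}{N+2-\gamma'}$ one extracts a single subsequence that converges strongly in $L^p(Q)$ for all $p \in [1, \frac{N+2}{N+2-\gamma'})$, which is the first claim. When $\gamma' \ge N+2$, I would instead pick $r$ close to $\gamma'$, hence in particular one may take $r > N+2$ or $r$ as large as needed; if $\gamma' = N+2$ one still has $r < N+2$ and uses Proposition~\ref{parab_embed_comp} together with Proposition~\ref{parab_embed}\textit{(ii)} to get compactness into $L^q(Q)$ for all finite $q$, while if $\gamma' > N+2$ one takes $r$ with $N+2 < r < \gamma'$ and invokes Proposition~\ref{parab_embed}\textit{(iii)}: $\H^{r,1}(Q) \hookrightarrow C^{0,\nu}([0,T], C^{0,\theta}(\T))$, an embedding which is compact into $C^{0,\theta'}(Q)$ for slightly smaller $\theta'$ by Arzel\`a--Ascoli. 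This yields the second and third bullets, again after a diagonal extraction to cover the full range of exponents.

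The only genuinely delicate point is bookkeeping the exponents: one has to check that $\frac{(N+2)r}{N+2-r} \to \frac{N+2}{N+2-\gamma'}$ as $r \to \frac{N+2}{N+3-\gamma'}$ (equivalently $\frac1r \to \frac1{\gamma'} - \frac1{N+2} + \frac1{\gamma'}\cdot 0$, consistent with the fixed-point exponent $p_\infty$ of Proposition~\ref{Calphaest}), so that compactness can indeed be obtained up to — but not including — the limiting Lebesgue exponent, and that in the borderline case $\gamma' = N+2$ the compact embedding of Proposition~\ref{parab_embed_comp} is still available for $r$ slightly below $N+2$. Apart from this elementary but careful verification of index ranges and the standard diagonal/Arzel\`a--Ascoli extraction, there is no substantial obstacle: the heavy lifting has already been done in Proposition~\ref{Calphaest}.
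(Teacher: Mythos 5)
Your proof is correct and follows essentially the same route as the paper: apply Proposition~\ref{Calphaest} to get uniform $\H^{r,1}(Q)$ bounds, then invoke Proposition~\ref{parab_embed_comp} for the strong $L^p$ convergence (choosing $\bar r$ near the endpoint of the range \eqref{rrange} so that the Sobolev exponent $\frac{(N+2)\bar r}{N+2-\bar r}$ exceeds the desired $p$), and Proposition~\ref{parab_embed}\textit{(iii)} plus Arzel\`a--Ascoli for the H\"older compactness when $\gamma' > N+2$. The diagonal extraction you add to obtain one subsequence valid for all $p$ simultaneously is a mild and reasonable strengthening that the paper leaves implicit; the only blemish is the parenthetical ``$\frac{1}{r}\to\frac{1}{\gamma'}-\frac{1}{N+2}+\frac{1}{\gamma'}\cdot 0$'' in your final paragraph, which mixes up $r$, $q$, and the fixed-point exponent (the correct limits are $r\to\frac{N+2}{N+3-\gamma'}$ and $q\to\frac{N+2}{N+2-\gamma'}$), but this slip in an aside does not affect the argument.
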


\begin{proof} By Proposition \ref{Calphaest}, $m_n$ is bounded in ${\H^{r,1}(Q)}$, for all $r$ defined as in \eqref{rrange}. 

If $\gamma' < N+2$,  for any $p \in [1, \frac{N+2}{N+2-\gamma'})$, we can find  $\bar r\in  \left(1, \frac{N+2}{N+3-\gamma'}\right)$, $\bar r <\frac{N+2}{N+3-\gamma'}<N+2$, such that  $p \in [1, \frac{(N+2)\bar r}{N+2-\bar r})$ and $m_n$ is bounded in ${\H^{\bar r,1}(Q)}$. Hence it is sufficient to apply Proposition \ref{parab_embed_comp} to obtain
that $m_n$ converges strongly (up to subsequences) $L^p(Q)$.

If $\gamma' \geq  N+2$ then for any $p \in [1, +\infty)$ we can find  $\bar r \in (1, \gamma')$, $\bar  r< N+2$, such that  $p \in [1, \frac{(N+2)\bar r}{N+2-\bar r})$ and $m_n$ is bounded in ${\H^{\bar r,1}(Q)}$. Therefore we can apply Proposition \ref{parab_embed_comp} to obtain
that $m_n$ converges strongly (up to subsequences) in $L^p(Q)$.

If $\gamma' >  N+2$, we can find $\bar r \in (1, \gamma')$,  $\bar r>N+2$, such that $m_n$ is bounded in ${\H^{\bar r,1}(Q)}$. Hence by  Proposition \ref{parab_embed}, $m_n$ is bounded in some $C^{0,\nu}([0,T], C^{0,\theta'}(\T))$ (compactness follows by choosing $\theta < \min\{\theta',\nu\}$).

\end{proof}

\section{Existence of solutions}\label{existence}
In this section we discuss the existence of solution for the MFG system \eqref{MFG}. We begin giving the definition of weak solution.

Let $\alpha,\gamma$ be the exponents defined by the hypotheses on $H$ and $f$,
$\alpha < \frac{\gamma'}{N}.$ Let 
$$ q:=\left\{\begin{array}{ll} \frac{\gamma (\alpha+1)(1+N)}{\alpha N-\gamma}& \text{ if } \alpha > \frac{\gamma}{N}\\ +\infty &\text{ if } \alpha < \frac{\gamma}{N}.\end{array}\right.$$
\begin{defn}\label{def:weaksolMFG} We say that a pair $(u,m)\in L^q(Q) \times L^{\alpha+1}(Q)$ is a weak solution to \eqref{MFG}, if 
\begin{itemize}
\item[(i)] the following integrability conditions hold:
$$ \nabla u\in L^\gamma(Q), \;  mL(\nabla H(\nabla u))\in L^1 (Q)
\quad{\rm and }\quad m \nabla H(\nabla u)\in L^1 (Q).
$$ 

\item[(ii)] Equation \eqref{MFG}-(i) holds in the following sense: inequality
\begin{equation}\label{eq:distrib}
 \quad -u_t-\Delta u +H(\nabla u)\leq -  f(x,m) \quad {\rm in }\; Q, 
\end{equation}with $u(\cdot,T)\leq u_T$, holds in the sense of distributions,  

\item[(iii)] Equation \eqref{MFG}-(ii) holds:  
\begin{equation}\label{eqcontdef}
 \quad m_t-\Delta m-{\rm div}(m \nabla H(\nabla u)))= 0\  {\rm in }\; Q, \quad m(0)=m_0
\end{equation}
in the sense of distributions,

\item[(iv)] The following equality holds: 
\begin{align}\label{defcondsup} 
 \int_Q m(x,t)(f(x,m(x,t))&+ L( \nabla H(\nabla u)(x,t)) )dxdt\\
 &+ \int_\T (m(x,T)u_T(x)-m_0(x)u(x,0))dx=0. \nonumber
\end{align}
\end{itemize}
\end{defn}


Using the estimates obtained for the solution of the Fokker-Planck equation, we are able to prove that the energy functional $\Ecal$, defined as in \eqref{defE},  is bounded from below over the set $\mathcal K$.

\begin{lem}\label{Eboundbelow} There exists $c \in \Rset$ such that
\[
c = \inf_{(m,w) \in \Kcal} \Ecal(m, w).
\]

Moreover, suppose that for a sequence $(m^n,w^n)\in \Kcal$, there exists $e \in \Rset$ such that $\Ecal(m^n, w^n) \le e$ for all $n\in\mathbb N$. Then, for some $c_1$ (depending on $e$), and for all $n\in\mathbb N$
\[
\int_Q (m^n)^{\alpha+1} dx dt +  \int_Q \frac{|w^n|^{\gamma'}}{(m^n)^{\gamma'-1}} dx dt \le c_1.
\]
\end{lem}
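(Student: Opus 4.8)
The plan is to bound $\Ecal$ from below by a coercive, superlinear function of the single scalar quantity $X := \int_Q m^{\alpha+1}\,dxdt$, using Proposition \ref{GN2} to trade the ``bad'' term $-\int_Q F(x,m)$ against the ``good'' term $\int_Q mL(-w/m)$ coming from the Lagrangian.

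First I would record the elementary pointwise inequalities. By the lower bound in \eqref{Lass}, $mL(-w/m) \ge C_L^{-1}|w|^{\gamma'}m^{1-\gamma'} - C_L\,m$, and by \eqref{Fass}, $F(x,m) \le C_F(m^{\alpha+1}+1)$. Next, every $(m,w)\in\Kcal$ is a weak solution of the continuity equation \eqref{kcalconstraint}; by a standard density/trace argument (which also makes sense of $m(\cdot,T)$, using that $|w|^{\gamma'}m^{1-\gamma'}\in L^1(Q)$ and $m\in L^{\alpha+1}(Q)$ give, via Proposition \ref{GN}, enough parabolic regularity for $m$ to have a trace at $t=T$), one may test against functions depending on $t$ only and obtain mass conservation $\int_\T m(x,t)\,dx = 1$ for a.e.\ $t$. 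Hence $\int_Q m\,dxdt = T$ and $\int_\T u_T(x)m(x,T)\,dx \ge -\|u_T\|_{L^\infty(\T)}$; also $|Q|=T$ since $\T$ has unit measure. Writing $E := \int_Q |w|^{\gamma'}m^{1-\gamma'}\,dxdt$, these bounds combine to
\[
\Ecal(m,w) \ \ge\ C_L^{-1}E - C_F X - C_0,
\]
for a constant $C_0$ depending only on $T, C_L, C_F, \|u_T\|_{L^\infty(\T)}$.

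The key step is to invoke Proposition \ref{GN2}: since $m\in L^{\alpha+1}(Q)$ solves \eqref{FKP} with $A := w/m$ (and $E = \int_Q|A|^{\gamma'}m\,dxdt$ is precisely the associated energy), and since we are in the regime $\alpha < \gamma'/N$, there exist $\delta>1$ and $C>0$ with $X^\delta \le C(E+1)$, i.e.\ $E \ge C^{-1}X^\delta - 1$. Substituting,
\[
\Ecal(m,w) \ \ge\ \frac{1}{C_LC}\,X^\delta - C_F X - C_1 \ =:\ g(X), \qquad C_1 := C_0 + C_L^{-1}.
\]
Because $\delta>1$, $g$ is continuous on $[0,+\infty)$ and $g(X)\to+\infty$ as $X\to+\infty$, so it attains a finite minimum $c_0$; thus $\Ecal(m,w)\ge c_0$ for every $(m,w)\in\Kcal$, i.e.\ $\Ecal$ is bounded below. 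To see the infimum is a genuine real number, I would exhibit the competitor $(\bar m,0)$, where $\bar m$ is the smooth, strictly positive, bounded solution of $\bar m_t-\Delta\bar m=0$ with $\bar m(\cdot,0)=m_0$: it lies in $\Kcal$ (the constraint \eqref{kcalconstraint} reduces to the heat equation when $w\equiv0$), and $\Ecal(\bar m,0) = \int_Q \bar m\,L(0) - F(x,\bar m)\,dxdt + \int_\T u_T\bar m(\cdot,T)\,dx$ is finite since $L(0)\in\Rset$ and $\bar m$ is bounded. Hence $c := \inf_\Kcal\Ecal \in\Rset$.

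For the ``moreover'' part, assume $\Ecal(m^n,w^n)\le e$ for all $n$. From $g(X_n) \le e$ and the elementary fact that $aX^\delta - bX \le M$ with $a,b>0$, $\delta>1$ confines $X$ to a bounded interval, we get $X_n = \int_Q (m^n)^{\alpha+1}\,dxdt \le c_1'$ with $c_1'$ depending only on $e$ and the data. Plugging this back into $\Ecal(m^n,w^n) \ge C_L^{-1}E_n - C_F X_n - C_0$ gives $E_n = \int_Q |w^n|^{\gamma'}(m^n)^{1-\gamma'}\,dxdt \le C_L(e + C_F c_1' + C_0)$, and taking $c_1$ to be the maximum of these two bounds finishes the proof. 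The only genuinely delicate points are the trace/mass-conservation for elements of $\Kcal$ (already folded into the structural hypotheses defining $\Kcal$) and the input of Proposition \ref{GN2}; everything else is bookkeeping with the coercivity inequalities above.
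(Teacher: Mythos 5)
Your argument is correct and follows the paper's proof essentially verbatim: the key input is Proposition \ref{GN2} combined with the structural bounds \eqref{Lass}, \eqref{Fass}, and the mass normalization, giving $\Ecal(m,w)\ge a X^\delta - bX - C$ with $\delta>1$; the ``moreover'' part then follows by the same coercivity. The only (harmless) stylistic differences are that you first bound $X_n$ and then $E_n$ whereas the paper absorbs $X_n$ into $(E_n+1)^{1/\delta}$ and bounds $E_n$ first, and that you explicitly produce the competitor $(\bar m,0)$ to show $\Kcal\ne\varnothing$ and $c>-\infty$ is attained as a finite infimum, a point the paper leaves implicit.
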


\begin{proof} Let $(m,w) \in \Kcal$, hence $m\in L^{\alpha+1}(Q)$, $w\in L^1(Q)$. Since $m$ is a weak solution of the Fokker-Planck equation with drift $A = w/m$, we may apply Proposition \ref{GN2} to infer the existence of $C > 0$ and $\delta > 1$ (depending on the data), such that
\begin{equation}\label{eq41}
\left(\int_Q m^{\alpha+1} dx dt\right)^\delta \le C \left(\int_Q \frac{|w|^{\gamma'}}{m^{\gamma'-1}}dx dt + 1\right).
\end{equation}
Moreover, by \eqref{Lass} and \eqref{Fass}, we have that

\begin{align}\label{eq42}
\Ecal(m,w)& \ge C_L^{-1} \int_Q \frac{|w|^{\gamma'}}{m^{\gamma'-1}} dx dt - C_F \int_Q m^{\alpha+1} dx dt - \|u_T\|_{L^\infty(\T)} - C_F T \\& \ge C\left(\int_Q m^{\alpha+1} dx dt\right)^\delta - C_F \int_Q m^{\alpha+1} dx dt - \|u_T\|_{L^\infty(\T)} - C,\nonumber 
\end{align}
which has a finite infimum, since $\delta>1$.

In order to prove the second assertion, it suffices to use \eqref{eq41} for the sequence $(m^n,w^n)$, rewriting \eqref{eq42} as
\begin{align*}
C_L^{-1} \int_Q \frac{|w^n|^{\gamma'}}{(m^n)^{\gamma'-1}} dx dt &\le \Ecal(m^n,w^n)  + C_F \int_Q (m^n)^{\alpha+1} dx dt + \|u_T\|_{L^\infty(\T)} + C_F T \\ &\le C + C \left(\int_Q \frac{|w^n|^{\gamma'}}{(m^n)^{\gamma'-1}}dx dt + 1\right)^{1/\delta},
\end{align*}
and by \eqref{eq41} we conclude.
\end{proof}

We prove now that, up to subsequences, a minimising sequence of $\Ecal$ converges (in different spaces, according to the value of $\gamma'$) to a minimiser.

\begin{lem}\label{min_seq_lem} Let $(m^n, w^n) \in \Kcal$ be a minimising sequence, that is
\begin{equation}\label{min_seq}
\Ecal(m^n, w^n) \xrightarrow{n \to \infty} c := \inf_{(m,w) \in \Kcal} \Ecal(m, w).
\end{equation}
Then, up to subsequences, $m^n \to \bar{m}$ strongly  in $L^p(Q)$ and $w^n\rightharpoonup\bar w$ weakly in  $L^{\frac {\gamma' p}{\gamma' +p-1}}(Q)$, where:
\begin{itemize}
\item  $p \in  [1,\frac{N+2}{N+2-\gamma'})$, if $ 1 < \gamma' \le N+2$;
\item  $p \in  [1,+\infty)$, if $  \gamma' \geq  N+2$.
\end{itemize}
Moreover, if $\gamma' > N+2$,  then $m^n \to \bar{m}$ in $C^{0,\theta}(Q)$ for some $\theta > 0$.
 
In particular, we can always take $p=\alpha+1$ in the above statement, and,  up to subsequences, $m^n \to \bar m$ a.e. in $Q$. Then, the couple $(\bar m, \bar w)$ is a minimiser of $\Ecal$ in $\Kcal$.

\end{lem}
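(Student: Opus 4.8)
The plan is to combine the a priori bounds from Lemma \ref{Eboundbelow} with the compactness results of Section \ref{existence} (in particular Corollary \ref{comp}), and then to pass to the limit in the energy using lower semicontinuity. First I would note that, since $(m^n,w^n)$ is a minimising sequence, $\Ecal(m^n,w^n)$ is bounded above, so Lemma \ref{Eboundbelow} yields a uniform bound on $\int_Q (m^n)^{\alpha+1}\,dxdt$ and on the ``energy'' term $E_n := \int_Q |w^n|^{\gamma'}(m^n)^{1-\gamma'}\,dxdt$. Writing $A_n := w^n/m^n$ (and $A_n\equiv 0$ where $m^n=0$), the pair $(m^n,A_n)$ solves the Fokker--Planck equation \eqref{FKP} in the weak sense, with $\int_Q |A_n|^{\gamma'}m^n\,dxdt = E_n$ bounded; moreover $\|m^n\|_{L^{\alpha+1}(Q)}$ is bounded, and since $\alpha+1 < \gamma'/N + 1$ one checks that $\alpha+1$ lies in the admissible range for $p_0$ in Corollary \ref{comp} (here is where $\alpha<\gamma'/N$ is used again). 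Hence Corollary \ref{comp} applies and gives, up to a subsequence, $m^n \to \bar m$ strongly in $L^p(Q)$ for $p$ in the stated ranges, and in $C^{0,\theta}(Q)$ when $\gamma'>N+2$; in particular one may take $p=\alpha+1$ and, after a further subsequence, $m^n \to \bar m$ a.e.\ in $Q$.

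Next I would establish weak compactness of $w^n$. By Hölder, $\|w^n\|_{L^s(Q)}$ with $s = \frac{\gamma' p}{\gamma'+p-1}$ is controlled by $E_n^{1/\gamma'}\|m^n\|_{L^p(Q)}^{1/\gamma}$, which is uniformly bounded; since $s>1$ for the relevant $p$, reflexivity gives $w^n \rightharpoonup \bar w$ weakly in $L^s(Q)$ up to a subsequence. Passing to the limit in the linear constraint \eqref{kcalconstraint} is then immediate from $m^n\to\bar m$ in $L^1(Q)$ (hence $\nabla m^n \rightharpoonup \nabla\bar m$ in the distributional sense, using the $\H^{r,1}$ bound of Proposition \ref{GN} to control $\nabla m^n$ in some $L^r$), $w^n\rightharpoonup\bar w$, and the fixed initial datum; so $(\bar m,\bar w)$ satisfies the constraint. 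One also needs $\bar m\ge 0$ (clear from a.e.\ convergence) and $|\bar w|^{\gamma'}\bar m^{1-\gamma'}\in L^1(Q)$, which will follow from the lower semicontinuity step below, so that $(\bar m,\bar w)\in\Kcal$.

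Finally, to conclude that $(\bar m,\bar w)$ is a minimiser I would show $\Ecal(\bar m,\bar w)\le \liminf_n \Ecal(m^n,w^n) = c$; combined with $\Ecal(\bar m,\bar w)\ge c$ (which holds since $(\bar m,\bar w)\in\Kcal$) this gives equality. The energy splits as $\int_Q m L(-w/m) - \int_Q F(x,m) + \int_\T u_T m(\cdot,T)$. For the first term I would use that $(m,w)\mapsto \int_Q m L(-w/m)\,dxdt$ is convex and, via \eqref{Lass}, has superlinear growth, hence is weakly lower semicontinuous on $L^1\times L^s$ (a standard result on integral functionals of this ``perspective'' type); this simultaneously yields the finiteness $|\bar w|^{\gamma'}\bar m^{1-\gamma'}\in L^1(Q)$ needed above. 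For the coupling term, since $m^n\to\bar m$ a.e.\ and $m^n$ is bounded in $L^{\alpha+1}(Q)$ with $F(x,m)\le C_F(m^{\alpha+1}+1)$, the sequence $F(x,m^n)$ is uniformly integrable (indeed equi-bounded in $L^{(\alpha+1)/\alpha}(Q)$ away from the additive constant), so by Vitali $\int_Q F(x,m^n)\to\int_Q F(x,\bar m)$. The boundary term is the delicate point: I need $\int_\T u_T m^n(\cdot,T)\to\int_\T u_T\bar m(\cdot,T)$, which requires control of the traces $m^n(\cdot,T)$; this follows from the $\H^{r,1}(Q)$ bound on $m^n$ (Proposition \ref{GN}), which gives boundedness of the traces in a suitable negative Sobolev space together with, after extracting, convergence of $m^n(\cdot,T)$ in that space, enough to pair against $u_T\in C^2(\T)$ — alternatively one argues via the weak formulation of \eqref{FKP} tested against a function not vanishing at $t=T$, as in the proof of Proposition \ref{GN}. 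I expect this trace convergence to be the main obstacle; everything else is a routine application of the compactness and semicontinuity machinery already set up.
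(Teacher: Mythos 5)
Your proposal follows the same route as the paper: use Lemma \ref{Eboundbelow} to get a uniform bound on $E_n := \int_Q |w^n|^{\gamma'}(m^n)^{1-\gamma'}$, apply Corollary \ref{comp} with $A^n = w^n/m^n$ to obtain strong convergence of $m^n$, derive weak $L^s$ compactness of $w^n$ from the H\"older estimate $\|w^n\|_{L^s}\le E_n^{1/\gamma'}\|m^n\|_{L^p}^{1/\gamma}$, and conclude by lower semicontinuity of $\Ecal$. The paper compresses this last step into ``readily follows by lower semi-continuity of $\mathcal E$'', whereas you usefully unpack it into the three pieces (convex kinetic term, coupling term, trace term) and correctly single out the trace convergence $\int_\T u_T\,m^n(\cdot,T)\to\int_\T u_T\,\bar m(\cdot,T)$ as the only genuinely non-routine point; your proposed remedy (equicontinuity of $t\mapsto m^n(\cdot,t)$ in a weak space via the $\H^{r,1}$ bound, or testing the Fokker--Planck constraint against functions not vanishing at $t=T$) is the right idea and matches what the paper itself does later on when it invokes uniform H\"older continuity of $t\mapsto m_M(t)$ in $\mathcal P(\T)$.

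One small imprecision: your parenthetical that $F(x,m^n)$ is ``equi-bounded in $L^{(\alpha+1)/\alpha}(Q)$'' does not follow from the $L^{\alpha+1}$ bound on $m^n$ and is not needed. What actually gives uniform integrability of $F(x,m^n)$ is the \emph{strong} $L^{\alpha+1}$ convergence of $m^n$ (which yields equi-integrability of $(m^n)^{\alpha+1}$) combined with the growth bound \eqref{Fass}; since you already have that strong convergence from Corollary \ref{comp}, the Vitali argument goes through, so the conclusion is fine even if the justification as stated is off.
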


\begin{proof} Let $(m^n, w^n) \in \Kcal$ be a minimising sequence. By choosing $n$ large enough, $\Ecal(m^n, w^n) \le c+1$, and Lemma \ref{Eboundbelow} implies that
\[
 \int_Q \frac{|w^n|^{\gamma'}}{(m^n)^{\gamma'-1}} dx dt \le c_1
\]
for some $c_1 > 0$. Note that since $m^n$ is  a weak solution to the Fokker-Planck equation with drift $A^n = w^n/m^n$, 
\begin{equation}\label{Aintegrability}
\int_Q |A^n|^{\gamma'} m^n \, dx dt = \int_Q \frac{|w^n|^{\gamma'}}{(m^n)^{\gamma'-1}} \, dx dt \le c_1.
\end{equation}
The thesis for $m^n$ follows applying Corollary \ref{comp}. In all cases, $\bar m \in L^{\alpha+1}(Q)$ (since $\alpha +1 < \frac{N+2}{N+2-\gamma'}$) and, up to subsequences, $m^n \to \bar m$ a.e. in $Q$. 

Concerning $w^n$, for all $p$ such  that $m_n \to \bar{m}$ strongly in $L^p(Q)$, by H\"older inequality
$$
\int_Q |w^n|^{\frac {\gamma' p}{\gamma' +p-1}}=\int_{\{m^n>0\}} |w^n|^{\frac {\gamma' p}{\gamma' +p-1}}\leq \|m^n\|_{L^{p}(Q)}^{\frac {\gamma' -1}{\gamma' +p-1}} \left(\int_Q \frac{|w^n|^{\gamma'}}{(m^n)^{\gamma'-1}} \, dx dt\right)^{\frac{p}{\gamma'+p-1}}\leq C.
$$
Hence, $w^n$ converges weakly to $\bar w$ in $L^{\frac {\gamma' p}{\gamma' +p-1}}(Q)$ and we can take in particular $p=\alpha+1$. The fact that the limit $(\bar m, \bar w)$ is a minimiser readily follows by lower semi-continuity of $\mathcal E$.
\end{proof}

\subsection{A convex problem}\label{S31}

In order to find a link between the minimiser of $\Ecal$ and the solution of \eqref{MFG}, being the energy $\Ecal $  not convex in $(m,w)$ due to the presence of the term $-\int_Q F(x,m)dxdt$, we convexify it by adding a term that vanishes in $\bar m$, the limit of the minimising sequence $m^n$. Therefore, let us define  
\[
\overline{\Ecal}(m, w) = \Ecal(m, w) + \int_Q G(x,t,m) dx dt,
\]
where
\[
G(x,t,m) := \frac{c_f+1}{\alpha(\alpha + 1)}\left[ (m+1)^{\alpha+1} - (\bar{m}(x,t) + 1)^{\alpha+1}\right] - \frac{c_f+1}{\alpha}(\bar{m}(x,t) + 1)^\alpha(m - \bar{m}(x,t)).
\]
Then,
\[
\begin{split}
& g(x,t,m) := \partial_m G(x,t,m) =   \frac{c_f+1}{\alpha} (m+1)^{\alpha} - \frac{c_f+1}{\alpha}(\bar{m}(x,t) + 1)^\alpha , \\
& \partial_m g(x,t,m) =\partial_{mm} G(x,t,m) =(c_f+1)(m+1)^{\alpha-1}.
\end{split}
\]
Note that $G(x,t, \bar{m}(t,x)) = \partial_m G(x,t, \bar{m}(t,x)) = 0$ for all $(x,t)\in Q$, and $\partial_{mm} G(x,t, m) \ge 0$ for all $m\geq0$, so that $G(x,m) \ge 0$ everywhere. Moreover,
\begin{multline}\label{fgboundvar}
 \frac{c_f}{\alpha} - f(0) + \frac{1}{\alpha}(m+1)^\alpha - \frac{c_f+1}{\alpha} (\bar{m}(x,t) + 1)^\alpha \le \\ (-f+g)(x, t, m) \le \frac{c_f+1}{\alpha}(m+1)^\alpha - \frac{c_f+1}{\alpha} (\bar{m}(x,t) + 1)^\alpha.
\end{multline}

\begin{lem}\label{Ebar_convex} $\overline{\Ecal}$ is convex on $\Kcal$,  strictly convex  with respect to $m$, that is
\[
\overline{\Ecal}(\tau m + (1- \tau)\mu, \tau w + (1-\tau)v) \le \tau \overline{\Ecal}(m, w) + (1-\tau)\overline{\Ecal}(\mu, v) - \frac{1}{2}\tau(1-\tau) \int_Q \psi(m, \mu) \, dxdt
\]
for all $\tau \in [0,1]$, $(m,w),(\mu,v) \in \Kcal$, where $\psi(x, y) = \min \{(x+1)^{\alpha-1}, (y+1)^{\alpha-1} \}(x-y)^2$.
\end{lem}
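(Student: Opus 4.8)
The plan is to verify convexity of $\overline{\Ecal}$ term by term, using the classical fact that the map $(m,w)\mapsto mL(-w/m)$ is convex on $\{m\ge 0\}$ because $L$ is convex and this is its perspective function (recall $L$ is convex since it is a Legendre transform). The linear terms $-\int_Q F(x,m)$ and $\int_\T u_T\,m(\cdot,T)$ cause no trouble: $F$ appears in $\overline{\Ecal}$ only through the combination $\int_Q(G(x,t,m)-F(x,m))\,dxdt$, so what must be shown is that $m\mapsto G(x,t,m)-F(x,m)$ is convex in $m$ for each fixed $(x,t)$, with a quantitative strong-convexity lower bound in $m$. Differentiating twice, $\partial_{mm}(G-F)(x,t,m) = (c_f+1)(m+1)^{\alpha-1} - \partial_m f(x,m) \ge (c_f+1)(m+1)^{\alpha-1} - c_f(m+1)^{\alpha-1} = (m+1)^{\alpha-1} > 0$ by \eqref{fass}; so $m\mapsto G-F$ is convex, and in fact the second derivative is bounded below by $(m+1)^{\alpha-1}$.

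Next I would turn the pointwise-in-$m$ strong convexity into the stated inequality. For a $C^2$ function $\phi$ of one real variable with $\phi'' \ge \kappa(\cdot)$, a standard integration argument along the segment from $\mu$ to $m$ gives
\[
\phi(\tau m + (1-\tau)\mu) \le \tau\phi(m) + (1-\tau)\phi(\mu) - \tfrac12\tau(1-\tau)\,\bar\kappa\,(m-\mu)^2,
\]
where $\bar\kappa$ is any lower bound for $\phi''$ on the interval $[\min\{m,\mu\},\max\{m,\mu\}]$. Applying this with $\phi(\cdot)=G(x,t,\cdot)-F(x,\cdot)$ and $\kappa(s)=(s+1)^{\alpha-1}$: since $s\mapsto(s+1)^{\alpha-1}$ is monotone (decreasing if $\alpha<1$, increasing if $\alpha>1$, constant if $\alpha=1$), on the interval between $m$ and $\mu$ it is bounded below by $\min\{(m+1)^{\alpha-1},(\mu+1)^{\alpha-1}\}$, which is exactly $\psi(m,\mu)/(m-\mu)^2$. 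Combining this with the (ordinary, non-strict) convexity of $(m,w)\mapsto mL(-w/m)$ and the linearity of the terminal term yields the claimed inequality, and in particular plain convexity of $\overline{\Ecal}$ on $\Kcal$; strict convexity in $m$ follows because $\psi(m,\mu)>0$ whenever $m\ne\mu$ (as $\alpha-1>-1$ keeps $(s+1)^{\alpha-1}$ finite and positive for $s\ge 0$). One should also remark that all integrals are well-defined in $[-\infty,+\infty]$ on $\Kcal$ by the growth bounds \eqref{Lass}, \eqref{Fass} and the definition of $\Kcal$, so the term-by-term argument is legitimate; the convention in the footnote handles $(m,w)=(0,0)$.

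The only genuinely delicate point is the bookkeeping of which terms are finite: on $\Kcal$ one has $mL(-w/m)\in L^1$ and $m^{\alpha+1}\in L^1$, hence $F(x,m)\in L^1$ and $G(x,t,m)\in L^1$ (using $0\le\bar m\in L^{\alpha+1}$), so $\overline{\Ecal}$ is finite-valued on $\Kcal$ and the sum of convex functionals is convex without any $\infty-\infty$ ambiguity. Everything else is the routine one-variable computation sketched above, so I do not expect a real obstacle here.
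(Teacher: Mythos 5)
Your proposal is correct and follows essentially the same route as the paper: convexity of the perspective term $(m,w)\mapsto mL(-w/m)$, linearity in $m$ of the terminal term, and the lower bound $\partial_{mm}(-F+G)\ge (m+1)^{\alpha-1}$ derived from \eqref{fass}. You simply spell out the routine one-variable step that converts the pointwise second-derivative bound (using monotonicity of $s\mapsto(s+1)^{\alpha-1}$ to replace it by $\psi(m,\mu)/(m-\mu)^2$ on the segment) into the stated quantitative inequality, and add the integrability bookkeeping, both of which the paper leaves implicit.
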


\begin{proof} It is standard to show that $(m,w) \mapsto \int_Q m L\left(-\frac{w}{m}\right)dxdt + \int_\T u_T(x) m(x,T) \, dx$ is convex. It is then sufficient to note that $\partial_{mm} (-F+G)(x,m) = -\partial_m f(x,m) + (c_f+1)(m+1)^{\alpha-1} \ge (m+1)^{\alpha-1}$, because of \eqref{fass}.
\end{proof}

We consider now the MFG system associated to $\overline{\Ecal}$:
\begin{equation}\label{MFGbar}
\begin{cases}
-u_t - \Delta u + H(\nabla u) = -f(x,m) + g(x,t,m), \\
m_t - \Delta m - \diverg(\nabla H(\nabla u)\, m)  = 0 & \text{in $Q$,} \\
m(x,0) = m_0(x), \quad u(x,T) = u_T(x) & \text{on $\T$}.
\end{cases}
\end{equation}

\begin{prop}\label{mfgbarex} If  $\gamma' > 1$, there exists a weak solution $(\tilde{u}, \tilde{m})$ to the MFG system \eqref{MFGbar}, such that the couple $(\tilde m, \tilde m\nabla H(\nabla \tilde u))$ is a minimiser of the (convex) energy functional $\overline{\Ecal}$ in $\Kcal$. Moreover, if $\gamma' > N+2$, such a solution is classical.
\end{prop}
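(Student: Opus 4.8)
The plan is to establish Proposition~\ref{mfgbarex} by a variational argument in the convex framework, following closely the strategy of \cite{CGPT}. The first step is to show that the convexified energy $\overline{\Ecal}$ attains its minimum on $\Kcal$. Since $G \ge 0$, we have $\overline{\Ecal} \ge \Ecal$, and by Lemma~\ref{Eboundbelow} the latter is bounded below; moreover the coercivity estimate in Lemma~\ref{Eboundbelow} combined with the explicit form of $G$ (which grows like $m^{\alpha+1}$) gives that minimising sequences for $\overline{\Ecal}$ have $\int_Q |w|^{\gamma'}/m^{\gamma'-1}$ and $\int_Q m^{\alpha+1}$ uniformly bounded. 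Then, exactly as in Lemma~\ref{min_seq_lem}, one extracts a subsequence with $m^n \to \tilde m$ strongly in $L^{\alpha+1}(Q)$ (and in the better spaces dictated by $\gamma'$ via Corollary~\ref{comp}) and $w^n \rightharpoonup \tilde w$ weakly in $L^{\gamma'(\alpha+1)/(\gamma'+\alpha)}(Q)$; lower semicontinuity of the convex integrand $mL(-w/m)$, continuity of the $\int u_T\,m(\cdot,T)$ term (which requires checking that the constraint \eqref{kcalconstraint} passes to the limit so that traces make sense, as in \cite{CGPT}), and strong $L^{\alpha+1}$ convergence for the $-F + G$ part yield that $(\tilde m, \tilde w)$ is a minimiser. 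By strict convexity in $m$ (Lemma~\ref{Ebar_convex}), the minimising $m$ is unique.

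The second step is to derive the optimality system, i.e.\ to show that the minimiser $(\tilde m, \tilde w)$ corresponds to a weak solution $(\tilde u, \tilde m)$ of \eqref{MFGbar}. The standard route is convex duality: one writes $\overline{\Ecal}$ as the value of a convex minimisation problem over $(m,w)$ subject to the linear constraint \eqref{kcalconstraint}, and identifies its dual problem as a maximisation over subsolutions $u$ of the Hamilton--Jacobi--Bellman equation $-u_t - \Delta u + H(\nabla u) \le -f(x,m) + g(x,t,m)$ with terminal datum $u_T$ — here the coupling term is frozen through the convex conjugate of $m \mapsto F(x,m) - G(x,t,m)$, whose monotone (indeed, thanks to the convexification, the relevant potential is convex) derivative is $-f + g$. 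One then proves that there is no duality gap and that the dual problem admits a maximiser $\tilde u$; the primal-dual optimality conditions force $\tilde w = -\tilde m\,\nabla H(\nabla \tilde u)$ a.e.\ on $\{\tilde m > 0\}$ and the continuity equation for $\tilde m$, and the equality case in the Fenchel--Young inequalities gives precisely the weak formulation, including the crucial integral identity of the type \eqref{defcondsup}. This is the part I expect to be the most delicate: one must argue that the HJB subsolution $\tilde u$ has enough integrability ($\nabla \tilde u \in L^\gamma$, $\tilde m L(\nabla H(\nabla \tilde u)) \in L^1$, $\tilde m \nabla H(\nabla \tilde u) \in L^1$) to make all the pairings and the integration by parts legitimate, and to handle the region where $\tilde m$ vanishes; the a~priori bound on $\int_Q m^{\alpha+1}$ from Lemma~\ref{Eboundbelow} together with the growth \eqref{fgboundvar} of $-f+g$ is what keeps the right-hand side of the HJB equation in the right Lebesgue space, which is why the sub-critical assumption enters. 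All of this is carried out in \cite{CGPT} for a structurally identical problem, so one can largely cite that machinery after checking that \eqref{fgboundvar} provides the required growth and sign conditions.

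The final step is the regularity claim: if $\gamma' > N+2$, the weak solution is classical. Here one first uses Corollary~\ref{comp} (third bullet): since $E = \int_Q |\nabla H(\nabla \tilde u)|^{\gamma'}\tilde m\,dxdt$ is finite and $\tilde m \in L^{p_0}(Q)$ for a suitable $p_0$, $\tilde m$ is bounded and H\"older continuous on $Q$. Consequently the right-hand side $-f(x,\tilde m) + g(x,t,\tilde m)$ of the HJB equation is bounded and continuous, so by Proposition~\ref{hjb_regularity} (bootstrapped with parabolic Schauder theory, using that $H \in C^3$) $\tilde u \in C^{1,\beta}$ and then $C^{2,1}$; this makes $\nabla H(\nabla \tilde u)$ H\"older, whence the Fokker--Planck equation for $\tilde m$ has H\"older coefficients and linear parabolic Schauder estimates upgrade $\tilde m$ to $C^{2,1}$ as well, with $\tilde m > 0$ by the strong maximum principle. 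A standard iteration then gives full classical regularity. The one genuine subtlety in this step is the initial $L^\infty$ bound on $\tilde m$ from Corollary~\ref{comp}, which needs $\tilde m \in L^{p_0}$ for some $p_0 > 1$; but this is immediate since $\tilde m \in L^{\alpha+1}(Q)$ with $\alpha + 1 > 1$.
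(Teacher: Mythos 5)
Your overall outline (direct method to find a minimiser of $\overline{\Ecal}$, then convex duality a la \cite{CGPT} to extract $\tilde u$ and the integral identity, then regularity bootstrap when $\gamma'>N+2$) is the right template, but you skip the step that actually makes \cite{CGPT} applicable. The bounds in \eqref{fgboundvar} on the modified coupling $-f+g$ read $\frac{1}{\alpha}(m+1)^\alpha-\frac{c_f+1}{\alpha}(\bar m(x,t)+1)^\alpha \le (-f+g)(x,t,m)\le \frac{c_f+1}{\alpha}(m+1)^\alpha-\frac{c_f+1}{\alpha}(\bar m(x,t)+1)^\alpha$, and the $(x,t)$-dependent term $(\bar m(x,t)+1)^\alpha$ is \emph{unbounded} when $1<\gamma'\le N+2$, since $\bar m$ is only in $L^{\alpha+1}(Q)$. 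The hypotheses in \cite{CGPT} (Theorems~6.2, 6.4) require the two-sided growth bound on the coupling to be uniform in $(x,t)$, so you cannot simply ``cite that machinery after checking \eqref{fgboundvar}'' — the check fails. The paper handles this by truncating $\bar m$ at level $M$, building $G_M$, $g_M$, $\overline{\Ecal}_M$ so that \cite{CGPT} applies for each fixed $M$, obtaining weak solutions $(u_M,m_M)$ and their associated optimality identities, and then performing a delicate stability analysis ($\Gamma$-convergence of $\overline{\Ecal}_M$ to $\overline{\Ecal}$, strong convergence of $(m_M,w_M)$ via Young measures, control of $\int u_M(0)m_0$, etc.) to pass the entire package to the limit $M\to\infty$. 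That limiting argument is where most of the work of the proof lives, and your proposal omits it entirely.

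A second, smaller gap is in the $\gamma'>N+2$ regularity step. You propose to feed the dual variable $\tilde u$ into Proposition~\ref{hjb_regularity} and bootstrap, but that proposition is a local gradient-H\"older estimate for \emph{classical} solutions, whereas from duality $\tilde u$ is a priori only a distributional subsolution in $L^q(Q)$ with $\nabla\tilde u\in L^\gamma(Q)$ — not something you can plug in directly. The paper's route is cleaner: once $\tilde m$ is H\"older (Corollary~\ref{comp}, third bullet), solve the HJB equation classically with the H\"older right-hand side $\tilde\beta=(-f+g)(\cdot,\cdot,\tilde m)$ to get some $u_1\in C^2$, observe by comparison that $(u_1,\tilde\beta)$ is still a maximiser of the dual problem, and conclude via \cite{CGPT} that $(u_1,\tilde m)$ is also a solution of \eqref{MFGbar}; elliptic regularity for the Fokker--Planck equation with smooth drift then upgrades $\tilde m$. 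You would need some substitute for this comparison step to make your bootstrap legitimate.
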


\begin{proof}


{\bf 1.}  We consider first the case $1< \gamma' \le N+2$. 

 In order to apply the standard  variational theory for convex systems, see \cite{CGPT}, we need to truncate $\bar m$ because in  \cite{CGPT} the  estimate \eqref{fgboundvar} is given with left and right bounds independent from $(x,t)$. 
 
{\bf 1.1} Let us, therefore, consider the truncated function $\bar m_M$ defined for $M>0$, as 
$$
\bar m_M(x,t):=\left\{ \begin{array} {ll}
\bar m(x,t) & \forall (x,t)\in Q \ \ \text {s.t}  \ \ 0\leq \bar m(x,t)\leq M,\\
M & \text{otherwise} 
\end{array}
\right. 
$$
and the truncated energy functional 
\[
\overline{\Ecal}_M(m, w): = \Ecal(m, w) + \int_Q G_M(x,t,m) dx dt,
\]
where
\[
G_M(x,t, m) : = \frac{c_f+1}{\alpha(\alpha + 1)}\left[ (m+1)^{\alpha+1} - (\bar{m}_M(x,t) + 1)^{\alpha+1}\right] - \frac{c_f+1}{\alpha}(\bar{m}_M(x,t) + 1)^\alpha(m - \bar{m}_M(x,t)).
\]
Then, defining
\[
 g_M(x,t,m) := \partial_m G_M(x,t,m) =   \frac{c_f+1}{\alpha} (m+1)^{\alpha} - \frac{c_f+1}{\alpha}(\bar{m}_M(x,t) + 1)^\alpha , 
\]
we have, in particular, that 
\begin{equation*}\label{fgbound}
 \frac{c_f}{\alpha} - f(0) + \frac{1}{\alpha}(m+1)^\alpha - \frac{c_f+1}{\alpha} (M + 1)^\alpha \le (-f+g_M)(x,t, m) \le \frac{c_f+1}{\alpha}(m+1)^\alpha - \frac{c_f+1}{\alpha} (M + 1)^\alpha.
\end{equation*}
Thus $\overline{\Ecal}_M(m, w)$ is strongly  strictly  convex w.r.t. $m$ and it is precisely a functional of the type studied in \cite{CGPT}.   Therefore, for all $M>0$, Theorem 6.2 in \cite{CGPT} gives us a weak solution $(u_M, m_M)$ such that $u_M$ is bounded below by a constant depending on $\|u_T\|_{C^2}$ and on $\|H(\nabla u_T)\|_{\infty}$. 

{\bf 1.2.} We have now to show the stability of solutions with respect to this approximation.  We follow Section 6.4 in \cite{CGPT}. Note that it is enough to set $A=Id$ in their second order  MFG system in order to obtain our truncated convex problem. On the one side, we are in a simpler case than the one in  \cite{CGPT}, since we are only approximating the coupling function $-f+g$ with the sequence $-f+ g_M$, while all the other data are not approximated. On the other,  even if $-f+ g_M$ converges locally uniformly to $-f+g$,  the limit $-f+g$ satisfies the more general inequality \eqref{fgboundvar}, where the right and left bounds  depend also on $(x,t)$ (not only in $m$).  However, we have the additional information that they are  bounded in $L^{1+\frac{1}{\alpha}} (Q)$, since $\bar m \in L^{\alpha+1}(Q) $. Note that the fact that the sequence $-f+ g_M$ depends also on time does not add any difficulty as stated in the introduction of \cite{CGPT}. 

 Let $w_M:=- m_M \nabla H(\nabla u_M)$. By Theorem 6.4 in \cite{CGPT}, the couple $(m_M,w_M)$ is a minimiser for $\overline \Ecal_M 
$. Proceeding as in  \cite{CGPT} (or as we did for the minimising sequence $(m^n, w^n)$ in Lemma \ref{min_seq_lem}),  we can prove that 
$$
\|m_M\|_{L^{\alpha+1}(Q)} +\|w_M\|_{L^{\frac {\gamma' (\alpha +1)}{\gamma' +\alpha}}(Q)}+ \left\|\frac {|w_M|^{\gamma'}}{(m_M)^{\gamma'-1}}\right\|_{L^{1}(Q)} \leq C
$$
and for all $M>0$, $t\to m_M(t)$ are uniformly H\"older continuous in $\mathcal P (\T)$. Hence, up to a subsequence, $(m_M,w_M)$ converges weakly in $L^{\alpha+1}(Q)\times L^{\frac {\gamma' (\alpha +1)}{\gamma' +\alpha}}(Q)$ to some $(\tilde m,\tilde w)$ and  $m_M(t)$ converges to $\tilde m (t)$ in $C^0([0,T], \mathcal P(\T))$. It follows that  $(\tilde m,\tilde w)$ satisfy $\tilde w ^{\gamma'} \tilde m^{ 1 - \gamma'} \in L^1(Q)$ and for all $ \varphi \in C_0^\infty(\T \times [0,T))$
\begin{align*}
0=&\lim_{M\to+\infty }\int_Q (m_M  \varphi_t + w_M\cdot \nabla \varphi + m_M  \Delta \varphi ) dx dt+  \int_{\T}m_0(x) \varphi(x,0) dx \\
=&\int_Q (\tilde m  \varphi_t + \tilde w\cdot \nabla \varphi + \tilde m  \Delta \varphi ) dx dt+  \int_{\T}m_0(x) \varphi(x,0) dx.
\end{align*}
Hence $(\tilde m,\tilde w)$ satisfies  \eqref{kcalconstraint}. 

 We claim that
\begin{equation}\label{estim}
\limsup_{M\to+\infty} \inf_{(m,w)\in\mathcal K} \overline \Ecal_M (m,w) \leq \inf _{(m,w)\in\mathcal K} \overline \Ecal (m,w).
\end{equation}
Indeed, 
\begin{multline*}
\limsup_{M\to+\infty} \inf_{(m,w)\in\mathcal K} \overline \Ecal_M (m,w)\\ \leq \limsup _{M\to+\infty} \int_Q m L\left(-\frac{w}{m}\right) - F(x,m) + G_M(x,t,m)\, dx dt + \int_\T u_T(x) m(x,T) \, dx,
\end{multline*}
for all $(m,w)\in\mathcal K$. Now, the locally uniform convergence of $G_M$ to $G$ gives, by dominated convergence theorem, 
$$
  \limsup _{M\to+\infty} \int_Q  G_M(x,t,m)\, dx dt=\lim _{M\to+\infty} \int_Q  G_M(x,t,m)\, dx dt=  \int_Q  G(x,t,m)\, dx dt.
$$
Hence, 
 \begin{align*}
\limsup _{M\to+\infty} \int_Q m L\left(-\frac{w}{m}\right) - F(x,m) + G_M(x,t,m)\, dx dt + \int_\T u_T(x) m(x,T) \, dx= \overline{\Ecal} (m,w)
\end{align*}
for all $(m,w)\in\mathcal K$. Thus, 
$$
\limsup_{M\to+\infty} \inf_{(m,w)\in\mathcal K} \overline \Ecal_M (m,w)\leq\overline{\Ecal} (m,w)
$$
for all $(m,w)\in\mathcal K$ and \eqref{estim} holds.

 Let us now show that
$$
\liminf _{M\to+\infty} \overline{\Ecal}_M(m_M, w_M) \geq \overline{\Ecal} (\tilde m,\tilde w).
$$
Note that, by convexity of $L$, we have 
$$
\liminf _{M\to+\infty} \int_Q m_M L\left(-\frac{w_M}{m_M}\right) dx dt \geq 
\int_Q \tilde m L\left(-\frac{\tilde w}{\tilde m}\right) dx dt, 
$$
by convexity of $-F+G_M$ and the locally uniform convergence of $G_M$ to $G$
$$
\liminf _{M\to+\infty} \int_Q (- F(x,m_M) + G_M(x,t,m_M))\, dx dt  \geq 
\int_Q ( - F(x,\tilde m) + G(x,t,\tilde m))\, dx dt, 
$$
and by the  convergence of $m_M(T)$
$$
\lim _{M\to+\infty}  \int_\T u_T(x) m_M(x,T) \, dx =  \int_\T u_T(x) \tilde m(x,T) \, dx.
$$
Hence, 
\begin{align*}
\liminf _{M\to+\infty} \overline{\Ecal}_M & (m_M, w_M) =\\ &\liminf _{M\to+\infty} \int_Q m_M L\left(-\frac{w_M}{m_M}\right) - F(x,m_M) + G_M(x,t,m_M)\, dx dt + \int_\T u_T(x) m_M(x,T) \, dx\\ &\geq 
\int_Q \tilde m L\left(-\frac{\tilde w}{\tilde m}\right) - F(x,\tilde m) + G(x,t,\tilde m)\, dx dt + \int_\T u_T(x) \tilde m(x,T) \, dx\\
&=\overline{\Ecal} (\tilde m,\tilde w).
\end{align*}

Therefore, thanks to \eqref{estim},  $(\tilde m,\tilde w)$ minimises $\overline{\Ecal}$,
$$
\lim _{M\to+\infty} \int_Q m_M L\left(-\frac{w_M}{m_M}\right) dx dt =
\int_Q \tilde m L\left(-\frac{\tilde w}{\tilde m}\right) dx dt
$$
and 
$$
\lim_{M\to+\infty} \int_Q (- F(x,m_M) + G_M(x,t,m_M))\, dx dt =
\int_Q ( - F(x,\tilde m) + G(x,t,\tilde m))\, dx dt.
$$
Since $-F+G_M$ are bounded below and $-F+G$ is strictly convex,   an argument using Young measures as  the one in \cite{CGPT}, gives us the strong convergence of $( m_M,w_M)$ to a minimiser $(\tilde m,\tilde w)$ of $\overline{\Ecal}$, in $L^{\alpha+1}(Q)\times L^{\frac {\gamma' (\alpha +1)}{\gamma' +\alpha}}(Q)$. Being the minimiser unique,  due to strict convexity of $\bar \Ecal$, the full sequence $( m_M,w_M)$ strongly converges to $(\tilde m,\tilde w)$. 

{\bf 1.3}  We are left to prove that  we can find  a weak solution for  \eqref{MFGbar} from the minimiser $(\tilde m,\tilde w)$.

 Let  $\beta_M(x,t):= (-f + g_M) (x,t, m_M(x,t)) $ on $Q$.  Thanks to the growth condition on $-f+g_M$ and the uniform bound of $m_M$ in $L^{\alpha+1}(Q)$, the sequence $\beta_M$ weakly converges in $L^{1+ \frac 1 \alpha}(Q)$ to $\tilde \beta$.

Being $u_M$ uniformly bounded by below, Theorem 3.3  in   \cite{CGPT} gives
\begin{equation}\label{boundu}
\|u_M\|_{L^\infty((0,T), L^q(\T))}+\|u_M\|_{L^\gamma(Q)}\leq C,
\end{equation}
Hence, up to a subsequence, $u_M$ weakly converges to $\tilde u$ in $L^q(Q)$. Moreover, proceeding as   in \cite{CGPT} we can prove that $\nabla u_M$ converges weakly to $\nabla \tilde u$ in $L^\gamma(Q)$. Hence by convexity of $H$ we have that $(\tilde u, \tilde \beta)$ satisfies 
$$
 -\tilde u_t-\Delta \tilde u +H(\nabla \tilde u)\leq \tilde \beta \quad {\rm in }\; Q, 
$$
in the sense of distributions.

By Lemma 5.3 in \cite{CGPT} (which does not involve the coupling function therefore it holds even for our limit functions), we have
\begin{equation}\label{ineq}
\left[ \int_{\T} \tilde m \tilde u\right]_0^{T} + \int_Q \tilde m \left(\tilde \beta  + L\left(-\frac{\tilde w}{\tilde m}\right)\right) \; \geq \; 0.
\end{equation}
Moreover, for all $M>0$, being $(u_M,m_M)$ a weak solution of  \eqref{MFG} with coupling function $-f+g_M$, we have that \eqref{defcondsup} is satisfied and for a.e. $(x,t)\in
Q$
$$
(-F+G_M)^*(x,t, \beta_M) + (-F+ G_M)(x, t, m_M)=\beta_M(x,t) m_M(x,t)= (-f +g_M)(x,t,m_M) m_M(x,t),
$$
(here, $(-F+G_M)^*$ is the Legendre transform of $-F+ G_M$ with respect to $m$), hence, by the definition of $w_M$ and $\beta_M$,
\begin{multline*}
\int_Q(-F+G_M)^*(x,t, \beta_M) + (-F+ G_M)(x, t, m_M) +  m_M L\left(-\frac{ w_M}{ m_M}\right) dxdt \\ + \int_{\T} u_T m_M(T) -  u_M(0)m_0 dx = 0.
\end{multline*}
Following Step 3 of the proof of Proposition 5.4  in \cite{CGPT} we can prove that 
$$
\limsup_ {M\to +\infty}\int_{\T}  u_M(0)m_0 dx \leq \int_{\T}  \tilde u(0)m_0.
$$
Hence passing to the limit, due to the convexity of the functionals involved, we have
\begin{equation*}
\int_Q(-F+G)^*(x,t, \tilde \beta) + (-F+ G)(x, t, \tilde m) + \tilde m L\left(-\frac{ \tilde w}{\tilde m}\right) dxdt + \int_{\T} u_T \tilde m(T) -  \tilde u(0)m_0 dx \leq 0.
\end{equation*}

Using the  convexity of $-F+G$,
\begin{equation}\label{ineq1}
(-F+G)^*(x,t, \tilde \beta) + (-F+ G)(x, t, \tilde m)-\tilde \beta(x,t)\tilde m(x,t)\geq 0.
\end{equation}
Therefore \eqref{ineq} is an equality, hence by Lemma 5.3 in \cite{CGPT}, we have $\tilde w=- \tilde m\nabla H(\nabla \tilde u) $. Moreover \eqref{ineq1} holds a.e., hence $\tilde \beta= (-f + g)(\cdot,\cdot, \tilde m)$. This proves that $(\tilde u, \tilde m)$ is a weak solution for  \eqref{MFGbar}. 
Note that this also implies that
$$
\int_Q(-F+G)^*(x,t, \tilde \beta) + (-F+ G)(x, t, \tilde m) + \tilde m L\left(-\frac{ \tilde w}{\tilde m}\right) dxdt + \int_{\T} u_T \tilde m(T) -  \tilde u(0)m_0 dx = 0.
$$

{\bf 2.} Suppose now that $\gamma' > N+2$, so that, by Lemma \ref{min_seq_lem}, $\bar m \in C^{0,\theta}(Q)$, for a $\theta>0$. Then, Theorem 6.4 in \cite{CGPT} applies directly giving   a weak solution $(\tilde u, \tilde m)$ to our convex MFG problem \eqref{MFGbar}, with no need to truncate $\bar{m}$, since there exists $M$ such that $0\leq \bar m(x,t)\leq M$ for all $(x,t)\in Q$. 

We just have to show that the weak solution $(\tilde u, \tilde m)$ enjoys more regularity and it is indeed a  classical solution of \eqref{MFGbar}. Indeed, by Corollary \ref{comp}, also $\tilde m \in C^{0,\theta}(Q)$, hence $\tilde \beta:=  (-f + g)(\cdot,\cdot, \tilde m)$ is H\"older continuous on $Q$; moreover, we know by \cite{CGPT} that the pair $(\tilde u, \bar \beta)$ is a minimiser of
\begin{equation}\label{dualpb}
\inf_{(u,\beta) \in \overline{\mathcal K}} \int_Q (-F+G)^*(x, \beta(x,t)) \, dx dt - \int_\T u(0,x) m_0(x) \, dx, 
\end{equation}
where $\overline{\mathcal K}$ is the set of pairs $(u,\beta)$ satisfying $(u,\beta)\in L^q(Q)\times L^{(\alpha+1)'}(Q)$, (where $(\alpha +1 )' $ is the conjugate exponent  of $\alpha +1 $ and $q$ is as in Definition \ref{def:weaksolMFG}), and 
$$
 -  u_t-\Delta  u +H(\nabla u)\leq  \beta \quad {\rm in }\; Q, 
$$
and $u(T, \cdot) \le u_T(\cdot)$ in the sense of distributions. We may then consider the classical solution $u_1$ to 
$$
 -  (u_1)_t-\Delta  u_1 +H(\nabla u_1) = \tilde \beta(x,t) \quad {\rm in }\; Q, 
$$
$u(T, \cdot) = u_T(\cdot)$. Then, by comparison, the couple $(\tilde u_1, \tilde \beta)$ is still a minimiser of \eqref{dualpb}, so $( u_1, \tilde{m})$ is also a solution to \eqref{MFGbar} (again by Theorem 6.4 in \cite{CGPT}). Since $u_1 \in C^2(Q)$, $\tilde{m}$ is a classical solution of the Fokker-Planck equation by elliptic regularity.

\end{proof}

Since the existence of a solution of the convexified problem has been established, we are now ready to conclude the proofs of Theorems \ref{ex_clas} and \ref{ex_weak}.

\begin{proof}[Proof of Theorem \ref{ex_clas} and Theorem \ref{ex_weak}]

We just need to show that the solution $(\tilde{u}, \tilde m)$ of \eqref{MFGbar} given by Proposition \ref{mfgbarex} is such that $\tilde{m} = \bar{m}$ and $\tilde{w}=\bar {w}$, where $(\bar{m},\bar{w})$ is given by Lemma \ref{min_seq_lem}. This will immediately imply that $(\tilde{u}, \tilde{m})=(\tilde{u}, \bar{m})$ is not only a solution of \eqref{MFGbar}, but also a solution of \eqref{MFG}, and the couple  $(\bar m, \bar m\nabla H(\nabla \tilde u))$ is a minimiser of ${\Ecal}$ in $\Kcal$. 

We observe that $\overline{\Ecal}(m,w) \ge {\Ecal}(m,w)$ (since $G \ge 0$), so

 \[
 \overline{\Ecal}(\tilde m ,\tilde w)=\inf_{(m,w) \in \Kcal} \overline{\Ecal}(m ,w) \geq   \inf_{(m,w) \in \Kcal} \Ecal (m,w) = \Ecal (\bar m,\bar w)= \overline{\Ecal} (\bar m,\bar w). \]
Hence $ (\bar m,\bar w)$ is also a minimiser of $ \overline{\Ecal}$. By strict convexity of $ \overline{\Ecal}$, $ \bar m=\tilde m$ and $\frac{\bar w}{\bar m}= \frac{\tilde w}{\tilde m} $ on the set where $\tilde m=\bar m=0$. Being $\bar w= 0 $ when $\bar m =0$, we also have $\bar w=\tilde w$, as wanted.

\end{proof}

\subsection{$2 < \gamma' \le N+2$: smooth solutions}

We show in this section that, when $2 < \gamma' \le N+2$,  we can find smooth solutions through a penalisation argument under additional hypothesis on $\alpha$.

We consider the approximated (or penalised) Lagrangian
\[
L_\eta (q) := L(q) + \frac{\eta}{N+3}|q|^{N+3}, \quad \forall q \in \Rset^N, \eta > 0,
\]
and the associated functional (defined on $\Kcal$ as before)
\[
\Ecal_\eta(m, w) = \int_Q m L_\eta\left(-\frac{w}{m}\right) - F(x,m) \, dx dt + \int_\T u_T(x) m(x,T) \, dx.
\]
We are basically increasing the growth of $L$ in order to gain regularity for minimisers of the energy. In particular, if $L$ grows faster that $|q|^{N+2}$, a solution of the Fokker-Planck equation, that enters in the constraint $\Kcal$, enjoys automatically H\"older regularity (see Corollary \ref{comp}). Note that a similar penalisation argument has been implemented in \cite{MeSil} in the stationary setting.
For any fixed $\eta > 0$, $L_\eta(q)$ behaves like $|q|^{N+3}$ as $|q| \to +\infty$, namely
\[
c_\eta^{-1}|q|^{N+3} - c_\eta \le L_\eta(q) \le c_\eta(|q|^{N+3} + 1),
\]
for all $q \in \RsetN$ and some positive $c_\eta$ (depending on $\eta$). The corresponding family of Hamiltonians $H_\eta$ satisfy 
\[
h_\eta^{-1}|p|^{\frac{N+3}{N+2}} - h_\eta \le H_\eta(p) \le h_\eta(|p|^{\frac{N+3}{N+2}} + 1),
\]
together with additional bounds independent of $\eta$. In particular, we have by \eqref{Lass} that
\begin{equation}\label{Hetasubgamma}
-C_L\leq -L(0) \le H_\eta(p) = \sup_{q \in \Rset^N} [p \cdot q - L_\eta(q)] \le \sup_{q \in \Rset^N} [p \cdot q - L(q)] = H(p) \le C_H (|p|^{\gamma} + 1),
\end{equation}
where $C$ does not depend on $\eta$. Moreover,
\begin{equation}\label{Heta_p2}
|\nabla H_\eta(p)| \le C(|p|^{\gamma-1} + 1).
\end{equation}
Indeed, by the definition of the Legendre transform,
\[
L_\eta(\nabla H_\eta(p)) = \nabla H_\eta(p) \cdot p - H_\eta(p) \quad \forall p \in \mathbb R^N.
\]
Therefore, by \eqref{Hetasubgamma},
\[
C_L^{-1} | \nabla H_\eta(p)) |^{\gamma'} - C_L \le L_\eta(\nabla H_\eta(p)) \le | \nabla H_\eta(p)| \cdot |p| + C_L\le\frac{C_L^{-1}}{2} | \nabla H_\eta(p)) |^{\gamma'} + C|p|^\gamma + C_L ,
\]
which implies \eqref{Heta_p2}, as $\gamma/\gamma'=\gamma-1$.

When $\eta > 0$ is fixed, $H_\eta$ satisfies \eqref{Hass} with $\gamma=\frac {N+3}{N+2}$, hence its conjugate $\gamma' = N+3 > N+2$, so Theorem \ref{ex_clas} applies. In particular, there exists a classical solution $(u_\eta, m_\eta)$ of
\begin{equation}\label{MFGeta}
\begin{cases}
-u_t - \Delta u + H_\eta(\nabla u) = -f(x,m(x,t)), \\
m_t - \Delta m - \diverg(\nabla H_\eta(\nabla u)\, m)  = 0 & \text{in $Q$,} \\
m(x,0) = m_0(x), \quad u(x,T) = u_T(x) & \text{on $\T$}.
\end{cases}
\end{equation}
such that, setting $w_\eta:= -m_\eta\nabla H(\nabla u_\eta)$, then $(m_\eta , w_\eta)$ is a minimiser of $\Ecal_\eta$. We will show that $(u_\eta, m_\eta)$ converges as $\eta \to 0$ to a solution of the original problem. Before proving Theorem \ref{ex_clas_2} we state some a-priori estimates that will be crucial to pass to the limit.

\begin{lem}\label{lem_p} For all $p \in [1, \frac{N+2}{N+2-\gamma'})$, there exists $C_p > 0$ such that
\begin{equation}\label{N2norm}
\|m_\eta\|_{L^p(Q)} \le C_p.
\end{equation}
\end{lem}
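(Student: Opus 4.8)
The plan is to derive the bound from the uniform energy control on the penalised minimisers $(m_\eta,w_\eta)$, exactly as in Lemma \ref{Eboundbelow}, keeping track of the fact that the estimates of Proposition \ref{GN2} only involve the base Lagrangian $L$ and not the penalisation term. First I would note that, since $(m_\eta,w_\eta)$ minimises $\Ecal_\eta$ over $\Kcal$, we can bound $\Ecal_\eta(m_\eta,w_\eta)$ from above by plugging in a fixed competitor, e.g. the trivial pair $(1,0)\in\Kcal$ associated with the stationary constant density (which lies in $\Kcal$ since $m_0$ has unit mass, after solving the heat equation from $m_0$ — actually it is cleaner to take $(\hat m,0)$ where $\hat m$ solves $m_t-\Delta m=0$, $m(0)=m_0$, which is smooth and bounded, so $\Ecal_\eta(\hat m,0)=\int_Q \hat m L_\eta(0)-F(x,\hat m)+\int_\T u_T\hat m(T) \le C$ with $C$ independent of $\eta$ because $L_\eta(0)=L(0)$). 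Hence $\Ecal_\eta(m_\eta,w_\eta)\le e$ for some $e$ independent of $\eta$.

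Next I would discard the nonnegative penalisation term: since $\frac{\eta}{N+3}|q|^{N+3}\ge 0$,
\[
\Ecal(m_\eta,w_\eta) = \int_Q m_\eta L\!\left(-\tfrac{w_\eta}{m_\eta}\right) - F(x,m_\eta)\,dxdt + \int_\T u_T m_\eta(\cdot,T) \le \Ecal_\eta(m_\eta,w_\eta) \le e.
\]
Now the argument of Lemma \ref{Eboundbelow} applies verbatim to $(m_\eta,w_\eta)$: combining the lower bounds \eqref{Lass}, \eqref{Fass} with the Gagliardo–Nirenberg estimate \eqref{EI} from Proposition \ref{GN2} (valid because $m_\eta\in L^{\alpha+1}(Q)$ solves the Fokker–Planck equation with drift $A_\eta=w_\eta/m_\eta$, and $\alpha<\gamma'/N$), one gets $\delta>1$ and a constant independent of $\eta$ with
\[
\int_Q m_\eta^{\alpha+1}\,dxdt + \int_Q \frac{|w_\eta|^{\gamma'}}{m_\eta^{\gamma'-1}}\,dxdt \le c_1.
\]
In particular $\|m_\eta\|_{L^{\alpha+1}(Q)}\le C$ and $E_\eta:=\int_Q |A_\eta|^{\gamma'}m_\eta\,dxdt\le c_1$, both uniformly in $\eta$.

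Finally, to upgrade from $L^{\alpha+1}$ to $L^p$ for every $p<\frac{N+2}{N+2-\gamma'}$, I would invoke Proposition \ref{Calphaest} (or equivalently the bootstrap in Corollary \ref{comp}): since $\alpha+1<\frac{N+2}{N+2-\gamma'}$ we may use $p_0=\alpha+1$ as the starting exponent, $\|m_\eta\|_{L^{p_0}(Q)}\le K$ and $E_\eta\le K$ uniformly, so $m_\eta$ is bounded in $\H^{r,1}(Q)$ for every admissible $r$ in \eqref{rrange}, and then Proposition \ref{parab_embed} gives the uniform $L^p(Q)$ bound for every $p$ in the stated range. The only mild subtlety — and the one point to handle with a line of care — is confirming that the competitor used for the energy upper bound is independent of $\eta$; the choice $L_\eta(0)=L(0)$ makes this immediate, so there is no real obstacle here beyond bookkeeping.
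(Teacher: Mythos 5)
Your proof is correct and follows essentially the same route as the paper: obtain an $\eta$-independent upper bound on $\Ecal_\eta(m_\eta,w_\eta)$, run the argument of Lemma~\ref{Eboundbelow} (using only the $\eta$-independent lower bound on $L_\eta$) to control $\int_Q m_\eta^{\alpha+1}$ and $E_\eta$, and then invoke Proposition~\ref{Calphaest}/Corollary~\ref{comp} together with Proposition~\ref{parab_embed}. The only cosmetic difference is the source of the $\eta$-uniform upper bound: the paper uses the monotonicity $\Ecal_\eta \le \Ecal_1$ for $\eta \le 1$, giving $\min\Ecal_\eta \le \min\Ecal_1$, whereas you test against the fixed admissible competitor $(\hat m,0)$ with $L_\eta(0)=L(0)$; both yield the same conclusion.
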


\begin{proof} We observe that $\Ecal_\eta(m,w) \le \Ecal_1(m,w)$ for all $\eta \le 1$ and $(m,w) \in \Kcal$. Hence, $\Ecal_\eta(m_\eta,w_\eta) = \min \Ecal_\eta \le \min \Ecal_1$ for all $\eta \le 1$. Since
\[
L_\eta (q) = L(q) + \frac{\eta}{N+3}|q|^{N+3} \ge  C_L^{-1}|q|^{\gamma'} - C_L,
\]
arguing as in Lemma \ref{Eboundbelow} and recalling that $m_\eta$ solves the Fokker-Planck equation with drift $A_\eta = \nabla H_\eta(\nabla u_\eta)$, we get
\[
\int_Q m_\eta^{\alpha+1}\, dx dt + \int_Q |\nabla H_\eta(\nabla u_\eta))|^{\gamma'} m_\eta \, dxdt \le C.
\]
Then we can apply Corollary \ref{comp} to conclude.
\end{proof}

\begin{lem}\label{lem_infty} Suppose that  
 \[
\alpha < \min\left \{ \frac{\gamma'}{N}, \frac{\gamma'-2}{N+2-\gamma'}\right\}.
\]
Then, there exists $C > 0$ such that
\begin{equation}\label{inftybound}
\|m_\eta\|_{L^\infty(Q)} \le C
\end{equation}
for all $\eta > 0$.
\end{lem}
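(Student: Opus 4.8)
The natural approach is a blow-up (rescaling) argument around maximum points of $m_\eta$, in the spirit of \cite{cir16}. If $\alpha=0$ the coupling $f$ is bounded, so the drift $\nabla H_\eta(\nabla u_\eta)$ is bounded via Proposition \ref{hjb_regularity} and \eqref{inftybound} follows from standard parabolic estimates; I therefore assume $\alpha>0$. I would first record a bound $\|u_\eta\|_{L^\infty(Q)}\le C$ with $C$ independent of $\eta$: the lower bound is immediate from the value-function representation of $u_\eta$, using $f\ge0$, $L_\eta\ge -C_L$ and $u_T\in L^\infty(\T)$, while for the upper bound the equation for $u_\eta$ together with \eqref{Hetasubgamma} gives $-\partial_t u_\eta-\Delta u_\eta=-f(x,m_\eta)-H_\eta(\nabla u_\eta)\le C_L$, so that $u_\eta-C_L(T-t)$ is a subsolution of the backward heat equation with terminal datum $u_T$ and hence $u_\eta\le \max_\T u_T+C_LT$.

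Arguing by contradiction, I would suppose $M_\eta:=\|m_\eta\|_{L^\infty(Q)}\to+\infty$ along a subsequence, pick $(x_\eta,t_\eta)\in\overline Q$ with $m_\eta(x_\eta,t_\eta)=M_\eta$ (note $t_\eta>0$ for $\eta$ small, since $m_\eta(\cdot,0)=m_0$ is bounded), and set, on the dilated cylinder $\tilde Q_\eta$,
\[
\tilde m_\eta(y,s):=M_\eta^{-1}m_\eta(x_\eta+\lambda_\eta y,\,t_\eta+\lambda_\eta^2 s),\qquad
\tilde u_\eta(y,s):=\kappa_\eta\,u_\eta(x_\eta+\lambda_\eta y,\,t_\eta+\lambda_\eta^2 s),
\]
with $\lambda_\eta:=M_\eta^{-\alpha/\gamma'}$ and $\kappa_\eta:=M_\eta^{-\alpha(\gamma'-2)/\gamma'}$, the exponents being fixed so that the coupling term and the Hamiltonian term of the rescaled system are $O(1)$. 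Then $0\le\tilde m_\eta\le\tilde m_\eta(0,0)=1$; since $\gamma'>2$ one has $\kappa_\eta\to0$, so $\|\tilde u_\eta\|_{L^\infty}\le C\kappa_\eta\to0$; and $(\tilde u_\eta,\tilde m_\eta)$ solves
\[
-\partial_s\tilde u_\eta-\Delta\tilde u_\eta+\tilde H_\eta(\nabla\tilde u_\eta)=-\tilde f_\eta,\qquad
\partial_s\tilde m_\eta-\Delta\tilde m_\eta-\diverg(\tilde b_\eta\,\tilde m_\eta)=0,
\]
where $\tilde b_\eta=\nabla\tilde H_\eta(\nabla\tilde u_\eta)$, the $\tilde H_\eta$ have at most quadratic growth uniformly in $\eta$ (this is where subquadraticity $\gamma'>2$ is essential), and $|\tilde f_\eta|\le C(\tilde m_\eta^\alpha+M_\eta^{-\alpha})\le C$ by \eqref{fass} and $\tilde m_\eta\le1$. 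A change of variables turns the energy bound $\int_Q|b_\eta|^{\gamma'}m_\eta\le C$ (obtained as in Lemma \ref{Eboundbelow} applied to $\Ecal_\eta$) and the $L^p$ bounds of Lemma \ref{lem_p} into $\int_{\tilde Q_\eta}|\tilde b_\eta|^{\gamma'}\tilde m_\eta\le CM_\eta^{\frac{\alpha(N+2-\gamma')}{\gamma'}-1}$ and $\int_{\tilde Q_\eta}\tilde m_\eta^p\le CM_\eta^{\frac{\alpha(N+2)}{\gamma'}-p}$; the assumptions $\alpha<\gamma'/N$ and $\alpha<\frac{\gamma'-2}{N+2-\gamma'}$ guarantee that, for a suitable admissible $p$, both exponents of $M_\eta$ are negative, so that both integrals tend to $0$.

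To pass to the limit I would apply Proposition \ref{hjb_regularity} to the classical solution $\tilde u_\eta$ (bounded $L^\infty$ norm, bounded right-hand side, bounded $C^2$ terminal slice, uniformly subquadratic Hamiltonian), obtaining local $C^{1,\beta}$ bounds; together with $\tilde u_\eta\to0$ in $L^\infty$ this forces $\nabla\tilde u_\eta\to0$, hence $\tilde b_\eta\to0$, locally uniformly. Then $\tilde m_\eta$ solves, locally, a uniformly parabolic equation with a bounded drift vanishing in the limit, so by De Giorgi--Nash--Moser estimates the $\tilde m_\eta$ are locally uniformly Hölder; along a further subsequence the dilated domains exhaust $\RsetN\times I$ for some interval $I$, and $\tilde m_\eta\to\tilde m$ in $C^0_{loc}$, with $\tilde m$ a nonnegative bounded solution of the heat equation satisfying $\tilde m(0,0)=1$. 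But $\int_{\tilde Q_\eta}\tilde m_\eta^p\to0$ forces $\tilde m\equiv0$, contradicting $\tilde m(0,0)=1$; and when $I$ has a left endpoint the limiting caloric function has zero initial trace because $M_\eta^{-1}m_0\to0$, which already gives $\tilde m\equiv0$. Hence $\|m_\eta\|_{L^\infty(Q)}$ is bounded.

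The hard parts are expected to be: (a) choosing the rescaling exponents so that \emph{every} term of the system stays $O(1)$ while the limit problem degenerates exactly to the plain heat equation — this is precisely what dictates the restrictions on $\alpha$ and $\gamma'$; (b) upgrading the weighted/integral control of the rescaled drift to the local uniform convergence $\tilde b_\eta\to0$, which relies on the subquadratic Hölder estimate of Proposition \ref{hjb_regularity} and on $\kappa_\eta\to0$; and (c) the bookkeeping near $t=0$ and $t=T$, where one uses the vanishing of the rescaled initial datum and, if necessary, a parabolic doubling lemma to make the choice of $(x_\eta,t_\eta)$ robust.
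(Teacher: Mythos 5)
The blow-up strategy is the right one and parallels the paper's proof, but the scaling exponent you chose and, more importantly, the a-priori bound you claim on $\|u_\eta\|_{L^\infty}$ do not hold, and this breaks the argument.

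Your first step asserts $\|u_\eta\|_{L^\infty(Q)}\le C$ with $C$ independent of $\eta$ (and hence of $M_\eta$), with the \emph{lower} bound coming ``from the value-function representation, using $f\ge 0$, $L_\eta\ge -C_L$''. Look at the sign: the HJB in \eqref{MFGeta} is $-u_t-\Delta u+H_\eta(\nabla u)=-f(x,m_\eta)$, so the control representation is
\[
u_\eta(x,t)=\inf_{q}\,\mathbb{E}\Big[\int_t^T L_\eta(q_s)\,-\,f(X_s,m_\eta(X_s,s))\,ds + u_T(X_T)\Big],
\]
i.e.\ $-f$ enters the running cost, and it is \emph{negative} and of order $m_\eta^\alpha\sim M_\eta^\alpha$ near a concentration point. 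Hence $f\ge 0$ gives you nothing for a lower bound; $L_\eta\ge -C_L$ only controls the kinetic part. The upper bound you derive is fine, but the lower bound is precisely the content of the lemma and cannot be asserted a priori. Indeed the paper's own blow-up only produces $|u_\eta|\lesssim a_\eta^{-(\gamma'-2)}=M_\eta^\alpha$ (through the bound \eqref{vbound} on $v_\eta=a_\eta^{\gamma'-2}u_\eta$); no $\eta$-uniform $L^\infty$ bound on $u_\eta$ itself is available before the lemma is proven.

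This gap propagates through the rest. Your choice $\lambda_\eta=M_\eta^{-\alpha/\gamma'}$ makes the rescaled coupling exactly $O(1)$ on $\tilde Q_\eta$. Since the rescaled time interval has length $\sim T/\lambda_\eta^2\to\infty$, the comparison/subsolution argument that the paper uses to bound $v_\eta$ in $L^\infty$ (the functions $\underline v_\eta,\bar v_\eta$ in Step 2) diverges with your scaling: the accumulated cost is $\sim (\text{RHS})\cdot(\text{time length}) \sim C\cdot T/\lambda_\eta^2$. The paper's exponent $a_\eta=M_\eta^{-\alpha/(\gamma'-2)}$ is tuned precisely so that the rescaled coupling satisfies $|f_\eta|\le C_f(a_\eta^2+a_\eta^{\gamma'})$ (see \eqref{Fbound}), so the $a_\eta^2$ cancels the $1/a_\eta^2$ time stretching and $\|v_\eta\|_{L^\infty(Q_\eta)}\le C$ follows; only then does Proposition \ref{hjb_regularity} apply and the uniform H\"older bound on $\mu_\eta$ follow. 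Without a uniform $L^\infty$ bound on $\tilde u_\eta$, Proposition \ref{hjb_regularity} cannot be invoked, and the H\"older bound on $\tilde m_\eta$ --- and hence the contradiction --- does not follow.

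This also explains why your computation seemed to work under the weaker restriction $\alpha<\gamma'/(N+2-\gamma')$ rather than $\alpha<(\gamma'-2)/(N+2-\gamma')$. The stronger restriction in the lemma is not an artefact of the $L^p$ bookkeeping: it comes from the fact that the spatial dilation factor $a_\eta^{-(N+2)}$ with the exponent $-\alpha/(\gamma'-2)$ (needed to control $v_\eta$) blows up faster than $\lambda_\eta^{-(N+2)}$ with $-\alpha/\gamma'$. This is precisely the trade-off the paper comments upon in the remark after the lemma: if $T$- and $\|v\|_\infty$-independent Lipschitz bounds for the rescaled HJB were available (as they are in the stationary case via Bernstein), then a more favourable scaling and the full range $\alpha<\gamma'/N$ would be within reach.

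The remaining pieces of your proposal (the change-of-variables computation of $\int\tilde m_\eta^p$, the use of Lemma \ref{lem_p}, the local H\"older bound on $\tilde m_\eta$ once a drift bound is available, and the contradiction $\int_U\tilde m_\eta^p\ge\delta$ vs.\ $\int\tilde m_\eta^p\to 0$) are correct and essentially coincide with Steps 1 and 3 of the paper's proof; the discussion of the limiting caloric function is not needed, since the contradiction can be read off from the uniform H\"older bound directly.
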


\begin{proof} By contradiction, let $M_\eta > 0, x_\eta \in \T$ be such that
\[
0 < M_\eta := m_\eta(x_\eta, t_\eta) = \max_{(x,t)\in \overline{Q}} m_\eta (x,t) \to \infty, \quad \text{as $\eta \to 0$}.
\]
{\bf 1.} Let us define the following blow-up sequences 
\begin{equation}\label{blowupdef}
v_\eta(x,t) := a_\eta^{\gamma'-2} u_\eta(x_\eta + a_\eta x, t_\eta + a^2_\eta t), \quad \mu_\eta(x, t) := \frac{1}{M_\eta} m_\eta(x_\eta + a_\eta x, t_\eta + a^2_\eta t), \quad a_\eta = M_\eta^{-\alpha/(\gamma'-2)},
\end{equation}
for all $(x, t) \in Q_\eta := \{(x, t) \in \Rset^{N+1} : (x_\eta + a_\eta x, t_\eta + a^2_\eta t) \in Q\}$. Then, $(v_\eta, \mu_\eta)$ solves
\begin{equation}\label{MFGn}
\begin{cases}
- (v_\eta)_t - \Delta v_\eta + \widehat{H}_\eta(\nabla v_\eta)= - f_\eta(x, \mu_\eta)  \\
( \mu_\eta)_t - \Delta \mu_\eta(x) -{\rm div}(\nabla \widehat{H}_\eta(\nabla v_\eta(x)) \, \mu_\eta(x)) = 0  & \text{in $Q_\eta$}, \\
\mu_\eta \left(x,-t_\eta/a_\eta^2 \right) = \frac{1}{M_\eta} m_0(x_\eta + a_\eta x), \\ v_\eta \left(x,(T -t_\eta)/a_\eta^2 \right)  = a_\eta^{\gamma'-2}u_T(x_\eta + a_\eta x) & \text{on $T_\eta$ },
\end{cases}
\end{equation}
where $T_\eta = \{x: x_\eta + a_\eta x \in \T\}$, $\widehat{H}_\eta(p) = a_\eta^{\gamma'}H_\eta(a_\eta^{1-\gamma'}p)$, $f_\eta(x, \mu) = a_\eta^{\gamma'}f(x_\eta + a_\eta x, M_\eta \mu_\eta)$. Note that
 \[a_\eta \to 0 \quad \text{as }  \eta\to0,\] 
and $\widehat H_\eta$ satisfies for some $C_1 > 0$
\begin{equation}\label{Hhatbound}
- a_\eta^{\gamma'} C_1 \le \widehat{H}_\eta(p) \le C_H(|p|^\gamma + a_\eta^{\gamma'}) \le C_1(|p|^2 + 1)
\end{equation}
for all $\eta>0$ and $p\in\RsetN$, by \eqref{Hetasubgamma}. Similarly,
\begin{equation}\label{Hhatbound2}
|\nabla \widehat{H}_\eta(p)| \le C(|p|^{\gamma-1} + 1),
\end{equation}
by \eqref{Heta_p2}.

Moreover, $\mu_{\eta} \le 1$ on $Q_\eta$ and 
\begin{equation}\label{Fbound}
0 \le f_\eta(x, \mu_\eta) \le C_f a_\eta^{\gamma'} (M_\eta^\alpha + 1) \le C_f(a_\eta^2 + a^{\gamma'}_\eta)
\end{equation}
for all $\eta$, by \eqref{fass}. Moreover, since $\gamma' \ge 2$, $\mu_\eta$ and $v_\eta$ are bounded (uniformly in $\eta$) in $W^{2,\infty}(T_\eta)$ at initial and final time respectively.

{\bf 2.} We show that $v_\eta$ and its gradient are bounded on $Q_\eta$. It suffices to observe that
\[
\begin{split}
& \bar{v}_\eta(x,t):= a_\eta^{\gamma'-2}\sup_{x\in\T} u_T(x) + C_1 a_\eta^{\gamma'}\left(\frac{T-t_\eta}{a_\eta^2} - t \right), \\
& \underline{v}_\eta(x,t) := a_\eta^{\gamma'-2}\inf_{x\in\T} u_T (x) - (C_H a_\eta^{\gamma'} + C_f a_\eta^{\gamma'} + C_f a_\eta^2)\left(\frac{T-t_\eta}{a_\eta^2} - t \right)
\end{split}
\]
are respectively supersolutions and subsolutions of the (backward) Cauchy problem for the HJB equation in \eqref{MFGn}. Hence,
\begin{equation}\label{vbound}
-C \le \underline{v}_\eta(x,t) \le v_\eta(x,t) \le \bar{v}_\eta(x,t) \le C \quad \text{on $Q_\eta$}
\end{equation}
by the Comparison Principle. Proposition \ref{hjb_regularity} applies as \eqref{Hhatbound}, \eqref{Fbound}, \eqref{vbound} hold, thus $$\|\nabla v_\eta \|_{L^\infty(Q_\eta)} \le C.$$

{\bf 3.} The rescaled distribution $\mu_\eta$ is a solution of the following linear equation
\[
(\mu_\eta)_t - \Delta \mu_\eta  = {\rm div}(\Phi_\eta(x,t)) \quad \text{in $Q_\eta$},
\]
where $\Phi_\eta(x,t) = \nabla \widehat{H}_\eta(\nabla v_\eta(x,t))  \mu_\eta(x,t)$ is bounded in $L^\infty(Q_\eta)$ uniformly with respect to $\eta$, by the previous step and \eqref{Hhatbound2}. Note that $|\mu_\eta|$ itself is bounded by one. Thus, by classical elliptic regularity (see, for example, \cite[Theorem V.1.1]{lady}), we conclude that $\|\mu_\eta\|_{C^{0,\theta}(Q_\eta)} \le C$ for some $\theta > 0$. It follows that $\mu_\eta$ is bounded away from zero in a neighbourhood of zero, as $\mu_\eta(0,0) = 1$. Therefore, for any fixed $p > 0$, there exists some neighbourhood $U$ of $(x,t)=(0,0)$ and $\delta > 0$, depending on $p$ but not on $\eta$, such that
\begin{equation}\label{belowLalpha}
\int_{U} (\mu_\eta)^p \, dx dt \ge \delta.
\end{equation}
We may choose $p$ so that
\[
\alpha \frac{N+2}{\gamma'-2} < p < \frac{N+2}{N+2-\gamma'},
\]
by the assumptions on $\alpha$. Thus, in view of \eqref{N2norm},
\[
\int_{Q_\eta} (\mu_\eta)^p \, dx dt = \frac{1}{M_\eta^p} \int_{Q_\eta} (m_\eta(x_\eta + a_\eta x, t_\eta + a^2_\eta t))^p \, dx  dt \\ = M^{\alpha \frac{(N+2)}{\gamma'-2} - p} \, \|m_\eta \|_{L^p(Q)}^p \to 0
\]
as $\eta \to 0$, by the assumptions on $\alpha$ and the fact that $M_\eta \rightarrow \infty$, but this contradicts \eqref{belowLalpha}.

\end{proof}

\begin{proof}[Proof of Theorem \ref{ex_clas_2}] Once $L^\infty(Q)$ estimates on $m_\eta$ are in force, we just have to improve the bounds on $(u_\eta, m_\eta)$ in order to pass to (classical) limits in \eqref{MFGeta}. By Proposition \ref{hjb_regularity}, \eqref{Hetasubgamma} and \eqref{inftybound}, $\|u_\eta\|_{C^{1,\theta}(Q)}$ is bounded independently on $\eta$, for some $\theta>0$. Note that such a bound can be extended to $(0,T]$ by the regularity of the final datum $u_T$. Standard elliptic regularity then provides H\"older bounds on the second derivatives of $u_\eta$. The same assertion holds for $m_\eta$, by elliptic regularity applied to the Fokker-Planck equation. Finally, note that also $-w_\eta/m_\eta = \nabla H(\nabla u_\eta)$ is bounded in $L^\infty(Q)$ independently on $\eta$. Therefore the penalisation term $\frac{\eta}{N+3}\int_Q m_\eta |w_\eta/m_\eta|^{N+3}$ in $\Ecal_\eta$ vanishes, implying
\[
\min_{\Kcal} \Ecal_\eta \to\min_{\Kcal} \Ecal \quad \text{as $\eta \to 0$.}
\]

\end{proof}

\begin{rem} In Lemma \ref{lem_infty}, the rescaling is designed so that $(v_\eta, \mu_\eta)$ solves the Hamilton-Jacobi-Bellman equation
\[
- ( v_\eta)_t - \Delta v_\eta + \widehat{H}_\eta(\nabla v_\eta)= - f_\eta(x, \mu_\eta),
\]
where $|f_\eta| \le 1$ on $Q$ for all $\eta$. Then, gradient bounds $\|\nabla v_\eta \|_{L^\infty(Q_\eta)} \le C$ are used to run an argument by contradiction; those bounds are obtained by first estimating $\|v_\eta \|_{L^\infty(Q_\eta)} \le C$ through a comparison principle. Lipschitz estimates then follow.

Note that gradient estimates available in the literature usually depend on bounds on the solution itself. On the other hand, for {\it stationary} HJB equations, namely
\[
 \lambda - \Delta v(x) + H(\nabla v(x)) = F(x),
\]
it is possible to prove Lipschitz estimates that {\it do not} depend a priori on $\|v\|_{L^\infty(Q)}$, for example by means of the Bernstein method (see \cite{LL89}). This key fact has been used in \cite{cir16} to prove existence of classical solutions to \eqref{MFG} in the stationary case, for a wider range of couplings $f$. If similar estimates were available also in the time dependent case, i.e. $\|\nabla v_\eta \|_{L^\infty(Q_\eta)} \le C$, with $C$ depending on $\|f_\eta\|_{L^\infty(Q_\eta)}$, $\|v_\eta(\cdot, T)\|_{L^\infty(Q_\eta)}$ but not on $\|v\|_{L^\infty(Q)}$ and the time horizon $T$, then it would have been possible to have Lemma \ref{lem_infty} in the whole range $\alpha \in (0, \gamma'/N)$, and hence the existence of classical solutions. Note that our $v_\eta$ and $f_\eta$ are space-periodic, but with no control on the period.
\end{rem}

\section{A non-uniqueness example} \label{non-uniqueness}

The aim of this section is to prove that, under additional assumptions, \eqref{MFG} has multiple solutions. Consider $f(x,m) = f(m)$ and $u_T \equiv 0$. Note that the corresponding system
\begin{equation}\label{MFGnonu}
\begin{cases}
-u_t - \Delta u + H(\nabla u) = -f(m(x,t)) & \text{in $Q$,}\\
m_t - \Delta m - \diverg(\nabla H(\nabla u)\, m)  = 0 & \text{in $Q$,} \\
m(x,0) = 1, \quad u(x,T) = 0 & \text{on $\T$}.
\end{cases}
\end{equation}
has always a trivial solution $(\bar{u},\bar{m}) = ((t-T)f(1),1)$. We look for a solution to \eqref{MFGnonu} which is not the trivial one.

We will require that for some $b >2$, $c > 0$,
\begin{equation}\label{nonucond}
\begin{split}
\bullet \quad & 0\leq L(q) \le c |q|^b \quad \text{for all $|q| \le 1$,}\\
\bullet \quad & f'(m) \ge c \quad \text{for all $|m| \le 2$}.
\end{split}
\end{equation}

\begin{prop}\label{nonunique} Under the assumptions of Theorem \ref{ex_clas} and \eqref{nonucond}, there exist at least two (different) classical solutions to \eqref{MFGnonu} if $T$ is large enough.
\end{prop}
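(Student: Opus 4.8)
The plan is to exhibit a competitor whose energy $\Ecal$ is strictly smaller than $\Ecal(\bar u, \bar m)$, so that the minimiser of $\Ecal$ provided by Theorem \ref{ex_clas} (which exists since we are under its hypotheses) cannot coincide with the trivial solution $(\bar u,\bar m)$. Since a classical solution corresponding to the minimiser exists by Theorem \ref{ex_clas}, and the trivial solution is also classical, this yields (at least) two distinct classical solutions. Concretely, I first compute $\Ecal(\bar m,\bar w)$ where $\bar w = -\bar m\nabla H(\nabla\bar u) = -\nabla H(0)$: since $m\equiv 1$ and $u_T\equiv 0$, one gets $\Ecal(\bar m,\bar w) = \int_Q L(-\bar w) - F(x,1)\,dxdt = T\,|\T|\,(L(\nabla H(0)) - F(1))$, a fixed multiple of $T$ (note $\nabla H(0)=0$ is not assumed, but $L(\nabla H(0))$ is just a constant; actually one can also simply note $\bar w\equiv \text{const}$ and the constraint \eqref{kcalconstraint} holds trivially).

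Next I would build the competitor from a solution of the \emph{stationary} focusing problem. By \cite{cir16}, under the standing assumptions and \eqref{nonucond}, when the stationary coupling is strong enough there is a stationary solution $(\hat u,\hat m,\hat\lambda)$ of $-\Delta\hat u + H(\nabla\hat u) + \hat\lambda = -f(\hat m)$, $-\Delta\hat m - \diverg(\nabla H(\nabla\hat u)\hat m)=0$, $\int_\T\hat m=1$, which is \emph{not} the constant $\hat m\equiv 1$ — i.e. the stationary energy of $(\hat m,\hat w)$ with $\hat w=-\hat m\nabla H(\nabla\hat u)$ is strictly below that of the flat state, $\int_\T \hat m L(-\hat w/\hat m) - F(\hat m)\,dx < L(\nabla H(0)) - F(1)$ (this strict inequality is where \eqref{nonucond}, giving strict convexity/genuine curvature of $f$ near $m=1$, is used; it forces the flat state to be a non-minimiser of the stationary energy). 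I then take $m(x,t) := \hat m(x)$, $w(x,t):=\hat w(x)$ for $t\in(0,T)$; the main technical point is that $(m,w)$ must lie in $\Kcal$, i.e. satisfy \eqref{kcalconstraint} with \emph{initial datum} $m_0\equiv 1$, which it does \emph{not} unless $\hat m\equiv 1$. So the competitor must interpolate: on a short initial layer $t\in(0,\varepsilon)$ transport the flat datum $m_0=1$ to $\hat m$ (solve a controlled continuity/Fokker–Planck problem, e.g. $m_t-\Delta m + \diverg(w)=0$ with a bounded chosen $w$, landing at $\hat m$ at time $\varepsilon$ — this is possible since $\hat m>0$ is smooth, so $m$ stays positive and $|w|^{\gamma'}m^{1-\gamma'}\in L^1$), then run the stationary profile on $(\varepsilon,T-\varepsilon)$, and if needed adjust near $t=T$ (no adjustment is needed for $m$ since there is no terminal constraint on $m$, only the term $\int_\T u_T m(\cdot,T) = 0$ because $u_T\equiv 0$). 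The energy contributed by the two thin layers is $O(1)$ (bounded by a constant independent of $T$, using \eqref{Lass}, \eqref{Fass} and the boundedness of the chosen $w$ there), while the bulk contributes $(T-2\varepsilon)\cdot\big(\int_\T \hat m L(-\hat w/\hat m) - F(\hat m)\,dx\big)$.

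Putting these together:
\[
\Ecal(m,w) \le (T-2\varepsilon)\Big(\int_\T \hat m L(-\hat w/\hat m) - F(\hat m)\,dx\Big) + C_0,
\]
with $C_0$ independent of $T$, whereas $\Ecal(\bar m,\bar w) = T\big(L(\nabla H(0)) - F(1)\big)|\T|$. Since the stationary energy density of $\hat m$ is strictly below that of the flat state by a fixed gap $-2\kappa<0$, the difference $\Ecal(m,w) - \Ecal(\bar m,\bar w) \le -2\kappa T + C_1$ for constants $\kappa>0$, $C_1$ independent of $T$; hence for $T$ large, $\Ecal(m,w) < \Ecal(\bar m,\bar w)$. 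Therefore the minimiser $(\tilde m,\tilde w)$ of $\Ecal$ — equivalently the classical solution $(\tilde u,\tilde m)$ of Theorem \ref{ex_clas} — satisfies $\Ecal(\tilde m,\tilde w) \le \Ecal(m,w) < \Ecal(\bar m,\bar w)$, so $(\tilde u,\tilde m)\neq(\bar u,\bar m)$, giving two distinct classical solutions.

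The main obstacle I anticipate is twofold: (i) establishing \emph{strictly} that the flat stationary state is not a stationary energy minimiser — this is exactly what \eqref{nonucond} should buy (positive curvature $f'\ge c>0$ near $m=1$ lets one perturb $\hat m = 1+\delta\varphi$ with $\int\varphi=0$ and see the energy decrease at second order once the concentrating term beats the diffusion/kinetic cost, which in turn uses $L(q)\le c|q|^b$ with $b>2$ so that the kinetic penalty of small-amplitude, slowly-varying perturbations is super-quadratically small — this interplay is delicate and is the heart of the argument); and (ii) constructing the initial interpolation layer inside $\Kcal$ with controlled, $T$-independent energy cost while keeping $m>0$ so that $mL(-w/m)$ is finite — this is routine but must be done carefully (e.g. via the heat flow from $m_0=1$ composed with a suitable reparametrisation, or by directly prescribing a smooth positive path of densities and reading off $w$ from the continuity equation, then bounding $\int_0^\varepsilon\!\int_\T m L(-w/m)$ using \eqref{Lass} and the smoothness/positivity of the path).
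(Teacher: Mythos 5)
Your overall strategy is the paper's: show $\min_{\Kcal}\Ecal < \Ecal(1,0) = -TF(1)$ by exhibiting a competitor that runs a ``stationary'' profile with strictly lower energy density on the bulk time interval, with $O(1)$ transition layers, then conclude via Theorem~\ref{ex_clas}. However, your main construction takes a detour that is both unnecessary and incompletely justified. You propose to pull a non-trivial stationary solution $(\hat u,\hat m)$ from \cite{cir16} and claim its stationary energy lies strictly below that of the flat state. Two gaps: (a) the existence results in \cite{cir16} carry their own hypotheses on $\alpha,\gamma',N$ that are not part of the present Proposition's assumptions (here $\gamma' > N+2$ and \eqref{nonucond} only), so the existence of such a stationary solution is not in force; (b) even if one exists, nothing guarantees its stationary energy is strictly below that of the flat state — being a critical point, or even a solution constructed by fixed-point methods, says nothing about the energy ordering. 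You do acknowledge (b) as your ``obstacle (i)'' and sketch a perturbative argument $\hat m = 1 + \delta\varphi$ to establish the strict inequality; but once that perturbative competitor is in hand, the detour through \cite{cir16} is redundant.

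The paper proceeds directly with the object you reinvent in the obstacles paragraph: take $\mu(x) := 1 + \epsilon\varphi(x)$ with $\int_\T\varphi = 0$ and $v := \nabla\mu$ (so the stationary constraint holds trivially since $\diverg(v-\nabla\mu)=0$), and estimate
\[
\int_\T \mu L\!\left(-\tfrac{v}{\mu}\right) - F(\mu)\,dx \le -F(1) + c\,\epsilon^b\!\int_\T (1+\epsilon\varphi)^{1-b}|\nabla\varphi|^b\,dx - c\,\epsilon^2\!\int_\T\varphi^2\,dx,
\]
using $L(q)\le c|q|^b$ for $|q|\le 1$ (kinetic cost $O(\epsilon^b)$, $b>2$) against $f'\ge c$ near $1$ (potential gain $O(\epsilon^2)$). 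Then the time-dependent competitor is the explicit interpolation $m = 1 + \zeta(t)(\mu-1)$, $w = \zeta v - \zeta'\nabla\psi$ with $\Delta\psi=\mu-1$, which lies in $\Kcal$ by construction and has transition cost independent of $T$. This is more elementary, self-contained, and avoids invoking any existence theory for stationary MFG. One small correction to your first paragraph: under \eqref{Hass} and \eqref{nonucond} one has $\inf H = H(0) = 0$ and hence $\nabla H(0) = 0$ and $L(0) = 0$, so $\bar w = 0$ is forced (not merely ``a constant''), which is why $\Ecal(\bar m,\bar w) = -TF(1)$ exactly.
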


\begin{proof}
In view of Theorem \ref{ex_clas}, there exists a classical solution $(\tilde{u}, \tilde{m})$ of \eqref{MFG} such that the couple $(\tilde{m}, \tilde{w})=(\tilde{m} , -\tilde{m}\nabla H(\nabla \tilde{u}))$ is a minimiser of $\Ecal$. Our aim is to show that $(\tilde{u}, \tilde{m})$ cannot be the trivial solution; this will be achieved by showing that $\Ecal(\tilde{m} , -\tilde{m}\nabla H(\nabla \tilde{u})) = \min_\Kcal \Ecal(m, w)< \Ecal(1, 0) = - T F(1)$.

In order to build a suitable competitor $(m,w)$, let us consider $\mu(x) := 1 + \epsilon \varphi(x)$, where $\epsilon>0$, $\varphi \in C^2(\T)$ is fixed and satisfies $\int_\T \varphi dx = 0$. Set $v(x) := \nabla \mu(x) = \epsilon \nabla  \varphi(x)$. Recalling that $F''(m) = f'(m)$, by \eqref{nonucond} we have
\[
\int_\T \mu L\left(-\frac{v}{\mu}\right) - F(\mu)\, dx \le -F(1) + c \epsilon^b \int_\T (1+ \epsilon \varphi)^{1-b}|\nabla \varphi|^b\, dx - c \epsilon^2  \int_\T \varphi^2 \, dx
\]
if $\epsilon > 0$ is small. By reducing $\epsilon > 0$, there exists $\delta > 0$ so that
\begin{equation}\label{nonu1}
\int_\T \mu L\left(-\frac{v}{\mu}\right) - F(\mu)\, dx \le -F(1) - \delta.
\end{equation}

The couple $(\mu, v)$ will serve as a competitor in the ``long-time'' regime, that is in the time interval $[1, T]$. In order to connect the initial datum $m_0 \equiv 1$ to $\mu$ at $t=1$, we define
\[
m(x,t):=1 + \zeta(t) (\mu(x)-1), \quad w(x,t) = \zeta(t) v(x) - \zeta'(t) \nabla \psi(x) \quad \text{on $Q$},
\]
where $\zeta : [0,T] \to \Rset$ is a smooth function which is zero at $t=0$, equal to one in the interval $[1, T]$, and $\psi \in C^2 (\T)$ is a solution of $\Delta \psi = \mu - 1$ on $\T$. One easily verifies that $(u, m ) \in \Kcal$. Moreover,
\begin{align}
\Ecal(m, w) = &\int_0^1 \int_\T (1 + \zeta(t) (\mu(x)-1)) L\left(-\frac{\zeta(t) v(x) - \zeta'(t) \nabla \psi(x)}{1 + \zeta(t) (\mu(x)-1)}\right) - F(1 + \zeta(t) (\mu(x)-1)) \, dx dt \nonumber\\ &+ \int_1^T \int_\T \mu L\left(-\frac{v}{\mu}\right) - F(\mu) \, dx dt.
\end{align}
The first integral in the previous equation is finite (as the integrand is in $L^\infty(\T)$), and can be bounded by a positive constant $C$ not depending on $T$. As for the second integral, we use \eqref{nonu1} to obtain
\[
\Ecal(m, w) \le C - (T-1)(F(1) + \delta) < - T F(1) = \Ecal(1, 0)
\]
if $T$ is large enough. Hence, the minimum of $\Ecal$ is not achieved by $(1, 0)$, and $(\tilde{u}, \tilde{m})$ cannot be the trivial solution.

\end{proof}

\begin{rem} The conclusion of Proposition \ref{nonunique} holds if one replaces the assumptions of Theorem \ref{ex_clas} with the assumptions of Theorem \ref{ex_clas_2} or Theorem \ref{ex_weak}. In the latter case, the non-trivial solution minimising the energy $\Ecal$ has to be intended in the weak sense.

Condition \eqref{nonucond} can be also weakened: the construction of a minimum of  $\Ecal$ that cannot be the trivial solution basically relies on the existence of a couple $(v, \mu)$ satisfying the constraint
\[
\Delta \mu + {\rm div}(v) = 0 \text{ on $\T$,} \quad \int_\T \mu \,dx = 1, \quad \mu \ge 0,
\]
such that
\[
{\Ecal}_S(\mu, v) := \int_\T \mu L\left(-\frac{v}{\mu}\right) - F(\mu)\, dx < L(0)-F(1).
\]
\end{rem}
In other words, this can be seen as requiring that the energy functional ${\Ecal}_S$ associated to the {\it stationary} version of \eqref{MFGnonu} is not minimised by the couple $(1,0)$. 

\appendix

\section{Existence for small $T$} \label{appendix}

To prove existence in the small time-horizon regime, we implement a standard contraction mapping principle (see, for example, \cite[Chapter 15]{taylor}). This tool has already been used in \cite{Ambrose} in the MFG framework to prove existence of solutions for \eqref{MFG}, but with a different spirit: existence for arbitrarily large $T$ but initial data close to $\bar{m} \equiv 1$ (in the mentioned work, the functional space setting is indeed different). 

Let us rewrite \eqref{MFG} in integral form; set $v(\cdot, t) := u(\cdot, T-t)$ for all $t \in [0,T]$, then
\begin{equation}\label{FFI}
\begin{cases}
v(x,t) = e^{t \Delta} u_T(x) - \int_0^t e^{(t-s)\Delta} \Phi^v[v,m](s)(x) ds, \\
m(x,t) = e^{t \Delta} m_0(x) + \int_0^t e^{(t-s)\Delta} \Phi^m[v,m](s)(x) ds,
\end{cases}
\end{equation}
where
\[
\begin{split}
& \Phi^v[v,m](s)(\cdot) := f(m(\cdot, T-s)) - H(\nabla v(\cdot, s)), \\ & \Phi^m[v,m](s)(\cdot): = {\rm div}(\nabla H (\nabla v (\cdot, T-s)) m(s)) \quad \forall s \in [0,T]
\end{split}
\]
and $ e^{t \Delta}$ is the (strongly continuous) semigroup associated with the parabolic equation $\varphi_t = \Delta \varphi$ and defined on suitable H\"older spaces (see {\it iii)} below). Note that \eqref{FFI} has the form of a forward-forward system for $v,m$. Here, the {\it local} regularity of $H,f$ plays a role, rather than the time direction in the two equations or the behaviour of $f$ at infinity: we just assume $f, H \in C^3$ and do not require \eqref{Hass} and \eqref{fass} to hold. We stress that this argument could be adapted to more general MFG systems (congestion problems, $u_T$ depending on $m$, ...).

Let us define $X^{k,\nu} := C([0,T], C^{k,\nu}(\T))$ for all integers $k$ and $\nu \in (0,1)$. We will need the following facts: let $0 < \nu < \beta < 1$, then
\begin{itemize}
\item[{\it i)}] If $h \in C^2(\RsetN)$ and $\|g_1\|_{C^{1,\nu}(\T)}, \|g_2\|_{C^{1,\nu}(\T)} \le K$, then $\|h(g_1)-h(g_2)\|_{C^{1,\nu}(\T)} \le C\|g_1-g_2\|_{C^{1,\nu}(\T)}$, for some $C = C(h,K,\nu) > 0$. 
\item[{\it ii)}] $\|g_1 g_2\|_{C^{\nu}(\T)} \le C \|g_1\|_{C^{\nu}(\T)} \| g_2\|_{C^{\beta}(\T)}$ for all $g_1 \in C^{\nu}(\T)$,  $g_2 \in C^{\beta}(\T)$.
\item[{\it iii)}] $\| e^{t \Delta} u \|_{C^{2,\nu}(\T)} \le C t^{-\frac{2+\nu-\beta}{2}}\|u\|_{C^{0,\beta}(\T)}$, $\| e^{t \Delta} u \|_{C^{2,\beta}(\T)} \le C t^{-\frac{1}{2}}\|u\|_{C^{1,\beta}(\T)}$ for all $t \in (0,1]$.
\end{itemize}
Items {\it i)} and {\it ii)} follows by computation; as for {\it iii)} see, for example, \cite{taylor}, p 274. Recall also that $C^{2,\nu}(\T)$ is continuously embedded into $C^{1,\beta}(\T)$.

Note that the contraction mapping principle implies also uniqueness of solutions in the space $\mathcal{Z}_a$ (see \eqref{Za} below), that is, there is only one equilibrium $(u,m)$ close to the final-initial data $(u_0, m_0)$.

\begin{proof}[Proof of Theorem \ref{short_ex}] Fix any $0 < \nu < \beta < 1$. Then, $\Phi^v : X^{2,\beta} \times X^{2,\nu} \to X^{1,\beta}$ and $\Phi^m : X^{2,\beta} \times X^{2,\nu} \to X^{0,\beta}$. Let $ a > 0$ and
\begin{multline}\label{Za}
\mathcal{Z}_a := \{ (v,m) \in X^{2,\beta} \times X^{2,\nu} : v(0) = u_T, \, m(0) = m_0, \\ \|v(t) - u_T\|_{C^{2,\beta}(\T)} \le a, \, \|m(t) - m_0\|_{C^{2,\nu}(\T)} \le a \text{ for all $t \in [0,T]$}\}.
\end{multline}
Let $(v,m) \mapsto (\hat{v}, \hat{m}) := \Psi(v,m)$, where
\[
\begin{cases}
 \hat{v}(t) = e^{t \Delta} u_T - \int_0^t e^{(t-s)\Delta} \Phi^v[v,m](s) ds, \\
 \hat{m}(t) = e^{t \Delta} m_0 + \int_0^t e^{(t-s)\Delta} \Phi^m[v,m](s) ds.
\end{cases}
\]
Our aim is to prove that $\Psi$ has a fixed point, by means of the contraction mapping theorem. First, we claim that $\Psi$ maps $\mathcal{Z}_a$ into itself when $T = T(a)$ is small. Indeed,
\[
\|e^{t \Delta} u_T - u_T \|_{C^{2,\beta}(\T)} \le a/2, \quad \|e^{t \Delta} m_0 - m_0\|_{C^{2,\nu}(\T)} \le a/2
\]
if $t$ is small, by continuity of the semigroup $e^{t \Delta}$. Moreover,
\begin{align*}
\left\lVert \int_0^t e^{(t-s)\Delta} \Phi^v[v,m](s) ds \right\rVert_{C^{2,\beta}(\T)} &\le \int_0^t \| e^{(t-s)\Delta} \Phi^v[v,m](s) \|_{C^{2,\beta}(\T)} ds \\ &\le  C \int_0^t (t-s)^{-1/2}\| \Phi^v[v,m](s) \|_{C^{1,\beta}(\T)} ds \le C_1 T^{1/2} \le a/2
\end{align*}
whenever $T$ is small (here, $C$, $C_1$, ..., are positive constants depending on $a$, but not on $T$). Similarly,
\[
\left\lVert \int_0^t e^{(t-s)\Delta} \Phi^m[v,m](s) ds \right\rVert_{C^{2,\nu}(\T)}  \le C T^{(\beta- \nu)/2} \le a/2,
\]
so $\Psi : \mathcal{Z}_a \to \mathcal{Z}_a$.

To show that $\Psi$ is a contraction, note that for all $s \in [0,T]$,
\begin{align*}
&\| \Phi^m[v_1,m_1](s) - \Phi^m[v_2,m_2](s) \|_{C^{0,\beta}(\T)} \le \\& \le \| \nabla H(\nabla v_1 (T-s)) (m_1- m_2)  \|_{C^{1,\beta}(\T)} + \| m_2 \,[ \nabla H(\nabla v_1 (T-s)) -  \nabla H(\nabla v_2 (T-s))]  \|_{C^{1,\beta}(\T)}  \\
&\le C  \| \nabla H(\nabla v_1 (T-s))\|_{C^{1,\beta}(\T)} \|m_1 - m_2  \|_{C^{1,\beta}(\T)} + \| m_2 \|_{C^{1,\beta}(\T)} \| (\nabla v_1 -  \nabla v_2) (T-s) \|_{C^{1,\beta}(\T)} \\ 
& \le C_1( \|m_1 - m_2  \|_{C^{2,\nu}(\T)} + \| v_1 (T-s)-  v_2 (T-s) \|_{C^{2,\beta}(\T)}).
\end{align*}
Therefore,
\begin{align*}
&\left\lVert \int_0^t e^{(t-s)\Delta}\left( \Phi^m[v_1,m_1](s) - \Phi^m[v_2, m_2](s)\right)ds \right\rVert_{C^{2,\nu}(\T)} \le\\ &\le C \int_0^t (t-s)^{-\frac{2+\nu-\beta}{2}} ( \|m_1 - m_2  \|_{C^{2,\nu}(\T)} + \| v_1 (T-s)-  v_2 (T-s) \|_{C^{2,\beta}(\T)}) ds
 \\ &\le C T^{(\beta- \nu)/2} \sup_{s \in [0,T]}( \|m_1 - m_2  \|_{C^{2,\nu}(\T)} + \| v_1 (T-s)-  v_2 (T-s) \|_{C^{2,\beta}(\T)})\\  &\le  \|m_1 - m_2  \|_{X^{2,\nu}} + \| v_1 -  v_2 \|_{X^{2,\beta}} .
\end {align*}
by eventually reducing $T$. In a similar way, one shows that
\[
\left\rVert\int_0^t e^{(t-s)\Delta} \Phi^v[v_1,m_1](s) - \Phi^v[v_2, m_2](s)ds \right\rVert_{C^{2,\beta}(\T)} \le \|m_1 - m_2  \|_{X^{2,\nu}} + \| v_1 -  v_2 \|_{X^{2,\beta}},
\]
hence $\Psi$ is a contraction; its fixed point in $\mathcal{Z}_a$ is a solution to \eqref{FFI}, and hence a classical solution to \eqref{MFG} by classical Schauder regularity results.

\end{proof}

\small

\medskip
\begin{flushright}
\noindent \verb"cirant@math.unipd.it"\\
Dipartimento di Matematica ``Tullio Levi-Civita''\\ Universit\`a di Padova\\
via Trieste 63, 35121 Padova (Italy)

\smallskip

\noindent \verb"tonon@ceremade.dauphine.fr"\\
 Universit\'e Paris-Dauphine\\
 PSL Research University\\
 CNRS, Ceremade\\
Place du Mar\'echal de Lattre de Tassigny\\
75775 Paris cedex 16 (France)
\end{flushright}

\end{document}